\colorlet{texcscolor}{blue!50!black}
\colorlet{texemcolor}{red!70!black}
\colorlet{texpreamble}{red!70!black}
\colorlet{codebackground}{black!25!white!25}
\patchcmd\newpage{\vfil}{}{}{}
\numberwithin{equation}{section}
\newtheorem{rem}{Remark}
\renewcommand{\Im}{{\mbox{Im}}}
\renewcommand{\Re}{{\mbox{Re}}}
\newcommand{\norm}[1]{\left\Vert#1\right\Vert}
\newcommand{\abs}[1]{\left\vert#1\right\vert}
\title{A mathematical theory for Fano resonance in a periodic array of narrow slits
  \thanks{J. Lin was partially supported by the NSF grant DMS-1417676,
and H. Zhang was supported by HK RGC grant GRF 16304517 and GRF 16306318.}}
\author{Junshan Lin
  \thanks{Department of Mathematics and Statistics, Auburn University, Auburn, AL 36849  (\tt jzl0097@
auburn.edu ).}%
  \and
  Stephen P. Shipman
  \thanks{Department of Mathematics, Louisiana State University, Baton Rouge, LA 70803 (\tt shipman@lsu.edu).}
  \and
 Hai Zhang%
  \thanks{Department of Mathematics, 
 HKUST,  Clear Water Bay, Kowloon, Hong Kong
    (\tt haizhang@ust.hk).}
}
\begin{document}
\maketitle

\begin{abstract}
This work concerns resonant scattering by a perfectly conducting slab with periodically arranged subwavelength slits, with two slits per period.  There are two classes of resonances, corresponding to poles of a scattering problem.  A sequence of resonances has an imaginary part that is nonzero and on the order of the width $\varepsilon$ of the slits; these are associated with Fabry-Perot resonance, with field enhancement of order $1/\varepsilon$ in the slits.  The focus of this study is another class of resonances which become real valued at normal incidence, when the Bloch wavenumber $\kappa$ is zero.  These are embedded eigenvalues of the scattering operator restricted to a period cell, and the associated eigenfunctions extend to surface waves of the slab that lie within the radiation continuum.  When $0<|\kappa|\ll 1$, the real embedded eigenvalues will be perturbed as
complex-valued resonances, which induce the Fano resonance phenomenon. 
We derive the asymptotic expansions of embedded eigenvalues and their perturbations as resonances
when the Bloch wavenumber becomes nonzero. Based on the quantitative analysis of the diffracted field, 
we prove that the Fano-type anomalies occurs for the transmission of energy through the slab, and show that the
field enhancement is of order $1/(\kappa\varepsilon)$, which is stronger than Fabry-Perot resonance.
\end{abstract}

\begin{keywords}
 Embedded eigenvalues, Fano resonance, electromagnetic field enhancement, subwavelength structure,  Helmholtz equation.
\end{keywords}

\begin{AMS}
 35C20,  35Q60, 35P30.
\end{AMS}


\setcounter{equation}{0}
\setlength{\arraycolsep}{0.25em}
\section{Introduction}\label{sec:introduction}
Fano resonance, which was initially recognized in the study of the autoionizing states
of atoms in quantum mechanics \cite{fano},  is a type of resonant scattering that gives rise to asymmetric spectral line shapes. 
Fano resonance has been extensively explored more recently
in photonics due to its unique resonant feature of a sharp transition from peak to dip in the transmission signal,
which leads to the design of efficient optical switching devices and photonic devices with high quality factors. We refer the readers to \cite{hsu1, limonov, lukyanchuk} and the references therein for detailed discussions. Mathematically, Fano resonance can be attributed to the perturbation of certain eigenvalues embedded in the continuous (radiation)
spectrum of the underlying differential operators and the corresponding bound states in the continuum (sometimes called BIC)~\cite{hsu2,hsu3,shipman05,lu2}. 
The existence of embedded eigenvalues in photonic slab structures using symmetry
is rigorously established in \cite{bonnet_starling94}. It is known that under smaller perturbation which destroys the symmetry, the embedded eigenmode disappears as its frequency becomes a complex resonance pole, and the transmission coefficient across the slab exhibits a sharp asymmetric shape \cite{shipman05, shipman10}. 
Guided-mode theory~\cite{fan}, analytic perturbation theory~\cite{shipman05,shipman13}, and an augmented scattering matrix method~\cite{nazarov18} have been used to study the Fano resonant transmission for several configurations when the Bloch wave vector is perturbed and the bound states associated with the embedded eigenvalues become quasi-modes.

For photonic structures, the quantitative studies of embedded eigenvalues and Fano resonance mostly rely on numerical approaches. 
In this paper, we derive explicitly the embedded eigenvalues and provide quantitative analysis of their perturbations as resonances in the context of periodic metallic structures with small holes. Based on these explicit expressions, we are able to obtain the transmission and reflection for the scattering of the metallic structure,
which allow us to prove the appearance of Fano-type resonance phenomenon rigorously.
The study in this paper is also in line with our recent attempt to understand the so-called extraordinary optical transmission (EOT) in
subwavelength hole structures \cite{ebbesen98, garcia10}.
We also  refer to~\cite{eric10-2, eric10} for resonant scattering by closely related cavity structures in perfect conductors.

\begin{figure}
\begin{center}
\includegraphics[height=4.5cm]{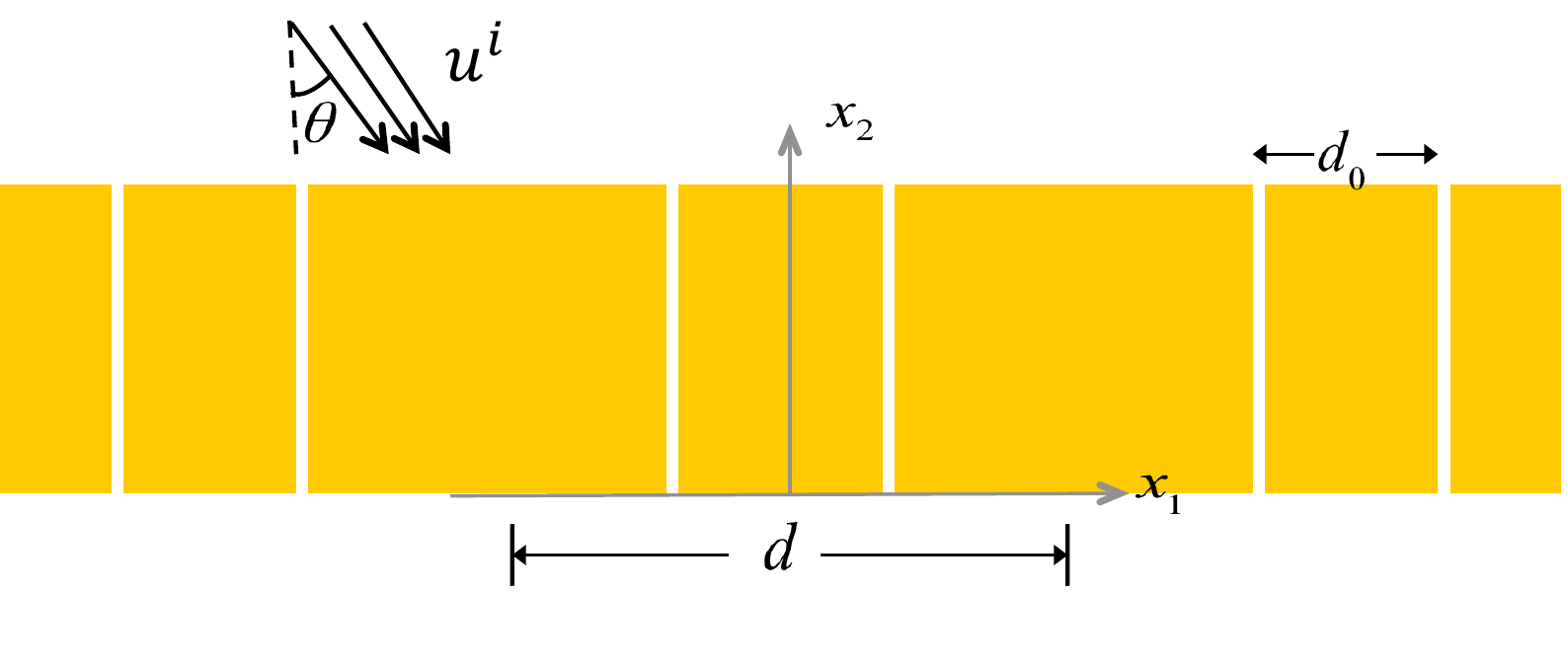}
\vspace*{-20pt}
\caption{Geometry of the periodic slit scattering problem on the $x_1x_2$ plane. Each period consists of two subwavelength slits $S_\varepsilon^{0,-}$ and $S_\varepsilon^{0,+}$, which
 have a rectangular shape of length $1$ and width $\varepsilon$.  The upper and lower aperture of the slit $S_{\varepsilon}^{0,\pm}$ is denoted as $\Gamma^{\pm}_{1,\varepsilon}$ and $\Gamma^{\pm}_{2,\varepsilon}$, respectively. The domain exterior to the perfect conductor is denoted as $\Omega_{\varepsilon}$, which consists of the slit region $S_\varepsilon$, the domains above the slab $\Omega_1$, and the domain below the slab $\Omega_2$.
 }\label{fig:prob_geo}
\end{center}
\end{figure}

\begin{figure}[!htbp]
\begin{center}
\includegraphics[height=3.5cm]{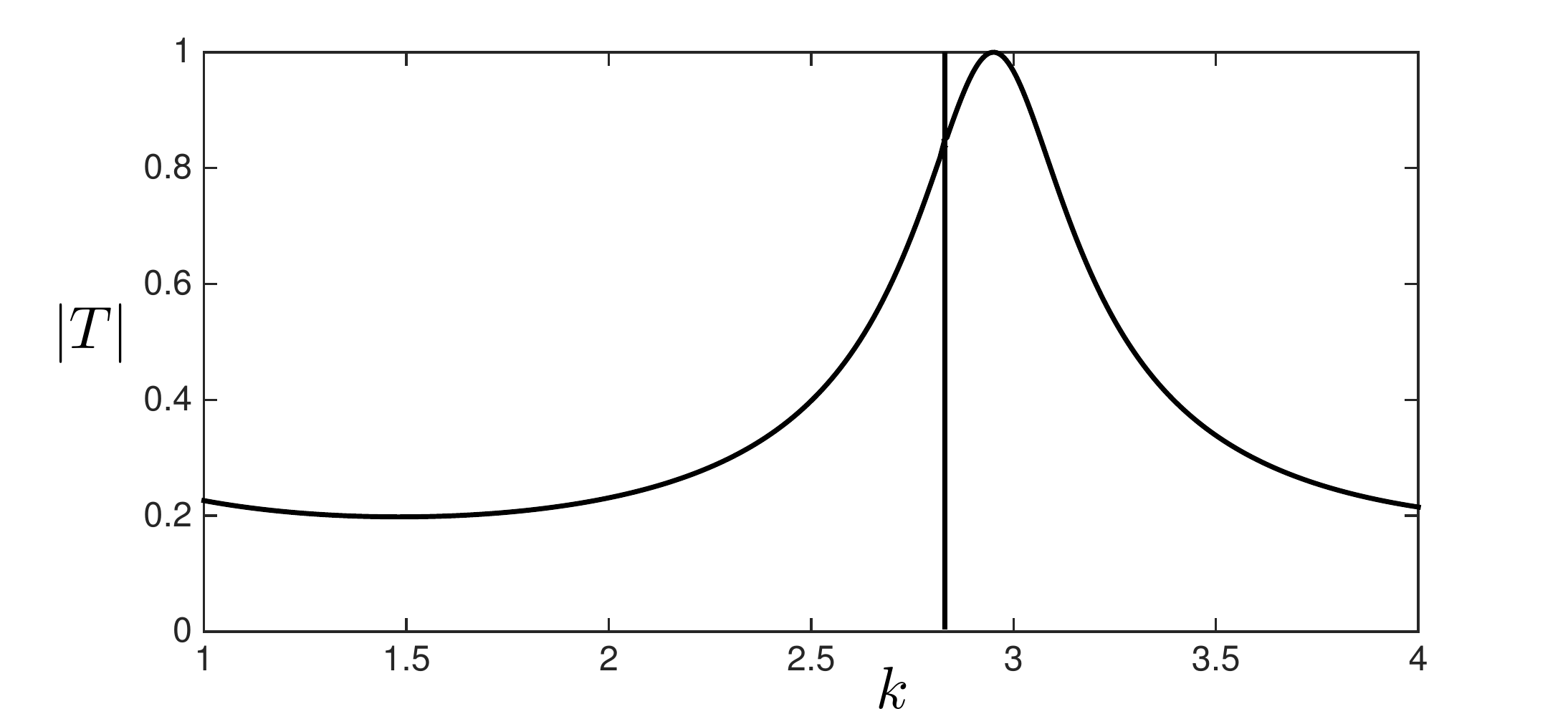}
\includegraphics[height=3.5cm, width=4cm]{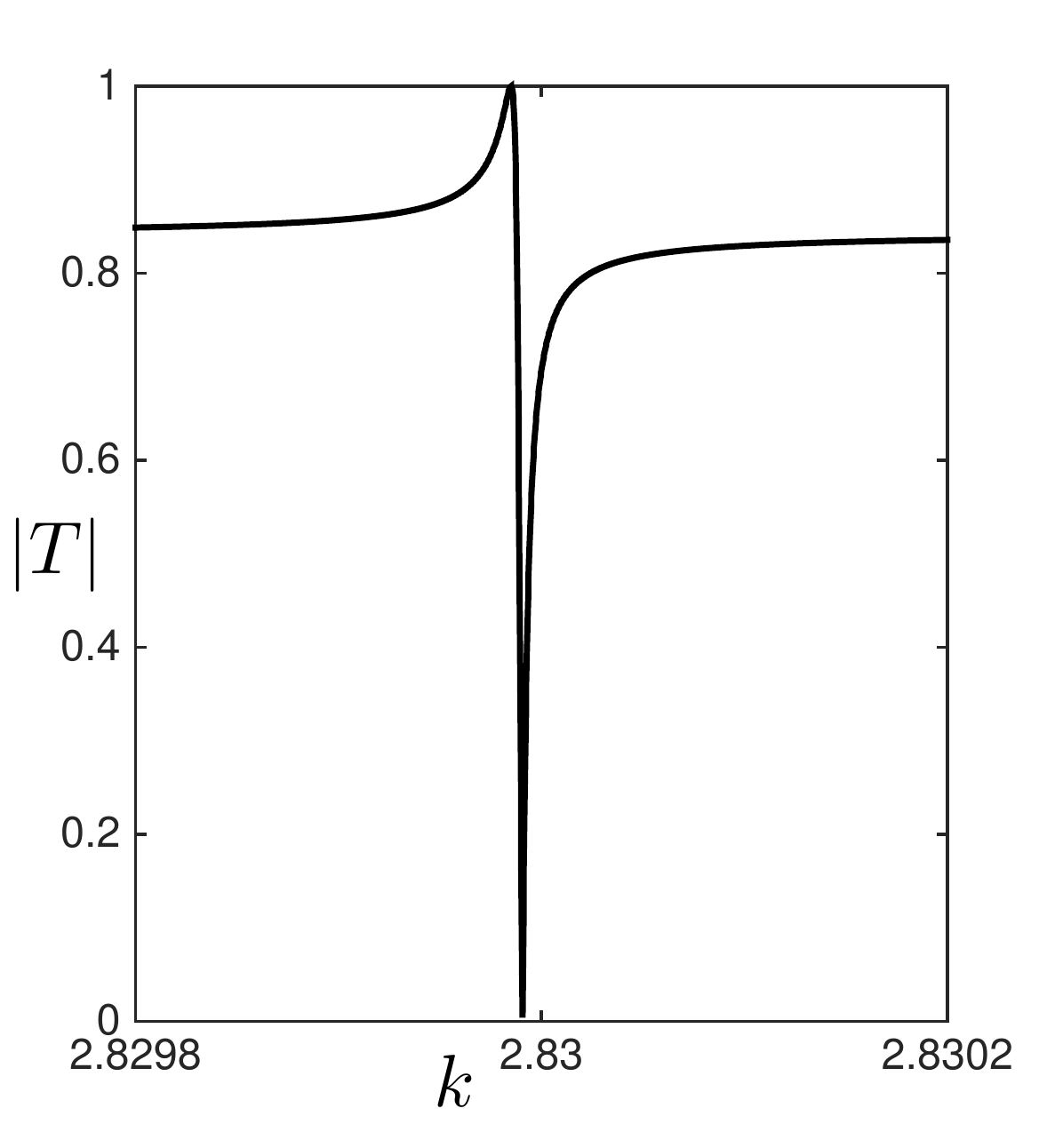}
\caption{Left: Transmission $|T|$ when $d=1$, $d_0=0.4$, $\varepsilon=0.05$, $\kappa=0.1$. An asymmetric line shape occurs near $k=2.83$. Right: Zoomed view of Fano resonance.
}\label{fig:transmission}
\end{center}
\end{figure}

In a series of studies \cite{lin15}--\cite{lin_zhang18_2}, we have established rigorous mathematical theories for the  EOT  and field enhancement in narrow slit structures perforated in a perfectly conducting metallic slab. Both a single slit structure and a periodic array of slit structures in various scaling regimes were considered.
The structures in the present work exhibit an infinite set of resonances similar to those, in which the imaginary part of the complex frequency is nonzero, resulting in Fabry-Perot type resonance.  But they also exhibit a finite set of real resonances, which are the eigenvalues mentioned above.  They occur when the Bloch wavenumber $\kappa$ is zero, and they move away from the real axis when $\kappa$ becomes nonzero.  We show that the field enhancement associated with these resonances is stronger than for the resonant frequencies that remain non-real.  This resonance is known as Fano resonance, and it is associated with sharp anomalies in the transmission of energy across the slab.  For the purely Fabry-Perot resonance, the amplification is on the order inversely proportional to the width $\varepsilon$ of the slits, uniformly in $\kappa$, whereas for the perturbed eigenvalues, the resonance is on the order of $1/(\kappa\varepsilon)$.

To be more specific, we investigate scattering by a metallic grating structure as shown in Figure~\ref{fig:prob_geo}, where each period consists of two identical narrow slits.
The symmetry effects a decoupling of even and odd modes about the centerline between the two slits when $\kappa\!=\!0$. 
An odd surface mode of the grating can exist at a frequency 
within the even continuum, and the frequency of this mode is the embedded eigenvalue
\cite{bonnet_starling94,shipman07,shipman10}.
It is not possible when there is only one slit per period because the slit can not support odd modes when it is too narrow.
If $\kappa$ is perturbed away from zero, the transmission coefficient across the slab exhibits an asymmetric shape as shown in Figure~\ref{fig:transmission}, with peak and dip at very close frequencies.

The perfectly conducting metallic slab occupies the domain $\{ x=(x_1,x_2); \, 0<x_2<1\}$ in the $x_1x_2$ plane.
The domain above and below the metallic slab are denoted by $\Omega_1$ and $\Omega_2$ respectively.
The slits, which are perforated in the metallic slab and invariant along the $x_3$ direction, 
occupy the region $\displaystyle{S_\varepsilon=\bigcup_{n=0}^{\infty} (S_\varepsilon^{(0)} + nd)}$, where $d$ is the size of the period and
$S_\varepsilon^{(0)}$ consists of two subwavelength slits $S_\varepsilon^{0,-}$ and $S_\varepsilon^{0,+}$.
The slits  $S_\varepsilon^{0,\pm}$ are given by
$$\textstyle S_\varepsilon^{0,\pm}:=\left\{(x_1,x_2)\;|\; \pm \frac{d_0}{2}-\frac{\varepsilon}{2} <x_1< \pm \frac{d_0}{2} + \frac{\varepsilon}{2},\; 0<x_2<1 \right\}, $$
where $d_0=O(d)$. Denote the upper and lower apertures of the slit $S_{\varepsilon}^{0,\pm}$ by $\Gamma^{\pm}_{1,\varepsilon}$ and $\Gamma^{\pm}_{2,\varepsilon}$ and exterior domain of the metallic structure by $\Omega_\varepsilon$.

We study the time-harmonic transverse magnetic situation, where the magnetic field is perpendicular to the $x_1x_2$ plane. The $x_3$ component of the incident field is
\begin{equation}
  u^\mathrm{inc}(x) = e^{i k( x_1\sin\theta \,-\, (x_2-1)\cos \theta )} = e^{i\zeta_0}e^{i\kappa x_1-i\zeta_0 x_2},
\end{equation}
in which $k$ is the free-space wavenumber, $\theta\in (-\pi/2,\pi/2)$ is the angle of incidence, $\kappa= k \sin \theta$ is the Bloch wavenumber, and $\zeta_0=\sqrt{k^2-\kappa^2}>0$.  The total field $u_\varepsilon(x)$ satisfies the following scattering problem:
\begin{eqnarray}\label{scatteringproblem}
  && \Delta u_\varepsilon + k^2 u_\varepsilon \;=\; 0 \quad \mbox{in} \; \Omega_\varepsilon, \label{eq:Helmholtz} \\
  && \frac{\partial u_{\varepsilon}}{\partial \nu} = 0  \quad \mbox{on} \; \partial \Omega_{\varepsilon}, \label{eq:Neumann}\\
  && u_\varepsilon(x_1+d,\,x_2) \;=\; e^{i\kappa d}u_\varepsilon(x_1,x_2), \label{eq:quasi-periodic}\\
  && u_\varepsilon(x_1,x_2) \;=\; u^\mathrm{inc}(x_1,x_2) + \sum_{n=-\infty}^{\infty} u_{n,1}^{s} e^{i \kappa_n x_1 + i\zeta_n  x_2 } \quad \mbox{in} \; \Omega_1, \label{eq:field1}\\
  && u_\varepsilon(x_1,x_2) \;=\; \sum_{n=-\infty}^{\infty} u_{n,2}^{s} e^{i \kappa_n x_1 - i\zeta_n  x_2 } \quad \mbox{in} \; \Omega_2. \label{eq:field2}
\end{eqnarray}
Equation (\ref{eq:Helmholtz}) is the Helmholtz partial differential equation, with $\Delta=\partial^2/\partial x_1^2 + \partial^2/\partial x_2^2$.
Equation (\ref{eq:Neumann}) is the Neumann boundary condition, with $\nu$ the unit normal vector pointing to $\Omega_\varepsilon$.
Equation (\ref{eq:quasi-periodic}) expresses the quasi-periodic property, which is consistent with the incident field.
The series in (\ref{eq:field1}) and (\ref{eq:field2}) are the Rayleigh-Bloch (Fourier) expansions for outgoing, or radiating, fields (cf.~\cite{bao95, bonnet_starling94, shipman10}), in which the coefficients $u_{n,i}^s$ are complex amplitudes.  
The constants $\kappa_n$ and $\zeta_n$ are defined by
\begin{equation}
  \kappa_n=\kappa+\textstyle\frac{2\pi n}{d} \quad \mbox{and} \quad
\zeta_n = \zeta_n(k,\kappa)= \sqrt{k^2-\kappa_n^2}\,,
\end{equation}
where the domain of the analytic square root function is taken to be $\mathbf{C}\backslash\{-it: t\geq 0\}$, with $\sqrt{1}=1$.  With this choice of square root,
\begin{equation}
 \zeta_n(k,\kappa)   = \left\{
\begin{array}{lll}
\vspace*{5pt}
\sqrt{k^2-\kappa_n^2}\,,  & \mbox{if}\;\abs{\kappa_n} \leq k, \\
i\sqrt{\kappa_n^2-k^2}\,,  & \mbox{if}\;\abs{\kappa_n} \geq k. \\
\end{array}
\right. 
\end{equation}
The Rayleigh modes with $\abs{\kappa_n} < k$ are propagating, and the modes with $\abs{\kappa_n} > k$ are evanescent.  The case of $\zeta_n=0$ is delicate, but we won't be concerned with it since we are interested in the regime in which $\zeta_n$ is real for $n=0$ and nonzero imaginary for $n\not=0$ (cf.~\eqref{diamond}).

By applying layer potential techniques and asymptotic analysis, we aim to
\begin{itemize}
\item [(i)] provide quantitative analysis of the embedded eigenvalues for the homogeneous scattering problem \eqref{eq:Helmholtz}--\eqref{eq:field2} when $\kappa=0$, and 
 their perturbations as resonances when $\kappa\neq0$ (Theorems \ref{thm:asym_res_eig} and  \ref{thm:asym_eig_perturb});
\item [(ii)] give a rigorous proof of Fano resonant transmission anomalies as shown in Figure~\ref{fig:transmission} for the periodic structure  (Theorem~\ref{thm:Fano}).
\item [(iii)] characterize the field amplification at Fano resonance (Theorem~\ref{thm:u_slit_res}).
\end{itemize}
The paper is organized as follows. We present an integral equation formulation for the scattering problem \eqref{eq:Helmholtz}--\eqref{eq:field2} in Section~\ref{sec:bie}.
In Section~\ref{sec:bie_asymptotic_analysis}, we derive the asymptotic expansions of the integral operators.
The asymptotic analysis of the embedded eigenvalues when $\kappa=0$ and their perturbations when $\kappa\neq0$ is given Section~\ref{sec:eig_res}.
The Fano resonance and the corresponding field enhancement is analyzed in Section~\ref{sec:fano_field_enhancement}.

\setcounter{equation}{0}
\section{Boundary-integral formulation}\label{sec:bie}

The scattering problem \eqref{eq:Helmholtz}--\eqref{eq:field2} can be formulated equivalently as a system of boundary-integral equations.  The development in this section is standard.

Due to the quasi-periodicity of the solution, one can restrict the Bloch wave number $\kappa$ to the first Brillouin zone $\kappa\in(-\pi/d,\pi/d]$.  Note that our incident field is the propagating harmonic for $n\!=\!0$.
For each fixed $\kappa\in(-\pi/d,\pi/d]$, let $g(x, y)= g(k,\kappa; x,y)$ be the quasi-periodic Green function, which satisfies the equation
$$ \Delta g(x,y) + k^2 g(x,y)=e^{i\kappa(x_1-y_1)} \sum_{n=-\infty}^{\infty} \delta(x_1-y_1-nd)\delta(x_2-y_2),$$
with $x=(x_1,x_2)$ and $y=(y_1,y_2)$ in $\mathbf{R}^2$.
Its Rayleigh-Bloch expansion (cf.~\cite{linton98}) is
\begin{equation}\label{eq:gd}
g(k,\kappa; x,y) = -\frac{i}{2d}  \sum_{n=-\infty}^{\infty} \frac{1}{\zeta_n(k,\kappa)} e^{i \kappa_n(x_1-y_1)+i\zeta_n |x_2-y_2| }.
\end{equation}
The exterior Green functions for the domains $\Omega_1$ and $\Omega_2$ above and below the slab
 with the Neumann boundary condition $\partial g^\mathrm{e}(\kappa; x,y)/\partial \nu_y=0$ along the boundaries $\{y_2=1\}$ and $\{y_2=0\}$ are equal to $g^\mathrm{e}(x,y)=g^\mathrm{e}(k,\kappa; x,y) =g(k,\kappa; x,y)+g(k,\kappa; x',y)$, where 
\begin{equation*}
x' = \left\{
\begin{array}{ll}
(x_1, 2-x_2) & \mbox{if} \; x, y \in \Omega_1,  \\
(x_1,-x_2) &  \mbox{if} \; x, y \in \Omega_2.
\end{array}
\right.
\end{equation*}

The interior Green functions $g_\varepsilon^{\mathrm{i},\pm}(x,y)$ in the slits $S_{\varepsilon}^{0,\pm}$ with the Neumann boundary condition are
\begin{equation*}
  \textstyle g_\varepsilon^{\mathrm{i},\pm}(k;x,y): = g_\varepsilon^{\mathrm{i},0}(k;x_1 \mp\frac{d_0}{2}, x_2; y_1 \mp\frac{d_0}{2}, y_2), 
\end{equation*}
in which $g_\varepsilon^{\mathrm{i},0}(k;x,y)$ satisfies
$$ \textstyle\Delta g_\varepsilon^{\mathrm{i},0}(k;x,y) + k^2 g_\varepsilon^{\mathrm{i},0}(k;x,y) = \delta(x-y), \quad x, y\in (-\frac{\varepsilon}{2},\frac{\varepsilon}{2})\times(0,1), $$
and it can be expressed explicitly as
$$ g_\varepsilon^{\mathrm{i},0}(k;x,y)= \sum_{m,n=0}^\infty c_{mn}\phi_{mn}(x)\phi_{mn}(y),$$
with $c_{mn}\!=\![k^2\!-\!(m\pi/\varepsilon)^2\!-\!(n\pi)^2]^{-1}$,
$ \phi_{mn}(x)\!=\!\sqrt{\frac{a_{mn}}{\varepsilon}}\cos\left(\frac{m\pi}{\varepsilon}\left(x_1+\frac{\varepsilon}{2}\right)\right) \cos(n\pi x_2)$,~and
\begin{equation*}
a_{mn} = \left\{
\begin{array}{llll}
1  & m=n=0, \\
2  & m=0, n\ge 1 \quad \mbox{or} \quad n=0, m\ge 1, \\
4  & m\ge 1, n \ge 1.
\end{array}
\right.
\end{equation*}

Applying Green's theorem in the reference period $\Omega^{(0)}:=\{ x \in \Omega_\varepsilon \; | \;   -\frac{d}{2}<x_1< \frac{d}{2} \}$ yields the following lemma for the total field $u_\varepsilon$.
\begin{lemma}\label{eq:u_eps_formula}
Let $u_\varepsilon(x)$ be the solution of the scattering problem \eqref{eq:Helmholtz}--\eqref{eq:field2}, then
\begin{eqnarray*}
u_\varepsilon(x) &=& \int_{\Gamma^+_{1,\varepsilon} \cup \Gamma^-_{1,\varepsilon}} g^\mathrm{e}(x,y) \frac{\partial u_\varepsilon(y)}{\partial y_2} ds_y + u^\mathrm{inc}(x)+ u^\mathrm{refl}(x)  \quad \mbox{for} \;\; x\in\Omega^{(0)} \cap \Omega_1, \\
u_\varepsilon(x) &=& -\int_{\Gamma^+_{2,\varepsilon} \cup \Gamma^-_{2,\varepsilon}} g^\mathrm{e}(x,y) \frac{\partial u_\varepsilon(y)}{\partial y_2} ds_y  \quad \mbox{for} \;\; x\in\Omega^{(0)} \cap \Omega_2, \\
u_\varepsilon(x) &=& -\int_{\Gamma^-_{1,\varepsilon}} g_\varepsilon^{\mathrm{i},-}(x,y) \frac{\partial u_\varepsilon(y)}{\partial y_2} ds_y 
+\int_{\Gamma^-_{2,\varepsilon} } g_\varepsilon^{\mathrm{i},-}(x,y) \frac{\partial u_\varepsilon(y)}{\partial y_2} ds_y \quad \mbox{for} \;\; x\in S_\varepsilon^{0,-}, \\
u_\varepsilon(x) &=& -\int_{\Gamma^+_{1,\varepsilon} } g_\varepsilon^{\mathrm{i},+}(x,y) \frac{\partial u_\varepsilon(y)}{\partial y_2} ds_y + \int_{\Gamma^+_{2,\varepsilon} } g_\varepsilon^{\mathrm{i},+}(x,y) \frac{\partial u_\varepsilon(y)}{\partial y_2} ds_y \quad \mbox{for} \;\; x\in S_\varepsilon^{0,+}.
\end{eqnarray*}
Here, $u^\mathrm{refl}(x)=e^{i (\kappa x_1 + \zeta_0 (x_2-1))}$ is the reflected field of the ground plane $\{x_2=1\}$ without the slits.
\end{lemma}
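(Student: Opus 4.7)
\medskip

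The plan is to apply Green's second identity in each of the four subdomains (the reference-cell portions of $\Omega_1$, $\Omega_2$, $S_\varepsilon^{0,-}$, and $S_\varepsilon^{0,+}$), pairing $u_\varepsilon$ with the Green's function appropriate to that region, and then use the Neumann conditions, the quasi-periodicity, and the outgoing behaviour to kill every boundary contribution except the ones across the apertures $\Gamma^{\pm}_{j,\varepsilon}$.

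\textbf{Upper region.} In $\Omega^{(0)}\cap\Omega_1$, I would first split $u_\varepsilon=u^{\mathrm{inc}}+u^{\mathrm{refl}}+u^{\mathrm{sc}}$, where $u^{\mathrm{sc}}$ is the purely outgoing scattered field given by the Rayleigh--Bloch series in \eqref{eq:field1} minus the specular reflection. By construction $\partial(u^{\mathrm{inc}}+u^{\mathrm{refl}})/\partial x_2=0$ on the full line $\{x_2=1\}$, so $\partial u^{\mathrm{sc}}/\partial x_2=\partial u_\varepsilon/\partial x_2$ on the slit apertures and $\partial u^{\mathrm{sc}}/\partial \nu=0$ on the metallic part. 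Applying Green's second identity to $u^{\mathrm{sc}}$ and $g^{\mathrm{e}}(x,\cdot)$ on the truncated box $\Omega^{(0)}\cap\Omega_1\cap\{x_2<R\}$, the top-slab boundary contribution vanishes because both functions have zero normal derivative there; the two vertical sides $x_1=\pm d/2$ cancel against each other since $u^{\mathrm{sc}}$ and $g^{\mathrm{e}}(x,\cdot)$ are both $\kappa$-quasi-periodic in $y_1$; and the horizontal top at $y_2=R$ tends to zero as $R\to\infty$ because both $u^{\mathrm{sc}}$ and $g^{\mathrm{e}}$ satisfy the outgoing Rayleigh--Bloch radiation condition (their propagating modes match phases and the evanescent modes decay exponentially). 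What survives is exactly the integral over $\Gamma_{1,\varepsilon}^{+}\cup\Gamma_{1,\varepsilon}^{-}$, yielding the first formula.

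\textbf{Lower region and slits.} The lower-region case is analogous but simpler: apply Green's identity to $u_\varepsilon$ and $g^{\mathrm{e}}$ on $\Omega^{(0)}\cap\Omega_2\cap\{x_2>-R\}$, noting that the ground plane $\{x_2=0\}$ contributes nothing (both Neumann), the sides cancel by quasi-periodicity, and the bottom at $y_2=-R$ vanishes by the radiation condition encoded in \eqref{eq:field2}. The sign difference from the upper case arises because the outward normal on $\Gamma_{2,\varepsilon}^{\pm}$ (viewed from $\Omega_2$) is $-e_2$. For each slit $S_\varepsilon^{0,\pm}$, I would apply Green's identity to $u_\varepsilon$ and $g_\varepsilon^{\mathrm{i},\pm}(x,\cdot)$: the two vertical walls of the slit contribute nothing because both $u_\varepsilon$ and the interior Green's function satisfy Neumann conditions there; only the two horizontal apertures $\Gamma_{1,\varepsilon}^{\pm}$ (top) and $\Gamma_{2,\varepsilon}^{\pm}$ (bottom) remain, and the outward unit normal on them is $+e_2$ and $-e_2$ respectively, which explains the opposite signs in the stated formulas.

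\textbf{Expected obstacles.} The only nontrivial points are bookkeeping: (i)~verifying that the truncation integrals at $y_2=\pm R$ really vanish in the limit, which requires noting that propagating Rayleigh modes of $u^{\mathrm{sc}}$ and of $g^{\mathrm{e}}$ have matching phase factors $e^{\pm i\zeta_n y_2}$ so their Wronskian-type boundary term is zero, while evanescent terms give $O(e^{-cR})$; (ii)~keeping track of signs from the direction of the outward normals on each aperture; and (iii)~ensuring that $u^{\mathrm{refl}}$ in the first formula is correctly identified, which follows from the fact that $u^{\mathrm{inc}}+u^{\mathrm{refl}}$ is precisely the field in the unperforated-ground-plane problem and the Dirichlet-to-Neumann trace of that part on $\{x_2=1\}$ is zero. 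With these checks in place, the four representation formulas follow directly from Green's second identity, so this is a routine but careful application of potential theory rather than a deep argument.
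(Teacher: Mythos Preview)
Your proposal is correct and matches the paper's approach: the paper states only that the lemma follows from ``applying Green's theorem in the reference period $\Omega^{(0)}$'' and gives no further proof, so your outline is precisely the standard elaboration of that one-line remark. Your bookkeeping of signs, the splitting $u_\varepsilon=u^{\mathrm{inc}}+u^{\mathrm{refl}}+u^{\mathrm{sc}}$ in $\Omega_1$, and the handling of the far-field truncation via the outgoing Rayleigh--Bloch structure are all appropriate; the only cosmetic slip is that in point~(iii) you mean the Neumann trace of $u^{\mathrm{inc}}+u^{\mathrm{refl}}$ on $\{x_2=1\}$, not a Dirichlet-to-Neumann trace.
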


Taking the limit of layer potentials in this lemma to the slit apertures and imposing the continuity condition leads to the following system of four integral equations:
\begin{equation}\label{eq:scattering2}
\left\{
\begin{array}{l}
\displaystyle{\int_{\Gamma^+_{1,\varepsilon} \cup \Gamma^-_{1,\varepsilon}}  g^\mathrm{e}(x,y)\dfrac{\partial u_\varepsilon(y)}{\partial y_2} ds_y 
+\int_{\Gamma^\mp_{1,\varepsilon}} g_\varepsilon^{\mathrm{i},\mp}(x,y) \dfrac{\partial u_\varepsilon(y)}{\partial y_2} ds_y
-\int_{\Gamma^\mp_{2,\varepsilon}} g_\varepsilon^{\mathrm{i},\mp}(x,y) \dfrac{\partial u_\varepsilon(y)}{\partial y_2} ds_y}\;+ \\ \\
 \hspace{1em} +\; u^\mathrm{inc}(x)+u^\mathrm{refl}(x) =0, \quad \mbox{for} \,\, x\in\Gamma^\mp_{1,\varepsilon}, \\ \\ 
\displaystyle{\int_{\Gamma^+_{2,\varepsilon} \cup \Gamma^-_{2,\varepsilon}}  g^\mathrm{e}(x,y)\dfrac{\partial u_\varepsilon(y)}{\partial y_2} ds_y
-\int_{\Gamma^\mp_{1,\varepsilon}} g_\varepsilon^{\mathrm{i},\mp}(x,y) \dfrac{\partial u_\varepsilon(y)}{\partial y_2} ds_y
+\int_{\Gamma^\mp_{2,\varepsilon}} g_\varepsilon^{\mathrm{i},\mp}(x,y) \dfrac{\partial u_\varepsilon(y)}{\partial y_2} ds_y
=0}, \\ \\
 \hspace{1em}\; \mbox{for} \,\, x\in\Gamma^\mp_{2,\varepsilon}.
\end{array}
\right.
\end{equation}

The slit apertures are rescaled to the $\varepsilon$-independent variable $X\in I : = (-\frac{1}{2},\frac{1}{2})$ by
$$ x_1 =  \varepsilon X \pm \frac{d_0}{2}  \quad \mbox{for} \; (x_1,1) \in \Gamma_{1,\varepsilon}^\pm \;\; \mbox{and} \; (x_1,0) \in \Gamma_{2,\varepsilon}^\pm.$$
The following quantities will be used in the boundary-integral formulation of the scattering problem,
\begin{eqnarray*}
&& \varphi_1^\pm(X):= \frac{\partial u_\varepsilon}{\partial y_2}( \varepsilon X\pm\textstyle \frac{d_0}{2},\, 1), 
 \quad \varphi_2^\pm(X):= -\frac{\partial u_\varepsilon}{\partial y_2}( \varepsilon X\pm\textstyle \frac{d_0}{2},\, 0),
   \\ 
&& f^\pm(X):= -\frac{1}{2} (u^\mathrm{inc}+u^\mathrm{refl})(\varepsilon X\pm\textstyle \frac{d_0}{2},\, 1) = -e^{i \kappa   (\varepsilon X\pm\frac{d_0}{2} )}.
\end{eqnarray*}
The Green functions are also rescaled:
\begin{eqnarray*}
 G_\varepsilon^\mathrm{e}(X, Y) &:=&  \textstyle g^\mathrm{e}( \varepsilon X\pm\frac{d_0}{2},1;  \varepsilon Y\pm\frac{d_0}{2},1) = 
g^\mathrm{e}( \varepsilon X\pm\frac{d_0}{2},0;  \varepsilon Y\pm\frac{d_0}{2},0) \\
 && = -\frac{i}{d} \sum_{n=-\infty}^{\infty} \frac{1}{\zeta_n(k,\kappa)   } e^{i \kappa_n\varepsilon(X-Y)};  
 \end{eqnarray*}
 \vspace*{-20pt}
 \begin{eqnarray*}
 G_\varepsilon^{\mathrm{e},\pm}(X, Y) &:=& \textstyle g^\mathrm{e}( \varepsilon X\pm\frac{d_0}{2},1; \varepsilon Y \mp \frac{d_0}{2},1) =
  \textstyle g^\mathrm{e}( \varepsilon X\pm\frac{d_0}{2},0; \varepsilon Y \mp \frac{d_0}{2},0) \\
 && =  -\frac{i}{d} \sum_{n=-\infty}^{\infty} \frac{1}{\zeta_n(k,\kappa)   } e^{i \kappa_n (\varepsilon(X-Y)\pm d_0)};
 \end{eqnarray*}
  \vspace*{-20pt}
 \begin{eqnarray*}
 G_\varepsilon^\mathrm{i}(X, Y) &:=& \textstyle g_\varepsilon^{\mathrm{i},\pm}( \varepsilon X\pm\frac{d_0}{2}, 1; \varepsilon Y\pm\frac{d_0}{2}, 1 ) = 
\textstyle g_\varepsilon^{\mathrm{i},\pm}( \varepsilon X\pm\frac{d_0}{2}, 0; \varepsilon Y\pm\frac{d_0}{2}, 0 )  \\
 &=& \varepsilon^{-1} \sum_{m,n=0}^\infty c_{mn}a_{mn}\cos\left(m\pi (X+\textstyle\frac{1}{2})\right ) \cos\left(m\pi (Y+\textstyle\frac{1}{2})\right ); \\ 
\tilde  G_\varepsilon^\mathrm{i}(X, Y) &:=& \textstyle g_\varepsilon^{\mathrm{i},\pm}(\varepsilon X\pm\frac{d_0}{2}, 1;  \varepsilon Y\pm\frac{d_0}{2}, 0 )
=  g_\varepsilon^{\mathrm{i},0}(\varepsilon X, 1; \varepsilon Y, 0) \\
 && = \varepsilon^{-1} \sum_{m,n=0}^\infty(-1)^n c_{mn}a_{mn} \cos\left(m\pi (X+\textstyle\frac{1}{2})\right ) \cos\left(m\pi (Y+\textstyle\frac{1}{2})\right ).
\end{eqnarray*}
Define the following rescaled boundary-integral operators for $X\in I$:
\begin{eqnarray*}
 && [T^\mathrm{e} \varphi](X) = \int_I G_\varepsilon^\mathrm{e}(X, Y)  \varphi(Y) dY, \quad [T^{\mathrm{e},\pm} \varphi](X) = \int_I G_\varepsilon^{\mathrm{e},\pm}(X, Y)  \varphi(Y) dY;   
 \\
 && [T^\mathrm{i}  \varphi] (X) = \int_I  G_\varepsilon^\mathrm{i}(X, Y) \varphi(Y) dY,  \quad [\tilde T^\mathrm{i}  \varphi] (X) = \int_I  \tilde G_\varepsilon^\mathrm{i}(X, Y) \varphi(Y) dY. 
\end{eqnarray*}
Then $T^\mathrm{e}$ and $T^\mathrm{i}$ are bounded operators from $H^{-1/2}(I)$ to $H^{1/2}(I)$, and $T^{\mathrm{e},\pm}$ and $\tilde T^\mathrm{i}$
are smooth operators due to the smoothness of the kernels.

\begin{proposition}
The system (\ref{eq:scattering2}) is equivalent to the system
$\mathbb{T}\boldsymbol{\varphi}=\varepsilon^{-1}\mathbf{f}$, with
\begin{equation}\label{eq:optT}
\mathbb{T}=\left[
\begin{array}{cccc}
T^\mathrm{e}+T^\mathrm{i}    &  T^{\mathrm{e},-}   & \tilde T^\mathrm{i} & 0  \\
 T^{\mathrm{e},+}     &  T^\mathrm{e}+T^\mathrm{i} & 0 & \tilde T^\mathrm{i}     \\
  \tilde T^\mathrm{i}  & 0  & T^\mathrm{e}+T^\mathrm{i}    &  T^{\mathrm{e},-}  \\
   0  & \tilde T^\mathrm{i}  & T^{\mathrm{e},+}  & T^\mathrm{e}+T^\mathrm{i}   
\end{array}
\right], \quad
\boldsymbol{\varphi} =
\left[
\begin{array}{cccc}
\varphi_1^-    \\
\varphi_1^+   \\
\varphi_2^-   \\
\varphi_2^+
\end{array}
\right], \quad
\mathbf{f}=\left[
\begin{array}{cccc}
2f^-  \\
2f^+  \\
0   \\
0
\end{array}
\right]
\end{equation}
and $f^\pm(X)= -e^{i\kappa(\varepsilon X\pm\, d_0/2)}$.
\end{proposition}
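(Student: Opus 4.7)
\medskip

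\noindent\textbf{Proof proposal.} The plan is to take each of the four scalar integral equations encoded in (\ref{eq:scattering2}), restrict it to one of the four rescaled apertures, and identify each integral with one of the boundary-integral operators $T^\mathrm{e}, T^{\mathrm{e},\pm}, T^\mathrm{i}, \tilde T^\mathrm{i}$ acting on the rescaled densities $\varphi_1^\pm, \varphi_2^\pm$. The passage between the two formulations is purely a change of variables, so the argument is a direct verification that naturally splits into four rows; the only genuine content is keeping track of signs and of which exterior/interior kernel appears in each position.

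First I would fix one row, say the equation on $\Gamma^-_{1,\varepsilon}$ (the upper sign in the first line of (\ref{eq:scattering2})), and apply the substitutions $x_1 = \varepsilon X - d_0/2$, $y_1 = \varepsilon Y \pm d_0/2$, with $ds_y = \varepsilon\, dY$. Using the definitions of $G^\mathrm{e}_\varepsilon, G^{\mathrm{e},\pm}_\varepsilon, G^\mathrm{i}_\varepsilon, \tilde G^\mathrm{i}_\varepsilon$, the term $\int_{\Gamma^-_{1,\varepsilon}} g^\mathrm{e}\,\partial_{y_2}u_\varepsilon\,ds_y$ becomes $\varepsilon (T^\mathrm{e}\varphi_1^-)(X)$, the term on $\Gamma^+_{1,\varepsilon}$ becomes $\varepsilon (T^{\mathrm{e},-}\varphi_1^+)(X)$ (this is the place where the choice of $\pm$ in $G^{\mathrm{e},\pm}_\varepsilon$ is pinned down by which aperture carries $x$ and which carries $y$), the term $\int_{\Gamma^-_{1,\varepsilon}} g^{\mathrm{i},-}_\varepsilon\,\partial_{y_2}u_\varepsilon\,ds_y$ becomes $\varepsilon (T^\mathrm{i}\varphi_1^-)(X)$, and the term on $\Gamma^-_{2,\varepsilon}$ becomes $-\varepsilon (\tilde T^\mathrm{i}\varphi_2^-)(X)$ by the definition of $\tilde G^\mathrm{i}_\varepsilon$; the sign convention $\varphi_2^\pm = -\partial_{y_2}u_\varepsilon$ on the lower aperture absorbs the minus sign in front of this term, yielding $+\varepsilon(\tilde T^\mathrm{i}\varphi_2^-)(X)$. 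Finally $u^\mathrm{inc}+u^\mathrm{refl}$ on $\Gamma^-_{1,\varepsilon}$ equals $-2f^-(X)$ by definition. Collecting gives
\begin{equation*}
\varepsilon\bigl[(T^\mathrm{e}+T^\mathrm{i})\varphi_1^- + T^{\mathrm{e},-}\varphi_1^+ + \tilde T^\mathrm{i}\varphi_2^- + 0\cdot\varphi_2^+\bigr](X) = 2f^-(X),
\end{equation*}
which is exactly the first row of $\mathbb{T}\boldsymbol{\varphi}=\varepsilon^{-1}\mathbf{f}$.

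The remaining three rows are obtained in the same way: restricting the first equation of (\ref{eq:scattering2}) to $\Gamma^+_{1,\varepsilon}$ (lower sign) produces the second row with $T^{\mathrm{e},+}$ in the off-diagonal slot, while restricting the second equation of (\ref{eq:scattering2}) to $\Gamma^-_{2,\varepsilon}$ and $\Gamma^+_{2,\varepsilon}$ produces the last two rows; here the right-hand side is $0$ since no incident/reflected field is evaluated on the lower surface. The symmetry $G^\mathrm{e}_\varepsilon$ and $G^{\mathrm{e},\pm}_\varepsilon$ being the same whether evaluated at $x_2=y_2=1$ or $x_2=y_2=0$ (already noted in their definitions) is precisely what makes the $2\times 2$ block structure of $\mathbb{T}$ symmetric between the top-top and bottom-bottom couplings, while the interior operator $\tilde T^\mathrm{i}$ couples top and bottom apertures of the \emph{same} slit, which is why the zeros appear in the $(1,4), (2,3), (3,2), (4,1)$ positions. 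Conversely, given a solution of $\mathbb{T}\boldsymbol{\varphi}=\varepsilon^{-1}\mathbf{f}$, the representation formulas of Lemma~\ref{eq:u_eps_formula} define a field $u_\varepsilon$ whose continuity and Neumann conditions recover (\ref{eq:scattering2}).

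The main potential pitfall is purely bookkeeping: (i) correctly pairing the sign $\pm$ of the aperture containing $x$ with the opposite sign of the aperture containing $y$ in the definition of $G^{\mathrm{e},\pm}_\varepsilon$, and (ii) handling the sign flip in the definition of $\varphi_2^\pm$ versus $\partial_{y_2}u_\varepsilon$ on the lower surface. Once these conventions are made explicit, the equivalence reduces to comparing coefficients row by row, and the factor $\varepsilon^{-1}$ on the right-hand side is simply the Jacobian $\varepsilon$ of the rescaling $ds_y = \varepsilon\, dY$ moved from the left to the right side of the equation.
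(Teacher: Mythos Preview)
Your proposal is correct and is precisely the natural direct verification; the paper in fact states this proposition without proof, treating it as an immediate consequence of the rescaling definitions preceding it, so your explicit row-by-row change of variables is exactly the argument the reader is expected to supply.
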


\section{Asymptotic expansion of the integral operators}\label{sec:bie_asymptotic_analysis}

The rest of this paper will concern parameters $k$ and $\kappa$ such that
$|\kappa|<\pi/d$ and $(\kappa+2n\pi/d)^2>k^2$ for all integers $n\not=0$.  In particular, $0<k<2\pi/d$.
This is the parameter regime in which $\zeta_0$ is real and $\zeta_n$ is imaginary for $n\not=0$, that is, there is exactly one propagating Rayleigh mode.  In the $\kappa$-$k$ plane, it is a diamond-shaped region $D_1$,
\begin{equation}\label{diamond}
  D_1 = \left\{ (\kappa,k) :  |\kappa|<\pi/d,\; 0<k<\left| \kappa+2n\pi/d \right| \;\forall n\not=0  \right\}.
\end{equation}

The behavior of the rescaled Green functions to leading order in $\varepsilon$ and $|\kappa|$ is independent of $(X,Y)$.  
This will reduce the leading asymptotics of (\ref{eq:optT}) to a four-dimensional system involving the average field values on the ends of the slits $\Gamma^\pm_{i,\varepsilon}$.  The following quantities, which depend on $k$, $\kappa$, and $\epsilon$, describe this behavior.

\begin{eqnarray}
 && \beta_\mathrm{e}(k,\kappa, \varepsilon)= \frac{1}{\pi} \ln \frac{2\pi\,\varepsilon}{d} + \sum_{n\neq 0} \left( \frac{1}{2\pi} \frac{1}{|n|} - \frac{i}{d}  \frac{1}{\zeta_n(k,\kappa)}\right)
  - \frac{i}{d}  \frac{1}{\zeta_0(k,\kappa)}, \label{beta_e} \\
&& \beta^\pm(k,\kappa) = -\frac{i}{d} \sum_{n=-\infty}^{\infty} \frac{1}{\zeta_n(k,\kappa)   } e^{\pm i \kappa_n d_0},
\label{beta_pm}  \\
&& \beta_\mathrm{i}(k, \varepsilon)= \frac{1}{\varepsilon\, k\tan k } +  \frac{2\ln 2}{\pi},  \quad\quad \tilde \beta(k,\varepsilon) = \frac{1}{ \varepsilon\,k\sin k},  \label{beta_i_tilde} \\
&& \beta(k,\kappa, \varepsilon)=\beta_\mathrm{e}(k,\kappa, \varepsilon) + \beta_\mathrm{i}(k, \varepsilon), \quad 
\gamma(k, \kappa) = \beta(k,\kappa,\varepsilon) -  \frac{1}{\varepsilon\,k\tan k } - \frac{1}{\pi} \ln \varepsilon. \label{gamma}  
\end{eqnarray}

The asymptotic expansions for the kernels $G_\varepsilon^\mathrm{e}$,  $G_\varepsilon^{\mathrm{e},\pm}$, $G^i$ and $\tilde G^i$ are given in the following two lemmas.
\begin{lemma} \label{lem:green_ext_asy} 
For $|\kappa| \ll 1$ and $\varepsilon \ll 1$, if $k \in (0, 2\pi/d)$, then
\begin{eqnarray}\label{eq:Ge_asy}
G_\varepsilon^\mathrm{e}(X, Y)&=&\beta_\mathrm{e}(k, \kappa,\varepsilon) + \frac{1}{\pi}  \ln |X-Y| +  r_\mathrm{e}(\kappa,\varepsilon; X,Y), \label{Ge_exp} \\
G_\varepsilon^{\mathrm{e},\pm}(X, Y)&=& \beta^\pm(k, \kappa)  +  \rho^\pm(\kappa,\varepsilon; X,Y).       \label{Ge_pm_asy}  
\end{eqnarray}
Here $r_\mathrm{e}$, $\rho^\pm$ are bounded functions with $r_\mathrm{e} \sim O(\varepsilon^2|\ln\varepsilon|+|\kappa|\varepsilon)$ and $\rho^\pm\sim O(\varepsilon)$ for all $X, Y\in I$.
In addition, the following holds:
\begin{enumerate}
\item[(i)] For $\kappa=0$, there holds
\begin{eqnarray} 
\hspace{15pt} && \beta_\mathrm{e}^\pm(k, 0) = \hat\beta_\mathrm{e}(k) :=  -\frac{i}{d} \cdot \frac{1}{\zeta_0(k)}  - \sum_{n=1}^{\infty} \frac{2}{\sqrt{(2\pi n)^2-(kd)^2}} \cos(\kappa_n d_0), \label{eq:beta_kappa0} \\
\hspace{15pt} && r_\mathrm{e}(0, \varepsilon; X,Y) = \hat r_\mathrm{e}(|X-Y|), \quad \rho^\pm(0,\varepsilon; X,Y) = \hat\rho( |\pm d_0+\varepsilon(X-Y)|)  \label{eq:re_kappa0}
\end{eqnarray}
for some real-valued functions $\hat r_\mathrm{e}$ and $\hat \rho$, where  $\hat r_\mathrm{e}\sim O(\varepsilon^2|\ln\varepsilon|)$ and $\hat \rho \sim O(\varepsilon)$.
\item[(ii)] If $|\kappa| \ll 1$, then
\begin{equation*}
 r_\mathrm{e}(\kappa,\varepsilon; X,Y) = r_\mathrm{e}(0,\varepsilon; X,Y)  + O(\kappa\varepsilon), \quad \rho^\pm(\kappa,\varepsilon; X,Y) = \rho^\pm(0,\varepsilon; X,Y) + O(\kappa\varepsilon). 
\end{equation*}
\end{enumerate}
\end{lemma}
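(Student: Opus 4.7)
The plan is to isolate the logarithmic singularity of $G_\varepsilon^\mathrm{e}$ via a closed-form Fourier sum identity, then Taylor-expand the remaining smooth part in $\varepsilon$ and $\kappa$. Since $-i/(d\zeta_n)\sim -1/(2\pi|n|)$ as $|n|\to\infty$, I would split
\begin{equation*}
G_\varepsilon^\mathrm{e}(X,Y) = -\frac{i}{d\zeta_0}e^{i\kappa\varepsilon(X-Y)} + \sum_{n\neq 0}\Bigl(-\frac{i}{d\zeta_n}+\frac{1}{2\pi|n|}\Bigr)e^{i\kappa_n\varepsilon(X-Y)} - \frac{e^{i\kappa\varepsilon(X-Y)}}{2\pi}\sum_{n\neq 0}\frac{e^{i(2\pi n/d)\varepsilon(X-Y)}}{|n|},
\end{equation*}
where the bracket in the middle sum is $O(1/|n|^3)$ (absolutely convergent) and the third sum is evaluated in closed form by the classical identity $\sum_{n\neq 0}e^{in\omega}/|n|=-2\ln|2\sin(\omega/2)|$ with $\omega=2\pi\varepsilon(X-Y)/d$. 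The expansions $\ln|2\sin(\pi\varepsilon(X-Y)/d)|=\ln|X-Y|+\ln(2\pi\varepsilon/d)+O(\varepsilon^2)$ and $e^{i\kappa\varepsilon(X-Y)}=1+O(\kappa\varepsilon)$ then separate out the singular $\frac{1}{\pi}\ln|X-Y|$ term, the constant $\beta_\mathrm{e}(k,\kappa,\varepsilon)$ (comprised of its $\frac{1}{\pi}\ln(2\pi\varepsilon/d)$ piece and the bracketed series), and a bounded remainder $r_\mathrm{e}$.

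Bounding $r_\mathrm{e}$ is the delicate step. The surviving contribution of the middle sum,
\begin{equation*}
\sum_{n\neq 0}\Bigl(-\frac{i}{d\zeta_n}+\frac{1}{2\pi|n|}\Bigr)\bigl(e^{i\kappa_n\varepsilon(X-Y)}-1\bigr),
\end{equation*}
I would estimate by splitting the summation at $|n|\sim 1/\varepsilon$. For $|n|\leq 1/\varepsilon$, Taylor expansion $e^{i\theta}-1=i\theta-\theta^2/2+O(\theta^3)$ combined with the fact that at $\kappa=0$ the bracketed coefficient is even in $n$ while $\kappa_n=2\pi n/d$ is odd kills the linear-in-$\varepsilon$ contribution by symmetry; the surviving quadratic term has summand $O(\varepsilon^2/|n|)$, producing the borderline total $O(\varepsilon^2|\ln\varepsilon|)$. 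For $|n|>1/\varepsilon$ the crude bound $|e^{i\theta}-1|\leq 2$ controls the tail by $O(\varepsilon^2)$. Nonzero $\kappa$ breaks the parity of the bracket, and the resulting linear-in-$\varepsilon$ piece, together with the factor $e^{i\kappa\varepsilon(X-Y)}-1$ multiplying the remaining expansion terms, supplies the $O(|\kappa|\varepsilon)$ contribution (up to subordinate log factors).

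For the off-diagonal kernel $G_\varepsilon^{\mathrm{e},\pm}$ the analysis is cleaner. Viewing $G_\varepsilon^{\mathrm{e},\pm}(X,Y)$ as a function of the separation $\pm d_0+\varepsilon(X-Y)$, the argument stays bounded away from every lattice singularity of $g^\mathrm{e}$ when $\varepsilon$ is small and $0<d_0<d$, so $g^\mathrm{e}$ is smooth there. Taylor expansion about $\varepsilon(X-Y)=0$ gives the value $\beta^\pm(k,\kappa)$ and a first-order remainder $\rho^\pm=O(\varepsilon)$. Termwise justification of this expansion uses summation by parts (Dirichlet's test) on the oscillatory factor $e^{\pm i\kappa_n d_0}$, which provides sufficient cancellation for the series and its derivatives to behave like a smooth function uniformly in $(X,Y)\in I\times I$.

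Parts (i) and (ii) then follow from symmetry and Lipschitz perturbation arguments. At $\kappa=0$ one has $\kappa_{-n}=-\kappa_n$ and $\zeta_{-n}=\zeta_n$, so pairing $n$ and $-n$ converts each exponential into a cosine; this makes $G_\varepsilon^\mathrm{e}$ and $G_\varepsilon^{\mathrm{e},\pm}$ depend on $(X,Y)$ only through the scalar separations $|X-Y|$ and $|\pm d_0+\varepsilon(X-Y)|$, hence $\hat r_\mathrm{e}$ and $\hat\rho$ are real-valued functions of those variables, and the same pairing applied to $\beta^\pm(k,0)$ produces the explicit $\hat\beta_\mathrm{e}(k)$. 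For (ii), differentiating under the sum using $\partial_\kappa\zeta_n=-\kappa_n/\zeta_n$ and $\partial_\kappa e^{i\kappa_n\varepsilon(X-Y)}=i\varepsilon(X-Y)e^{i\kappa_n\varepsilon(X-Y)}$, then integrating from $0$ to $\kappa$ and bounding the integrand uniformly in $n$ by the same two-regime split as above, gives $r_\mathrm{e}(\kappa,\varepsilon;\cdot)-r_\mathrm{e}(0,\varepsilon;\cdot)=O(\kappa\varepsilon)$; the same method handles $\rho^\pm$. The main obstacle I expect is the sharp $O(\varepsilon^2|\ln\varepsilon|)$ estimate at $\kappa=0$: without exploiting the $n\leftrightarrow-n$ parity cancellation one only obtains $O(\varepsilon)$, so the proof must carry out the Taylor expansion of $e^{i\kappa_n\varepsilon(X-Y)}$ to second order in $\kappa_n\varepsilon$ and track the borderline logarithmic sum $\sum_{1\leq|n|\leq 1/\varepsilon}1/|n|\sim|\ln\varepsilon|$ carefully.
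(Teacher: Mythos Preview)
Your approach is correct and follows essentially the same strategy as the paper: both isolate the logarithmic singularity via the closed-form identity $\sum_{n\neq 0}e^{in\omega}/|n|=-2\ln|2\sin(\omega/2)|$, exploit the $n\leftrightarrow -n$ parity at $\kappa=0$ to obtain the sharp $O(\varepsilon^2|\ln\varepsilon|)$ bound, and handle $G_\varepsilon^{\mathrm{e},\pm}$ by Taylor expansion about the nonzero separation $\pm d_0$. The paper's bookkeeping is organized slightly differently---it first splits $1/(i\zeta_n)$ into parts $h_n^\mathrm{e}$ and $h_n^\mathrm{o}$ even and odd in $n$ and then evaluates each piece using closed-form cosine and sine series, which makes the cancellation of the borderline $O(\kappa\varepsilon|\ln\varepsilon|)$ cross terms (your ``subordinate log factors'') explicit rather than leaving it to a dyadic estimate---but the underlying mechanism is the same.
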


\begin{proof}  We first derive the asymptotic expansion of $ G^\mathrm{e}_\varepsilon(X,Y)$.  From the definition of the Green's function, we see that
\begin{equation}\label{eq:sum_Ge1}
  G^\mathrm{e}_\varepsilon(X,Y) \;=\;
  \frac{1}{d} \sum_{n\in\mathbb{Z}} \frac{1}{i\zeta_n(k,\kappa)}\, e^{i\kappa_n\varepsilon(X-Y)}.
\end{equation}
Set $a=2\pi/d$, and $Z=X-Y$.
For $n\neq0$, from the definition of $\zeta_n$, one can write $i\zeta_n = -\sqrt{(\kappa+an)^2-k^2\,}$ and it follows that
\begin{equation}
  \frac{1}{i\zeta_n(k,\kappa)} \;=\; -\frac{1}{a|n|}
  \left(\sqrt{1+\frac{2\kappa}{an}+\frac{\kappa^2-k^2}{(an)^2}\,}\right)^{-1}.
\end{equation}

For $(\kappa,k)\in D_1$, we have $\left| \frac{2\kappa}{an}+\frac{\kappa^2-k^2}{(an)^2} \right|  < 1$ for $n\neq0$.
Applying the Taylor expansion and splitting $(i\zeta_n)^{-1}$ into its even and odd parts with respect to the variable $n$ yields
$\frac{1}{i\zeta_n(k,\kappa)} \;=\; h^\mathrm{e}_n(k,\kappa) + h^\mathrm{o}_n(k,\kappa)$,
and consequently,
\begin{equation}\label{eq:sum_Ge2}
  \sum_{n\in\mathbb{Z}} \frac{e^{i\kappa_n\varepsilon Z}}{i\zeta_n(k,\kappa)}\, 
  \;=\;
  e^{i\kappa\varepsilon Z}
  \left( \frac{1}{i\zeta_0} 
  + 2 \sum_{n=1}^\infty h^\mathrm{e}_n(k,\kappa) \cos(n a\varepsilon Z)
  + 2i \sum_{n=1}^\infty h^\mathrm{o}_n(k,\kappa) \sin(n a \varepsilon Z)
  \right).
\end{equation}
If $\kappa= 0$, it can be calculated that
$$ h^\mathrm{e}_n(k,0)  = - \sum_{n = 1}^\infty \frac{1}{an} \left( 1+\sum_{m=1}^\infty \frac{c_m}{n^{2m}} \right),  \quad \mbox{where} \quad  c_m=\frac{1\cdot3\cdots (2m-1)}{2^m m!}\left(\frac{k}{a }\right)^{2m}, $$
and $h^\mathrm{o}_n(k,0)=0$, thus
$$ 2 \sum_{n=1}^\infty h^\mathrm{e}_n(k,0) \cos(na\varepsilon Z) =  -\sum_{n = 1}^\infty \frac{2}{an} \cos(na\varepsilon Z)  - \frac{1}{a}  \sum_{m=1}^\infty c_m \sum_{n=1}^\infty  \frac{1}{n^{2m+1}}\cos(na\varepsilon Z). $$
Using the relations (cf.~\cite{collin, kress})
\begin{eqnarray*}
  && - \sum_{n = 1}^\infty \frac{2}{n} \cos \big( n a\varepsilon Z\big) = \ln\left(4\sin^2\frac{a\varepsilon Z}{2}\right), \label{eq:exp_cos1} \\
  && \sum_{n = 1}^\infty \frac{1}{n^{2m+1}} \cos \left(na\varepsilon Z\right) = \sum_{n = 1}^\infty \frac{1}{n^{2m+1}}+ O(\varepsilon^{2m}Z^{2m}) \cdot |\ln (\varepsilon|Z|) |  \quad (m\ge 1),
\end{eqnarray*}
in which the big-O remainder is uniform over $m$, we obtain
\begin{eqnarray}
  2\sum_{n=1}^\infty h^\mathrm{e}_n(k,\kappa) \cos(an\varepsilon Z)
   &=& \frac{1}{a} \ln\left( 4\sin^2\frac{a\varepsilon Z}{2} \right) 
    + 2 \sum_{n=1}^\infty \left( h^\mathrm{e}_n(k,0) + \frac{1}{an} \right) 
    + O(\varepsilon^2|\ln\varepsilon|) \nonumber  \\
   &=& 
 \label{eq:sum_exp} 
 \frac{2}{a} \ln (a\varepsilon|Z|) + 2\sum_{n=1}^\infty \left(  \frac{1}{i\zeta_n(k,0)} + \frac{1}{an} \right) + O(\varepsilon^2|\ln\varepsilon|). 
\end{eqnarray}
The desired asymptotic expansion for $G_\varepsilon^\mathrm{e}(X, Y)$ follows by combining \eqref{eq:sum_Ge1}, \eqref{eq:sum_Ge2} and \eqref{eq:sum_exp}. In addition, the above expansion shows that $r_\mathrm{e}(0,\varepsilon;X, Y)$ is a function of $|Z|:=|X-Y|$ and is real when $\kappa=0$.

If $|\kappa|\ll 1$, the even part can be written as $\displaystyle{h^\mathrm{e}_n(k,\kappa) \;=\; -\frac{1}{a|n|} +
    \frac{1}{|an|^3} \sum_{m=0}^\infty \frac{c_m(k,\kappa)}{(an)^{2m}}}$.
Similar calculations yield
\begin{equation}\label{eq:sum_even}
2\sum_{n=1}^\infty h^\mathrm{e}_n(k,\kappa) \cos(na\varepsilon Z)
  = \frac{2}{a} \ln (a\varepsilon|Z|) 
       + 2\sum_{n=1}^\infty \left( h^\mathrm{e}_n(k,\kappa) + \frac{1}{an} \right) + O(\varepsilon^2|\ln\varepsilon|).
\end{equation}
The odd term $h^\mathrm{o}_n(k,\kappa)$ has the form
$ \displaystyle{
  h^\mathrm{o}_n(k,\kappa) \;=\;
    \kappa\,\mathrm{sgn}(n) \sum_{m=1}^\infty \frac{b_m(k,\kappa)}{(an)^{2m}} }, $
and thus
\begin{equation*}
 \sum_{n=1}^\infty h^\mathrm{o}_n(k,\kappa) \sin(na\varepsilon Z) =   \kappa\, \sum_{m=1}^\infty b_m(k,\kappa)
  \sum_{n=1}^\infty \frac{1}{(an)^{2m}} \sin(na\varepsilon Z) = :  \kappa\, \sum_{m=1}^\infty b_m(k,\kappa) A_m(z).
\end{equation*}
By noting that $b_1=1$, we obtain (cf.~\cite{collin})
$$ A_1(z) =  \sum_{n=1}^\infty \frac{1}{(na)^{2}} \sin(na\varepsilon Z) = - \frac{1}{a} \varepsilon Z \ln (a \varepsilon |Z|) + O(\varepsilon). $$
On other hand, for $m>1$, $ A_m = O(\varepsilon)$. Hence,
\begin{equation}\label{eq:sum_odd}
 2 \sum_{n=1}^\infty h^\mathrm{o}_n(k,\kappa) \sin(an\varepsilon Z)  =  - \frac{2}{a} \kappa\varepsilon Z \ln (a \varepsilon |Z|) + O(\kappa\varepsilon).
\end{equation}

Substituting the sums \eqref{eq:sum_even} and \eqref{eq:sum_odd} into \eqref{eq:sum_Ge2} and using the expansion 
$e^{i\kappa\varepsilon Z}= 1+i \kappa\varepsilon Z + O((\kappa\varepsilon)^2)$,
we obtain
\begin{eqnarray*}
\sum_{n\in\mathbb{Z}} \frac{e^{i\kappa_n\varepsilon Z}}{i\zeta_n(k,\kappa)} &=&  \frac{e^{i\kappa\varepsilon Z}}{i\zeta_0(k,\kappa)} + \frac{2}{a} \ln (a\varepsilon|Z|) + 2 \sum_{n=1}^\infty \left( h^\mathrm{e}_n(k,\kappa) + \frac{1}{an} \right) 
    + O(\varepsilon^2|\ln\varepsilon|) + O(\kappa\varepsilon) \\
  &=& \frac{2}{a} \ln (a\varepsilon|Z|) +  \sum_{n\neq0} \left(  \frac{1}{i\zeta_n(k,\kappa)}  + \frac{1}{a|n|} \right) +  \frac{1}{i\zeta_0(k,\kappa)}
    + O(\varepsilon^2|\ln\varepsilon|) + O(\kappa\varepsilon).
\end{eqnarray*}
Therefore, the desired expansion
$ G_\varepsilon^\mathrm{e}(X, Y)=\beta_\mathrm{e}(k, \kappa,\varepsilon) + \frac{1}{\pi}  \ln |X-Y| +  r_\mathrm{e}(\kappa,\varepsilon; X,Y) $
follows, where $\beta_e$ is defined in \eqref{beta_e} and $r_\mathrm{e}=O(\varepsilon)$. In addition, from the above calculations, it is clear that
$r_\mathrm{e}(\kappa,\varepsilon; X,Y) = r_\mathrm{e}(0,\varepsilon; X,Y) + O(\kappa\varepsilon) $.

Now for 
$\displaystyle{G_\varepsilon^{\mathrm{e},\pm}(X, Y)  =  -\frac{i}{d} \sum_{n=-\infty}^{\infty} \frac{1}{\zeta_n(k,\kappa)   } e^{i \kappa_n (\varepsilon(X-Y)\pm d_0)}}$,
an analogous expansion as $G_\varepsilon^e(X,Y)$ leads to $G_\varepsilon^{\mathrm{e},\pm}(X, Y)   = \beta^\pm(k, \kappa)  +  \rho^\pm(\kappa,\varepsilon; X,Y)$,
where $\beta^\pm$ is defined in \eqref{beta_pm} and $\rho^\pm = O(\varepsilon)$.
In particular, when $\kappa=0$, it follows that
$$ \beta^\pm(k, 0) =\hat\beta(k) :=  -\frac{i}{d} \cdot \frac{1}{\zeta_0(k)}  - \sum_{n=1}^{\infty} \frac{2}{\sqrt{(2\pi n)^2-(kd)^2}} \cos(\kappa_n d_0), $$  
and the high-order terms take the form of
$$  \rho^\pm(0,\varepsilon; X,Y) =  \sum_{n=1}^{\infty} \frac{2}{\sqrt{(2\pi n)^2-(kd)^2}} \Big( \cos(\kappa_n d_0) - \cos(\kappa_n (\pm d_0+\varepsilon(X-Y)) \Big). $$
From the above expressions, it is seen that $\rho^\pm(0,\varepsilon; X,Y) = \hat \rho( |\pm d_0+\varepsilon(X-Y)|)$ for some real-valued function $\hat\rho$.   
\end{proof}

\medskip

\begin{rem}\label{rmk:green_ext_asy}
Asymptotic expansions can be obtained for $(\kappa,k) \not\in D_1$~\cite{lin_zhang18_1}.  In this case, $\hat r_\mathrm{e}$ and $\hat\rho$ in \eqref{eq:beta_kappa0} are no longer real valued.
\end{rem}

\medskip

\begin{lemma} \label{lem:green_int_asy} 
If $k\varepsilon \ll 1$, then
\begin{eqnarray}\label{eq:Gi_asy}
\hspace*{25pt} 
G_\varepsilon^\mathrm{i}(X, Y) 
 &=& \beta_\mathrm{i}(k ,\varepsilon) + \frac{1}{\pi} \left[ \ln \abs{\sin \left(\frac{\pi(X-Y)}{2}\right)} + \ln \abs{\sin \left(\frac{\pi(X+Y+1)}{2}\right)} \right] \nonumber\\
  && +\; r_{\mathrm{i},1}(\varepsilon; |X-Y|)  +  r_{\mathrm{i},2}(\varepsilon; |X+Y+1|).     \label{Gi_exp}  \\
\hspace*{25pt}
\tilde G_\varepsilon^\mathrm{i}(X, Y) &= &  \tilde \beta(k,\varepsilon) +  \tilde r_{\mathrm{i},1}(\varepsilon; |X-Y|) + \tilde r_{\mathrm{i},2}(\varepsilon; |X+Y+1|).
 \label{tGi_exp} 
\end{eqnarray}
Here $r_{\mathrm{i},1}$, $r_{\mathrm{i},2}$, $\tilde r_{\mathrm{i},1}$, and $\tilde r_{\mathrm{i},2}$ are bounded and real functions with
$r_{\mathrm{i},1}\sim O(\varepsilon^2)$, $r_{\mathrm{i},2} \sim O(\varepsilon^2)$, $\tilde r_{\mathrm{i},1} \sim O(e^{-1/\varepsilon})$, 
and $\tilde r_{\mathrm{i},2} \sim O(e^{-1/\varepsilon})$ for all $X, Y\in I$.
In addition, there holds 
\begin{equation}\label{eq:r12}
r_{\mathrm{i},1}(\varepsilon;t+2) =  r_{\mathrm{i},1}(\varepsilon;t), \quad \tilde r_{\mathrm{i},2}(\varepsilon;t+2) =  \tilde r_{\mathrm{i},2}(\varepsilon;t) \quad \mbox{for} \; 0\le t<2.
\end{equation}
\end{lemma}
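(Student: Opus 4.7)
The plan is to evaluate the doubly-infinite series defining $G_\varepsilon^\mathrm{i}$ and $\tilde G_\varepsilon^\mathrm{i}$ by summing first over $n$ in closed form via classical Mittag-Leffler identities, and then extracting the leading asymptotics from the outer $m$-sum by approximating $\lambda_m := \sqrt{(m\pi/\varepsilon)^2 - k^2}$ by $m\pi/\varepsilon$.

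\textbf{Step 1 (inner sum in $n$).} For each fixed $m$, I would collapse the $n$-sum using the partial-fraction expansions
\begin{equation*}
\cot k = \tfrac{1}{k} + \sum_{n\geq1} \tfrac{2k}{k^2-(n\pi)^2}, \qquad
\csc k = \tfrac{1}{k} + \sum_{n\geq1} \tfrac{2k(-1)^n}{k^2-(n\pi)^2},
\end{equation*}
together with their hyperbolic analogues ($\coth\lambda$, $\operatorname{csch}\lambda$) applied with $\lambda=\lambda_m$. Combined with the coefficients $a_{mn}$ this yields
\begin{equation*}
G_\varepsilon^\mathrm{i}(X,Y) = \tfrac{\cot k}{\varepsilon k} - \tfrac{2}{\varepsilon}\sum_{m\geq1} \tfrac{\coth\lambda_m}{\lambda_m}\cos(m\pi(X+\tfrac12))\cos(m\pi(Y+\tfrac12)),
\end{equation*}
with the analogous formula for $\tilde G_\varepsilon^\mathrm{i}$ obtained by replacing $\coth$ with $\operatorname{csch}$ and $\cot k$ with $\csc k$. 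This already produces the prefactors $\frac{1}{\varepsilon k\tan k}$ and $\frac{1}{\varepsilon k\sin k}$ coming from $m=0$.

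\textbf{Step 2 (leading asymptotics of the $m$-sum).} I would decompose $\coth\lambda_m/\lambda_m = 1/\lambda_m + (\coth\lambda_m-1)/\lambda_m$, note that the second piece is $O(e^{-2\lambda_m}) = O(e^{-2m\pi/\varepsilon})$, and Taylor-expand $1/\lambda_m = \varepsilon/(m\pi) + O(\varepsilon^3/m^3)$. The product-to-sum identity
\begin{equation*}
2\cos(m\pi(X+\tfrac12))\cos(m\pi(Y+\tfrac12)) = \cos(m\pi(X-Y)) + \cos(m\pi(X+Y+1))
\end{equation*}
cleanly separates the dependence on $|X-Y|$ and $|X+Y+1|$. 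The leading coefficient $\varepsilon/(m\pi)$ then feeds into the classical Clausen series $\sum_{m\geq1}\cos(m\theta)/m = -\ln|2\sin(\theta/2)|$ applied with $\theta=\pi(X-Y)$ and $\theta=\pi(X+Y+1)$, producing simultaneously the two logarithmic singularities of \eqref{Gi_exp} and the constant $2\ln 2/\pi$ included in $\beta_\mathrm{i}$.

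\textbf{Step 3 (error bookkeeping and structure of the remainders).} What remains for $G_\varepsilon^\mathrm{i}$ is a pair of Fourier cosine series, one in $\cos(m\pi(X-Y))$ and one in $\cos(m\pi(X+Y+1))$, whose coefficients are $O(\varepsilon^2/m^3)$ plus an exponentially small contribution from $\coth\lambda_m-1$; summed absolutely these give $r_{\mathrm{i},1},r_{\mathrm{i},2} = O(\varepsilon^2)$ uniformly in $X,Y\in I$. For $\tilde G_\varepsilon^\mathrm{i}$ the factor $\operatorname{csch}\lambda_m = O(e^{-\lambda_m})$ is itself exponentially small, and after dividing by $\varepsilon$ one obtains $\tilde r_{\mathrm{i},1},\tilde r_{\mathrm{i},2} = O(e^{-1/\varepsilon})$. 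Reality of all four remainders is immediate because their cosine-series coefficients are real, while the 2-periodicity $r_{\mathrm{i},1}(\varepsilon;t+2)=r_{\mathrm{i},1}(\varepsilon;t)$ and $\tilde r_{\mathrm{i},2}(\varepsilon;t+2)=\tilde r_{\mathrm{i},2}(\varepsilon;t)$ follows because each term $\cos(m\pi t)$ has fundamental period $2$.

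\textbf{Main obstacle.} The delicate point is the polynomial error bound in Step 3: the $m=1$ term contributes $1/\lambda_1 - \varepsilon/\pi = O(\varepsilon^3)$, which after the $1/\varepsilon$ prefactor gives exactly $O(\varepsilon^2)$ and no better, so one must verify that the choice of $\beta_\mathrm{i}$ absorbs precisely the right $\varepsilon$-independent constant. The argument also requires uniformity over $(X,Y)\in I\times I$, which is guaranteed by the absolute convergence of $\sum m^{-3}|\cos(m\pi t)|$ but should be checked separately from the exponential tail generated by $\coth\lambda_m-1$. The remaining identification of $r_{\mathrm{i},1}$ with a function of $|X-Y|$ alone (and similarly for the other remainders) is transparent once the cosine series is in hand, since each series depends only on the stated combination of $X,Y$ and is even in it.
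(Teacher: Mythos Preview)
Your approach is correct and complete. The paper itself omits this proof entirely, stating only that it ``follows the same lines as Lemma~3.1 of~\cite{lin_zhang17}''; the method you describe---summing the $n$-series in closed form via the Mittag-Leffler expansions of $\cot$, $\csc$, $\coth$, $\operatorname{csch}$, then extracting the logarithmic singularity from the $m$-sum through the Clausen identity $\sum_{m\geq1}\cos(m\theta)/m=-\ln|2\sin(\theta/2)|$---is precisely the standard computation for such slit Green functions and is almost certainly what that reference does.

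Your ``main obstacle'' is not actually an obstacle: the constant $\tfrac{2\ln2}{\pi}$ in $\beta_\mathrm{i}$ arises automatically, since applying the Clausen identity with $\theta=\pi(X-Y)$ and $\theta=\pi(X+Y+1)$ yields $-\ln|2\sin(\cdot)| = -\ln2 - \ln|\sin(\cdot)|$ twice, and the two $-\ln2$ terms (multiplied by $-1/\pi$) give exactly $\tfrac{2\ln2}{\pi}$. No matching is required. The uniformity in $(X,Y)$ and the $2$-periodicity of the remainders follow immediately from absolute convergence and the form $\sum_m c_m\cos(m\pi t)$, as you note.
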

The proof follows the same lines as Lemma 3.1 of \cite{lin_zhang17}, and we omit it. \\

Define the kernels
\begin{eqnarray}
&& \hspace*{35pt} \rho(X, Y) = \frac{1}{\pi} \left[ \ln |X-Y| + \ln \abs{\sin \left(\frac{\pi(X-Y)}{2}\right)}+ \ln\abs{\sin \left(\frac{\pi(X+Y+1)}{2}\right)}\right], \label{eq:rho} \\
&& \hspace*{35pt}  \rho_{\infty}(\kappa; X,Y) =  r_\mathrm{e}(\kappa,\varepsilon;X,Y) + r_{\mathrm{i},1}(\varepsilon; |X-Y|) + r_{\mathrm{i},2}(\varepsilon;|X+Y+1|),  \label{eq:rho_infty} \\
&& \hspace*{35pt} \tilde\rho_{\infty}(X,Y) =  \tilde r_{\mathrm{i},1}(\varepsilon; |X-Y|) + \tilde r_{\mathrm{i},2}(\varepsilon;|X+Y+1|),  \label{eq:rho_infty_tilde}
\end{eqnarray}
where $r_\mathrm{e}$,  $r_{\mathrm{i},1}$, $r_{\mathrm{i},2}$, $\tilde r_{\mathrm{i},1}$, and $\tilde r_{\mathrm{i},2}$ are given in \eqref{Ge_exp}, \eqref{Gi_exp}--\eqref{tGi_exp} respectively.
Let $S$, $S^{\infty}_\kappa$, $S^{\infty,\pm}_\kappa$ and $\tilde S^{\infty}$ be the integral operators defined over the interval $I$ and with the kernels $\rho(X,Y)$, $\rho_{\infty}(X,Y)$, $\rho^\pm(X,Y)$, and $\tilde \rho_\infty (X,Y)$:
\begin{eqnarray*}
&& [S \varphi](X) = \int_{I} \rho(X, Y)  \varphi(Y) \, dY,    \quad \quad \quad [S^{\infty}_\kappa \varphi](X) = \int_I  \rho_{\infty}(\kappa; X, Y)  \varphi(Y) \, dY; \\
&& [S^{\infty,\pm}_\kappa \varphi](X) = \int_I  \rho^\pm(X,Y)  \varphi(Y) \, dY,  \quad  [\tilde S^{\infty} \varphi](X) = \int_I   \tilde \rho_\infty (X,Y)  \varphi(Y) \, dY.
\end{eqnarray*}

\medskip

Define the functions spaces 
$$V_1 = \tilde H^{-\frac{1}{2}}(I):=\{ u=U|_{I}  \;\big{|}\;  U \in H^{-1/2}(\mathbf{R})\,\,  \mbox{and} \,\, supp \,U \subset \bar I   \} \quad \mbox{and} \quad V_2 =  H^{\frac{1}{2}}(I). $$
The following two lemmas hold for the operators defined above. The proof is provided in the appendix.
\begin{lemma} \label{lem:S-1}
If $\tilde{\varphi}(X)=\varphi(-X)$, then 
\begin{eqnarray*}
&& [S\tilde{\varphi}](X) = [S\varphi](-X),  \quad\quad\;\; [S^{\infty}_0\tilde{\varphi}](X) = [S^{\infty}_0\varphi](-X),  \\
&& [\tilde{S}^\infty\tilde{\varphi}](X) = [\tilde{S}^\infty\varphi](-X), \quad [S^{\infty,+}_0\tilde{\varphi}](X) = [S^{\infty,-}_0\varphi](-X).
\end{eqnarray*}
\end{lemma}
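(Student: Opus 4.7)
The plan is to reduce each of the four stated identities to a pointwise symmetry of the corresponding kernel, and then verify the symmetry by direct substitution. Concretely, for any integral operator $O$ with kernel $k(X,Y)$ on $I=(-\tfrac12,\tfrac12)$,
\begin{equation*}
  [O\tilde\varphi](X) \;=\; \int_I k(X,Y)\varphi(-Y)\,dY \;=\; \int_I k(X,-Y)\varphi(Y)\,dY,
\end{equation*}
where I used the change of variable $Y\mapsto -Y$, which preserves $I$ and is volume preserving. Comparing with $[O\varphi](-X)=\int_I k(-X,Y)\varphi(Y)\,dY$ (or, in the last case, with $[S^{\infty,-}_0\varphi](-X)$), each desired identity reduces to either $k(X,-Y)=k(-X,Y)$, i.e.\ invariance of $k$ under $(X,Y)\mapsto(-X,-Y)$, or, in the final case, to $\rho^+(0;X,-Y)=\rho^-(0;-X,Y)$.

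For the kernel $\rho(X,Y)$ of $S$, the summands $\ln|X-Y|$ and $\ln|\sin(\pi(X-Y)/2)|$ are manifestly invariant under the sign flip of both arguments. For the remaining term, $\ln|\sin(\pi(-X-Y+1)/2)|=\ln|\sin(\pi(X+Y-1)/2)|$ and $|\sin(\theta-\pi)|=|\sin\theta|$ with $\theta=\pi(X+Y+1)/2$ gives the required equality.

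For $S^{\infty}_0$ and $\tilde S^\infty$, I invoke Lemma \ref{lem:green_ext_asy}(i), which gives $r_\mathrm{e}(0,\varepsilon;X,Y)=\hat r_\mathrm{e}(|X-Y|)$, and Lemma \ref{lem:green_int_asy}, so that the summands depending on $|X-Y|$ are automatically invariant. The remaining terms, $r_{\mathrm{i},2}(\varepsilon;|X+Y+1|)$ and $\tilde r_{\mathrm{i},2}(\varepsilon;|X+Y+1|)$, have to be matched with their sign-flipped counterparts $r_{\mathrm{i},2}(\varepsilon;|X+Y-1|)$ and $\tilde r_{\mathrm{i},2}(\varepsilon;|X+Y-1|)$. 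For $X,Y\in I$ one has $|X+Y+1|+|X+Y-1|=2$, so periodicity of period $2$ in \eqref{eq:r12}, combined with evenness of the underlying function (which is apparent from the cosine expansion $\cos(m\pi(X+\tfrac12))\cos(m\pi(Y+\tfrac12))=\tfrac12[\cos(m\pi(X-Y))+\cos(m\pi(X+Y+1))]$ used in constructing $G^\mathrm{i}_\varepsilon$), yields the required equality. Finally, for $S^{\infty,\pm}_0$ the explicit form $\rho^\pm(0,\varepsilon;X,Y)=\hat\rho(|\pm d_0+\varepsilon(X-Y)|)$ from Lemma \ref{lem:green_ext_asy}(i) makes the identity trivial: both $\rho^+(0;X,-Y)$ and $\rho^-(0;-X,Y)$ equal $\hat\rho(|d_0+\varepsilon(X+Y)|)$ since $|t|=|-t|$.

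The only step requiring a moment's thought is the $r_{\mathrm{i},2}$ matching, since \eqref{eq:r12} literally asserts only the periodicity and not the evenness; however, evenness follows immediately from the cosine representation used to derive Lemma \ref{lem:green_int_asy}, so this is a minor bookkeeping point rather than a genuine obstacle. Once that is cleared, the four identities are all direct consequences of the change-of-variable computation above.
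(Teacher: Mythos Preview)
Your proof is correct and follows essentially the same route as the paper's: reduce each identity to invariance of the kernel under $(X,Y)\mapsto(-X,-Y)$ via the change of variable $Y\mapsto -Y$, then verify that invariance using the explicit forms in Lemmas~\ref{lem:green_ext_asy}(i) and~\ref{lem:green_int_asy} together with the periodicity in~\eqref{eq:r12}. Your treatment is in fact slightly more careful than the paper's, since you explicitly note that periodicity alone does not give $r_{\mathrm{i},2}(\varepsilon;t)=r_{\mathrm{i},2}(\varepsilon;2-t)$ and supply the needed evenness from the cosine product-to-sum expansion; the paper simply asserts the parallel derivation goes through.
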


\vspace*{-10pt}

\begin{lemma} \label{lem:S-2}
The following holds for the operators $S$, $S^{\infty}_\kappa$, and $\tilde{S}^\infty$:
\begin{enumerate}
\item[(1)]
The operator $S$ is bounded from $V_1$ to $V_2$ with a bounded inverse.
Moreover,
\begin{equation}\label{alpha}
\alpha:=\langle S^{-1} 1, 1 \rangle_{L^2(I)} \neq 0.
\end{equation}
\item[(2)]
The operator $S+S^{\infty}_\kappa+\tilde S^{\infty}$ is invertible for small $\varepsilon$.
Let $\varphi$ and $\tilde\varphi$ be the solution of $$(S+S^{\infty}_0+\tilde S^{\infty})\varphi = g \quad \mbox{and} \quad (S+S^{\infty}_0+\tilde S^{\infty})\tilde{\varphi}=\tilde{g}$$ respectively,
where $\tilde{g}(X)=g(-X)$, then it holds that $\tilde{\varphi}(X)=\varphi(-X)$. The same holds for the operator $S+S^{\infty}_\kappa-\tilde S^{\infty}$.
\end{enumerate}
\end{lemma}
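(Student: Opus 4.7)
\textbf{Plan for Lemma~\ref{lem:S-2}.} The statement splits into two independent parts: invertibility of the principal operator $S$ together with the non-degeneracy $\alpha\ne 0$ (Part~(1)), and perturbative invertibility and reflection symmetry of $S+S^{\infty}_\kappa\pm\tilde S^\infty$ (Part~(2)).

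For Part~(1), the first step is to extract the diagonal logarithmic singularity from the kernel \eqref{eq:rho}. Using $\sin(\pi z/2)=(\pi z/2)(1+O(z^2))$ as $z\to 0$, one can write $\rho(X,Y)=\tfrac{2}{\pi}\ln|X-Y|+k(X,Y)$, where $k(X,Y)$ is smooth on $I\times I$ apart from integrable log singularities at the two corners $(\pm\tfrac12,\pm\tfrac12)$ coming from $\ln|\sin(\pi(X+Y+1)/2)|$. Hence $S=2S_0+K$, where $S_0$ is the classical logarithmic single-layer on $I$ -- an isomorphism $V_1\to V_2$ since the logarithmic capacity of $I$ equals $\tfrac14\ne 1$ -- and $K\colon V_1\to V_2$ is compact. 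By the Fredholm alternative, invertibility reduces to $\ker S=\{0\}$. I would establish this by identifying $\langle S\varphi,\varphi\rangle$ with a sum of two nonnegative Dirichlet energies: the first is the energy of the harmonic potential generated by $\varphi$ in the upper half-plane (coming from the $\tfrac{1}{\pi}\ln|X-Y|$ contribution of $g^{\mathrm{e}}$), and the second is the Dirichlet energy of the harmonic extension of $\varphi$ into the semi-infinite Neumann strip $(-\tfrac12,\tfrac12)\times(0,\infty)$ (coming from the $\ln|\sin|$ sum arising from the method of images applied to $g^{\mathrm{i}}$). Vanishing of both energies forces $\varphi=0$. The same positive-definiteness yields $\alpha>0$: since $S$ is symmetric with real kernel and positive on $V_1$, so is $S^{-1}$, and hence $\alpha=\langle S^{-1}1,1\rangle>0$, interpreted as the $V_1$--$V_2$ duality pairing (which reduces to the stated $L^2$ pairing once elliptic regularity for $S\psi=1$ places $\psi\in L^2(I)$).

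For Part~(2), invertibility of $S+S^{\infty}_\kappa\pm\tilde S^\infty$ follows by a Neumann-series perturbation around $S$. The kernel estimates in Lemmas~\ref{lem:green_ext_asy} and \ref{lem:green_int_asy} give $\|S^{\infty}_\kappa\|_{V_1\to V_2}=O(\varepsilon^2|\ln\varepsilon|+|\kappa|\varepsilon)$ and $\|\tilde S^\infty\|_{V_1\to V_2}=O(e^{-1/\varepsilon})$, so for $\varepsilon$ and $|\kappa|$ sufficiently small, $\|S^{-1}(S^{\infty}_\kappa\pm\tilde S^\infty)\|<1$ and the factorization
\[
S+S^{\infty}_\kappa\pm\tilde S^\infty=S\bigl(I+S^{-1}(S^{\infty}_\kappa\pm\tilde S^\infty)\bigr)
\]
yields the desired inverse. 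The symmetry claim is then immediate from Lemma~\ref{lem:S-1}: letting $R\varphi(X):=\varphi(-X)$, that lemma shows that $R$ commutes with each of $S$, $S^{\infty}_0$, and $\tilde S^\infty$, hence with their linear combination. Applying $R$ to the equation $(S+S^{\infty}_0\pm\tilde S^\infty)\varphi=g$ produces $(S+S^{\infty}_0\pm\tilde S^\infty)(R\varphi)=\tilde g$, and uniqueness gives $R\varphi=\tilde\varphi$, i.e., $\tilde\varphi(X)=\varphi(-X)$.

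The main obstacle I anticipate is the rigorous energy identification in Part~(1): one must recognize each piece of $\rho$ as the aperture-trace of a Green function (half-plane for the first piece, semi-infinite Neumann strip obtained by unfolding the periodic and reflection images for the $\ln|\sin|$ sum) and verify that $\langle S\varphi,\varphi\rangle$ really reproduces the corresponding Dirichlet integrals when $\varphi\in V_1=\tilde H^{-1/2}(I)$, where care is needed because $\varphi$ is only a distribution. Once this positive-definiteness is secured, both the injectivity of $S$ and the sign of $\alpha$ follow from standard properties of single-layer operators, and Part~(2) is a short perturbation-plus-symmetry argument.
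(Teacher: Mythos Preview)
Your Part~(2) is exactly the paper's argument: invertibility by a Neumann series around $S$ using the smallness bounds from Proposition~\ref{prop:decomp_op}, then the reflection identity by commuting $R\varphi(X):=\varphi(-X)$ through $S+S_0^\infty\pm\tilde S^\infty$ via Lemma~\ref{lem:S-1} and invoking uniqueness.

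For Part~(1) the paper does not give an argument at all---it simply cites \cite{eric10} (Bonnetier--Triki). Your outline therefore goes further than the paper. The Fredholm decomposition $S=2S_0+K$ with $S_0$ the logarithmic single-layer on $I$ is correct, and the logarithmic-capacity criterion ($\mathrm{cap}(I)=\tfrac14\ne 1$) indeed gives that $S_0\colon V_1\to V_2$ is an isomorphism. One caution on your energy argument: with the kernel normalized as $+\tfrac{1}{\pi}\ln|X-Y|$ (rather than the standard $-\tfrac{1}{2\pi}\ln$), the quadratic form $\langle S\varphi,\varphi\rangle$ equals the \emph{negative} of a sum of Dirichlet energies, so $S$ is negative-definite and one gets $\alpha<0$, not $\alpha>0$. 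This is harmless for the lemma, which only asserts $\alpha\ne 0$, but your sign claim should be dropped. Your identification of the $\ln|\sin|$ terms with a semi-infinite Neumann strip is the right picture; the two terms together are the boundary trace of the Green function for the half-strip $(-\tfrac12,\tfrac12)\times(0,\infty)$ with Neumann conditions on all three sides, and making that identification precise (and pairing it correctly in the $V_1$--$V_2$ duality) is exactly the work that \cite{eric10} carries out.
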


\medskip

We define the projection operator $P: V_1 \to V_2$ by
$$
P \varphi(X) = \langle \varphi, 1 \rangle 1,
$$
where $1$ is a function defined on the interval $I$ and is equal to one therein.  
We are ready to present the decomposition of the integral operators $T^\mathrm{e}+T^\mathrm{i} $, $T^{\mathrm{e},\pm}$, and $\tilde T^\mathrm{i} $ using the asymptotic expansion of the Green functions
in Lemmas \ref{lem:green_ext_asy} and \ref{lem:green_int_asy}.

\begin{proposition} \label{prop:decomp_op}
Let $k \in (0, 2\pi/d)$.
The operators $T^\mathrm{e}+T^\mathrm{i} $, $T^{\mathrm{e},\pm}$, and $\tilde T^\mathrm{i}$ admit the decompositions
$$T^\mathrm{e}+T^\mathrm{i}  = \beta P + S + S^\infty_\kappa, \quad T^{\mathrm{e},\pm} = \beta^\pm P + S^{\infty,\pm}_\kappa, \quad \tilde T^\mathrm{i} = \tilde\beta P + \tilde S^{\infty}, $$
where $S_\kappa^\infty$, $S^{\infty,\pm}_\kappa$ and $\tilde S^{\infty}$ are bounded from $V_1$ to $V_2$ with the operator norm
$$\| S_\kappa^{\infty}\|  \lesssim \varepsilon, \quad  \| S^{\infty,\pm}_\kappa\|  \lesssim \varepsilon,
\quad \mbox{and} \quad \| \tilde S^{\infty}\|  \lesssim e^{-1/\varepsilon} $$
uniformly in $\kappa$.
\end{proposition}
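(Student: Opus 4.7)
The plan is to substitute the asymptotic kernel expansions from Lemmas~\ref{lem:green_ext_asy} and \ref{lem:green_int_asy} directly into the definitions of the four integral operators, and then read off the decompositions term by term. The only non-trivial points are checking that the constant parts of the kernels give rise to multiples of the projection $P$, and that the pointwise estimates on the remainders can be upgraded to operator-norm bounds from $V_1$ to $V_2$.

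For $T^\mathrm{e}+T^\mathrm{i}$, I would add the expansions
$$ G_\varepsilon^\mathrm{e}(X,Y) \,=\, \beta_\mathrm{e} + \tfrac{1}{\pi}\ln|X-Y| + r_\mathrm{e}(\kappa,\varepsilon;X,Y) $$
and
$$ G_\varepsilon^\mathrm{i}(X,Y) \,=\, \beta_\mathrm{i} + \tfrac{1}{\pi}\!\left[\ln\!\bigl|\sin\tfrac{\pi(X-Y)}{2}\bigr| + \ln\!\bigl|\sin\tfrac{\pi(X+Y+1)}{2}\bigr|\right] + r_{\mathrm{i},1} + r_{\mathrm{i},2}. $$
The constant parts sum to $\beta_\mathrm{e}+\beta_\mathrm{i}=\beta$, and the associated constant-kernel integral operator sends $\varphi$ to $\beta\langle\varphi,1\rangle\cdot 1 = \beta P\varphi$. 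The three logarithmic terms are precisely the kernel $\rho(X,Y)$ defining $S$ in \eqref{eq:rho}, and the three smooth remainders sum to the kernel $\rho_\infty(\kappa;X,Y)$ defining $S_\kappa^\infty$ in \eqref{eq:rho_infty}. The other two identities are similar but simpler: $T^{\mathrm{e},\pm}$ has kernel $\beta^\pm + \rho^\pm$ and $\tilde T^\mathrm{i}$ has kernel $\tilde\beta + \tilde r_{\mathrm{i},1} + \tilde r_{\mathrm{i},2}$, so peeling off the constant coefficient produces $\beta^\pm P$ (resp.\ $\tilde\beta P$) and the smooth remainder defines $S_\kappa^{\infty,\pm}$ (resp.\ $\tilde S^\infty$).

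For the operator-norm estimates, the key observation is that each remainder kernel $\rho_\infty(\kappa;\cdot,\cdot)$, $\rho^\pm(\kappa,\varepsilon;\cdot,\cdot)$, and $\tilde\rho_\infty(\cdot,\cdot)$ is in fact a smooth function of $(X,Y)\in\overline{I}\times\overline{I}$ once the logarithmic singularities have been extracted, as can be seen by inspecting the Rayleigh-Bloch and eigenfunction series appearing in the proof of Lemma~\ref{lem:green_ext_asy}: after subtraction of $\tfrac{1}{\pi}\ln|X-Y|$ the remaining series converges absolutely together with all its derivatives. A smooth kernel $K$ on $\overline I\times\overline I$ defines an integral operator whose $V_1\to V_2$ norm is controlled by an appropriate $C^k$ norm of $K$. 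Combining this with the pointwise bounds $r_\mathrm{e}=O(\varepsilon^2|\ln\varepsilon|+|\kappa|\varepsilon)$, $r_{\mathrm{i},j}=O(\varepsilon^2)$, $\rho^\pm=O(\varepsilon)$, and $\tilde r_{\mathrm{i},j}=O(e^{-1/\varepsilon})$ from Lemmas~\ref{lem:green_ext_asy} and \ref{lem:green_int_asy} yields the advertised bounds $\|S_\kappa^\infty\|,\|S_\kappa^{\infty,\pm}\|\lesssim\varepsilon$ and $\|\tilde S^\infty\|\lesssim e^{-1/\varepsilon}$, uniformly in $\kappa$ on a neighborhood of the origin.

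The main obstacle in a fully rigorous write-up is verifying that the pointwise $O(\cdot)$ estimates in Lemmas~\ref{lem:green_ext_asy}--\ref{lem:green_int_asy} hold in a $C^k$ sense uniformly in $\kappa$ and $\varepsilon$; this requires revisiting the term-by-term expansions of the Fourier-Bloch series in those lemmas and confirming that differentiation in $X,Y$ costs only harmless factors that do not worsen the stated orders in $\varepsilon$ or the uniformity in $\kappa$. Once that smoothness is in hand, the passage from pointwise control to $V_1\to V_2$ operator-norm control is a standard Sobolev embedding argument for smoothing integral operators on a bounded interval.
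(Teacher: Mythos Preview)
Your proposal is correct and matches the paper's implicit approach: the paper gives no proof for this proposition, as the decompositions hold by construction once the kernels $\rho$, $\rho_\infty$, $\rho^\pm$, $\tilde\rho_\infty$ are defined (equations \eqref{eq:rho}--\eqref{eq:rho_infty_tilde}) to be exactly the pieces left over from Lemmas~\ref{lem:green_ext_asy} and~\ref{lem:green_int_asy} after extracting the constants $\beta$, $\beta^\pm$, $\tilde\beta$. Your observation that the pointwise $O(\cdot)$ remainder bounds must be upgraded to $C^k$-type control to obtain $V_1\to V_2$ operator-norm estimates is a point of rigor the paper leaves implicit; it is indeed handled by termwise differentiation of the absolutely convergent Rayleigh--Bloch and eigenfunction series, which introduces only polynomial-in-$n$ factors and preserves the stated orders in~$\varepsilon$.
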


\section{Embedded eigenvalues and resonances}\label{sec:eig_res}

Define the singular frequencies of the scattering problem as the eigenvalues and resonances for the homogeneous problem.  Precisely, these are the $k$-values of pairs $(\kappa,k)$ for which the system (\ref{eq:Helmholtz}--\ref{eq:field2}) with the incident field $u^\mathrm{inc}$ removed has a nonzero solution, or, equivalently, pairs for which the system (\ref{eq:optT}) has a nonzero solution with~$\mathbf{f}=0$.  Eigenvalues are real values of $k$, whereas resonances are complex values of~$k$.
The field (eigenmode) corresponding to an eigenvalue decays exponentially above the grating, whereas the field (quasi-mode) corresponding to a resonance grows exponentially above the grating.

\subsection{The conditions for eigenvalues and resonances}\label{sec:res_cond}

In this section, we establish the condition for the singular frequencies.
From the previous discussions, we have seen that they are equal to the characteristic values $k$ of the system of integral operators~\eqref{eq:scattering2}.
When reduced to functions on the scaled interval~$I$, this amounts to finding those frequencies $k$ such that $\mathbb{T}\, \boldsymbol{\varphi}=0$ admits a nonzero solution
in~$(V_1)^4$.
Recall that 
\begin{equation*}
\mathbb{T}=\left[
\begin{array}{cc}
\hat{\mathbb{T}}  & \tilde{\mathbb{T}}  \\
\tilde{\mathbb{T}}  & \hat{\mathbb{T}}
\end{array}
\right], \quad\mbox{where} \;
\hat{\mathbb{T}}=\left[
\begin{array}{cc}
T^\mathrm{e}+T^\mathrm{i}    &  T^{\mathrm{e},-}   \\
 T^{\mathrm{e},+}     &  T^\mathrm{e}+T^\mathrm{i}  
\end{array}
\right]
\; \mbox{and} \;
\tilde{\mathbb{T}}=\left[
\begin{array}{cc}
 \tilde T^\mathrm{i} & 0  \\
 0 & \tilde T^\mathrm{i}  \\ 
\end{array}
\right].
\end{equation*}
We may decompose the set of its characteristic values as follows.

\begin{lemma}
Let $\mathbb{T}_+=\hat{\mathbb{T}}+\tilde{\mathbb{T}}$ and $\mathbb{T}_-=\hat{\mathbb{T}}-\tilde{\mathbb{T}}$.
Then
$$\sigma(\mathbb{T})=\sigma\big(\mathbb{T}_+\big)\, \cup \, \sigma\big(\mathbb{T}_-\big), $$
where $\sigma(\mathbb{T})$, $\sigma(\mathbb{T}_+)$ and $ \sigma(\mathbb{T}_-)$ denote the sets of
characteristic frequencies $k$ of $\mathbb{T}$, $\mathbb{T}_+$ and $\mathbb{T}_-$, respectively.
\end{lemma}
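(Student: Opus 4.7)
The plan is to exploit the symmetric block structure of $\mathbb{T}$ by performing an elementary change of variables that decouples the system into two independent sub-systems governed by $\mathbb{T}_+$ and $\mathbb{T}_-$.

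First, I would write a vector $\boldsymbol{\varphi} \in (V_1)^4$ in block form as $\boldsymbol{\varphi} = (\boldsymbol{\phi}_1, \boldsymbol{\phi}_2)^T$ with $\boldsymbol{\phi}_1 = (\varphi_1^-, \varphi_1^+)^T$ and $\boldsymbol{\phi}_2 = (\varphi_2^-, \varphi_2^+)^T$, so that the homogeneous equation $\mathbb{T}\boldsymbol{\varphi} = 0$ becomes the pair
\begin{equation*}
\hat{\mathbb{T}}\boldsymbol{\phi}_1 + \tilde{\mathbb{T}}\boldsymbol{\phi}_2 = 0, \qquad \tilde{\mathbb{T}}\boldsymbol{\phi}_1 + \hat{\mathbb{T}}\boldsymbol{\phi}_2 = 0.
\end{equation*}
Adding and subtracting these two equations yields
\begin{equation*}
\mathbb{T}_+(\boldsymbol{\phi}_1 + \boldsymbol{\phi}_2) = 0, \qquad \mathbb{T}_-(\boldsymbol{\phi}_1 - \boldsymbol{\phi}_2) = 0,
\end{equation*}
and conversely, given any pair $(\boldsymbol{\psi}_+, \boldsymbol{\psi}_-)$ satisfying these two decoupled equations, setting $\boldsymbol{\phi}_1 = \tfrac{1}{2}(\boldsymbol{\psi}_+ + \boldsymbol{\psi}_-)$, $\boldsymbol{\phi}_2 = \tfrac{1}{2}(\boldsymbol{\psi}_+ - \boldsymbol{\psi}_-)$ recovers a solution to $\mathbb{T}\boldsymbol{\varphi} = 0$. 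Equivalently, this is just the similarity transformation by $P = \bigl(\begin{smallmatrix} I & I \\ I & -I \end{smallmatrix}\bigr)$, which conjugates $\mathbb{T}$ to the block-diagonal operator $\mathrm{diag}(\mathbb{T}_+, \mathbb{T}_-)$.

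For the forward inclusion $\sigma(\mathbb{T}) \subset \sigma(\mathbb{T}_+) \cup \sigma(\mathbb{T}_-)$, given $k \in \sigma(\mathbb{T})$, choose a nonzero $\boldsymbol{\varphi}$ in the kernel. Then at least one of $\boldsymbol{\phi}_1 + \boldsymbol{\phi}_2$ and $\boldsymbol{\phi}_1 - \boldsymbol{\phi}_2$ must be nonzero, placing $k$ in $\sigma(\mathbb{T}_+) \cup \sigma(\mathbb{T}_-)$. For the reverse inclusion, if $k \in \sigma(\mathbb{T}_+)$ with kernel vector $\boldsymbol{\psi}$, then $(\boldsymbol{\psi}, \boldsymbol{\psi})^T$ is a nonzero element of $\ker \mathbb{T}$; similarly $(\boldsymbol{\psi}, -\boldsymbol{\psi})^T$ handles the case $k \in \sigma(\mathbb{T}_-)$.

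There is no real obstacle here — the argument is purely algebraic and relies only on the commutation of the diagonal and anti-diagonal blocks of $\mathbb{T}$ (each block equals itself across the diagonal), which is automatic from the given form. The only point worth mentioning is that this decomposition corresponds physically to decoupling into symmetric and antisymmetric modes about the centerline of each period, consistent with the heuristic discussion in the introduction; this symmetry reduction is precisely what enables the existence of embedded eigenvalues at $\kappa = 0$, which is the subject of the subsequent theorems.
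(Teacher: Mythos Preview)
Your proof is correct and essentially identical to the paper's: the paper phrases the same argument as a decomposition $(V_1)^4 = V_\mathrm{even} \oplus V_\mathrm{odd}$ into $\mathbb{T}$-invariant subspaces of vectors $[\varphi_-,\varphi_+,\varphi_-,\varphi_+]^T$ and $[\varphi_-,\varphi_+,-\varphi_-,-\varphi_+]^T$, which is exactly your block similarity by $P=\bigl(\begin{smallmatrix} I & I\\ I & -I\end{smallmatrix}\bigr)$ viewed from the other side.
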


\begin{rem}\label{rmk:spectrum_op_T}
Such a decomposition of the spectrum follows from the symmetry of the grating geometry. In fact,
it can be shown that $\sigma\big(\mathbb{T}_+\big)$ (and $\sigma\big(\mathbb{T}_-\big)$ respectively) corresponds to the resonances
of the scattering problem where the lower half of the structure is replaced by a perfect conductor and 
the Neumann (Dirichlet) boundary condition is imposed over the lower slit aperture.
\end{rem}

\begin{proof} The function space $(V_1)^4$ can be decomposed as $(V_1)^4=V_\mathrm{even} \oplus V_\mathrm{odd}$,
where
$ V_\mathrm{even} = \{ \,[ \varphi_-, \varphi_+,  \varphi_-, \varphi_+ ]^T \,;\, \varphi_\pm \in V_1 \}  $
and
$ V_\mathrm{odd} = \{ \,[ \varphi_-, \varphi_+,  -\varphi_-, -\varphi_+ ]^T \,;\, \varphi_\pm \in V_1 \} $
are invariant spaces for~$\mathbb{T}$. Thus
$\sigma(\mathbb{T})= \sigma(\mathbb{T}|_{V_\mathrm{even}})\, \cup \, \sigma(\mathbb{T}|_{V_\mathrm{odd}})$.
Then observe that $\mathbb{T}[ \varphi_-, \varphi_+,  \varphi_-, \varphi_+ ]^T = [ \psi_-, \psi_+,  \psi_-, \psi_+ ]^T$,
with $\mathbb{T}_+[\varphi_-, \varphi_+]^T=[ \psi_-, \psi_+]^T$ so that $\sigma(\mathbb{T}|_{V_\mathrm{even}})=\sigma\big(\mathbb{T}_+\big)$, and similarly $\sigma(\mathbb{T}|_{V_\mathrm{odd}})=\sigma\big(\mathbb{T}_-\big)$.   
\end{proof}

\medskip

We now investigate the characteristic values of the operators $\mathbb{T}_+$ and $\mathbb{T}_-$.
They can be reduced to the roots of certain nonlinear functions.
We present the derivations for $\mathbb{T}_+$, and the derivations for $\mathbb{T}_-$
are parallel.

By defining the operators
\begin{equation*}
\mathbb{P}_\kappa= \left[
\begin{array}{cc}
(\beta+\tilde\beta) P    & \beta^- P \\
\beta^+ P  & (\beta+\tilde\beta) P 
\end{array}
\right],
\quad
\mathbb{S}^\infty_\kappa=
\left[
\begin{array}{cc}
S^\infty_\kappa + \tilde S^{\infty}    & S^{\infty,-}_\kappa \\
S^{\infty,+}_\kappa  & S^\infty_\kappa + \tilde S^{\infty}
\end{array}
\right],
\quad\mbox{and}\quad
\mathbb{L}_\kappa=S \mathbb{I}  +\mathbb{S}^\infty_\kappa,
\end{equation*}
and using the decomposition of the operators in Proposition~\ref{prop:decomp_op}, we obtain
$\mathbb{T}_+ = \mathbb{P}_\kappa + \mathbb{L}_\kappa$, and thus
$\mathbb{T}_+\, \boldsymbol{\varphi}=0$ becomes
\begin{equation}\label{eq:scattering4}
(\mathbb{P}_\kappa + \mathbb{L}_\kappa) \boldsymbol{\varphi} =   \mathbf{0}, 
\end{equation}
where $\boldsymbol{\varphi} = [\, \varphi^-,  \varphi^+ \,] ^T$.
Let $\{\mathbf{e}_j\}_{j=1}^2\in V_1 \times V_1 $  be given by $\mathbf{e}_1 = [1, 0]^T $ and $\mathbf{e}_2 = [0, 1]^T $.

\begin{lemma} \label{lem:L_inv} 
$\mathbb{L}_\kappa$ is invertible for sufficiently small  $\varepsilon$, and there holds
\begin{eqnarray*} 
&& \mathbb{L}_\kappa^{-1}  \mathbf{e}_1 = ( S^{-1}1 ) \cdot \mathbf{e}_1 + O(\varepsilon), \quad
\mathbb{L}_\kappa^{-1}  \mathbf{e}_2 = ( S^{-1}1 ) \cdot \mathbf{e}_2 + O(\varepsilon); \\
&& \langle \mathbb{L}_\kappa^{-1}  \mathbf{e}_1,  \mathbf{e}_1 \rangle = \alpha +  O(\varepsilon), \quad \quad
\langle \mathbb{L}_\kappa^{-1}  \mathbf{e}_1,  \mathbf{e}_2\rangle =O(\varepsilon)
\end{eqnarray*}
uniformly in $\kappa$, where $\alpha$ is defined in \eqref{alpha}.
\end{lemma}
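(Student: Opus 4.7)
The plan is to invert $\mathbb{L}_\kappa = S\mathbb{I} + \mathbb{S}^\infty_\kappa$ by a Neumann series argument, treating $S\mathbb{I}$ as the unperturbed piece and $\mathbb{S}^\infty_\kappa$ as a small perturbation of order $\varepsilon$. By Lemma \ref{lem:S-2}(1), $S: V_1 \to V_2$ is invertible, so $S\mathbb{I}: V_1 \times V_1 \to V_2 \times V_2$ is also invertible with inverse $S^{-1}\mathbb{I}$. Proposition \ref{prop:decomp_op} gives the uniform-in-$\kappa$ bounds $\|S^\infty_\kappa\|, \|S^{\infty,\pm}_\kappa\| \lesssim \varepsilon$ and $\|\tilde S^\infty\| \lesssim e^{-1/\varepsilon}$, so the block operator satisfies $\|\mathbb{S}^\infty_\kappa\| \lesssim \varepsilon$ as an operator $V_1\times V_1 \to V_2 \times V_2$.

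Next I would factor $\mathbb{L}_\kappa = (S\mathbb{I})\bigl(\mathbb{I} + (S^{-1}\mathbb{I})\,\mathbb{S}^\infty_\kappa\bigr)$ and note that $(S^{-1}\mathbb{I})\,\mathbb{S}^\infty_\kappa$ is bounded on $V_1\times V_1$ with norm $\lesssim \varepsilon$. For $\varepsilon$ sufficiently small the Neumann series converges uniformly in $\kappa$, giving invertibility and the expansion
\begin{equation*}
\mathbb{L}_\kappa^{-1} \;=\; \bigl(\mathbb{I} + (S^{-1}\mathbb{I})\,\mathbb{S}^\infty_\kappa\bigr)^{-1}\! (S^{-1}\mathbb{I}) \;=\; S^{-1}\mathbb{I} + O(\varepsilon),
\end{equation*}
where the remainder is bounded uniformly in $\kappa$.

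Applying this to $\mathbf{e}_1=[1,0]^T$ gives $\mathbb{L}_\kappa^{-1}\mathbf{e}_1 = [S^{-1}1, 0]^T + O(\varepsilon) = (S^{-1}1)\cdot\mathbf{e}_1 + O(\varepsilon)$, and likewise for $\mathbf{e}_2$. For the inner products, I would pair with $\mathbf{e}_1$ and $\mathbf{e}_2$ in the $L^2(I)\times L^2(I)$ sense: $\langle \mathbb{L}_\kappa^{-1}\mathbf{e}_1,\mathbf{e}_1\rangle = \langle S^{-1}1,1\rangle_{L^2(I)} + O(\varepsilon) = \alpha + O(\varepsilon)$ by the definition \eqref{alpha}, while $\langle \mathbb{L}_\kappa^{-1}\mathbf{e}_1,\mathbf{e}_2\rangle = \langle 0,1\rangle_{L^2(I)} + O(\varepsilon) = O(\varepsilon)$ because the leading term has vanishing second component.

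There is no serious obstacle here; the one technical point to handle carefully is that the various blocks of $\mathbb{S}^\infty_\kappa$ map $V_1 \to V_2$ rather than $V_1 \to V_1$, so the Neumann inversion must be set up on $V_1\times V_1$ by composing with $S^{-1}\mathbb{I}: V_2\times V_2 \to V_1\times V_1$; once this is done the estimate is immediate. The fact that all constants are uniform in $\kappa$ is inherited directly from the uniform-in-$\kappa$ norm bounds in Proposition \ref{prop:decomp_op}.
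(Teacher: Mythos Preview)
Your proposal is correct and follows essentially the same approach as the paper: invert $\mathbb{L}_\kappa = S\mathbb{I} + \mathbb{S}^\infty_\kappa$ via a Neumann series using the invertibility of $S$ from Lemma~\ref{lem:S-2} and the $O(\varepsilon)$ bounds from Proposition~\ref{prop:decomp_op}, then read off the asymptotics. Your write-up actually includes more detail than the paper's two-line proof, in particular the careful remark about composing with $S^{-1}\mathbb{I}$ to set up the Neumann series on $V_1\times V_1$ and the explicit computation of the inner products.
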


\begin{proof} By Lemma~\ref{lem:S-2}, $\mathbb{L}_\kappa$ is invertible for small enough  $\varepsilon$ via the Neumann series,
$$ \mathbb{L}_\kappa^{-1}  = \left(S\mathbb{I}+\mathbb{S}^\infty_\kappa\right)^{-1} = \left(\sum_{j=0}^\infty (-1)^j\left(S^{-1}\mathbb{S^\infty_\kappa} \right)^j \right)  S^{-1} = S^{-1}\mathbb{I} + O(\varepsilon). $$
The assertion holds from the definition \eqref{alpha} of the constant~$\alpha$.
\end{proof}

\medskip

Applying $\mathbb{L}_\kappa^{-1}$ on both sides of \eqref{eq:scattering4} yields
\begin{equation}\label{eq:scattering5}
\mathbb{L}_\kappa^{-1} \;\mathbb{P}_\kappa \; \boldsymbol{\varphi} + \boldsymbol{\varphi} = \mathbf{0},
\end{equation}
which, using the equation
 $$  \mathbb{P}_\kappa \; \boldsymbol{\varphi} = (\beta+\tilde \beta)  \langle  \boldsymbol{\varphi}, \mathbf{e}_1 \rangle \mathbf{e}_1  + \beta^- \langle  \boldsymbol{\varphi}, \mathbf{e}_2 \rangle \mathbf{e}_1  
 + \beta^+ \langle  \boldsymbol{\varphi}, \mathbf{e}_1 \rangle \mathbf{e}_2  + (\beta+\tilde \beta) \langle \boldsymbol{\varphi}, \mathbf{e}_2 \rangle \mathbf{e}_2,$$
can be expanded into
\begin{equation*}
(\beta+\tilde \beta) \langle  \boldsymbol{\varphi}, \mathbf{e}_1 \rangle  \mathbb{L}_\kappa^{-1}  \mathbf{e}_1  + 
\beta^- \langle  \boldsymbol{\varphi}, \mathbf{e}_2 \rangle \mathbb{L}_\kappa^{-1} \mathbf{e}_1  + \beta^+ \langle  \boldsymbol{\varphi}, \mathbf{e}_1 \rangle \mathbb{L}_\kappa^{-1} \mathbf{e}_2
+ (\beta+\tilde \beta)  \langle  \boldsymbol{\varphi}, \mathbf{e}_2 \rangle  \mathbb{L}_\kappa^{-1}  \mathbf{e}_2 +
\boldsymbol{\varphi} =   \mathbf{0}. 
\end{equation*}
Taking the $L^2$-inner product of the above equation with $\mathbf{e}_1$ and $\mathbf{e}_2$ yields
\begin{equation*}
\tilde{\mathbb{M}}_{\kappa,+} \left[
 \begin{array}{llll}
\langle  \boldsymbol{\varphi}, \mathbf{e}_1 \rangle  \\
\langle  \boldsymbol{\varphi}, \mathbf{e}_2 \rangle
\end{array}
\right] =
\left[
\begin{array}{llll}
0   \\
0
\end{array}
\right],
\end{equation*}
 where the matrix $\tilde{\mathbb{M}}_{\kappa,+}$ is defined by
 \begin{equation}  \label{eq:matrix-M_tilde+}
 \tilde{\mathbb{M}}_{\kappa,+}:=
 \left[
 \begin{array}{cc}
  \langle \mathbb{L}_{\kappa}^{-1}  \mathbf{e}_1,  \mathbf{e}_1 \rangle  &  \langle \mathbb{L}_{\kappa}^{-1}  \mathbf{e}_2,  \mathbf{e}_1\rangle \\
 \langle \mathbb{L}_{\kappa}^{-1}  \mathbf{e}_1,  \mathbf{e}_2 \rangle  &  \langle \mathbb{L}_{\kappa}^{-1}  \mathbf{e}_2,  \mathbf{e}_2 \rangle 
 \end{array}
 \right] 
  \left[
 \begin{array}{cc}
 \beta+ \tilde\beta &  \beta^-\\
 \beta^+  &  \beta + \tilde\beta
  \end{array}
 \right]  
 +  \left[
 \begin{array}{llll}
  1  &  0 \\
  0 &  1
  \end{array}
 \right]. 
 \end{equation}

Let $\tilde\lambda_{1,+}(k; \kappa, \varepsilon)$ and $\tilde\lambda_{2,+}(k; \kappa, \varepsilon)$ be the eigenvalues of $\tilde{\mathbb{M}}_{\kappa,+}$.
From the above discussions, it is seen that the characteristic values of the operator-valued function $\mathbb{T}_+(k;\kappa,\varepsilon)$ are the roots of $\tilde\lambda_{1,+}(k)$ and $\tilde\lambda_{2,+}(k)$.
Following a similar decomposition for $\mathbb{T}_-$, then
the characteristic values of the operator-valued function $\mathbb{T}_-$ are the roots of $\tilde\lambda_{1,-}(k)$ and $\tilde\lambda_{2,-}(k)$, which are eigenvalues of the matrix
\begin{equation}  \label{eq:matrix-M_tilde-}
\tilde{\mathbb{M}}_{\kappa,-}:=
\left[
\begin{array}{cc}
 \langle \mathbb{L}_{\kappa}^{-1}  \mathbf{e}_1,  \mathbf{e}_1 \rangle  &  \langle \mathbb{L}_{\kappa}^{-1}  \mathbf{e}_2,  \mathbf{e}_1\rangle \\
 \langle \mathbb{L}_{\kappa}^{-1}  \mathbf{e}_1,  \mathbf{e}_2 \rangle  &  \langle \mathbb{L}_{\kappa}^{-1}  \mathbf{e}_2,  \mathbf{e}_2 \rangle 
 \end{array}
\right] 
 \left[
\begin{array}{cc}
 \beta - \tilde\beta &  \beta^- \\
 \beta^+ &  \beta - \tilde\beta
 \end{array}
\right]  
+  \left[
\begin{array}{llll}
 1  &  0 \\
 0 &  1
 \end{array}
\right].
\end{equation}
\begin{rem}
The operator $\mathbb{P}_\kappa$  and $\mathbb{L}_\kappa$ take different forms in the decomposition of $\mathbb{T}_+$ and~$\mathbb{T}_-$.  For $\mathbb{T}_-$, all quantities with a tilde in the definitions of $\mathbb{P}_\kappa$ and $\mathbb{S}_\kappa^\infty$ should be multiplied by $-1$.  
We suppress this dependence here and henceforth, as it is clear in context.
The dependence on $\kappa$ is retained because the study of embedded eigenvalues and associated resonance is an analysis of the behavior of the scattering problem for $\kappa$ at and near~$0$.\\
\end{rem}

Since the leading-order of $\beta$ in $\varepsilon$ is $O(1/\varepsilon)$, we scale the matrix $\tilde{\mathbb{M}}_{\kappa,\pm}$ by letting
\begin{equation}\label{eq:matrix-M}
\mathbb{M}_{\kappa,\pm} \,:=\, \varepsilon\, \tilde{\mathbb{M}}_{\kappa,\pm},
\end{equation}
and the eigenvalues of $\mathbb{M}_{\kappa,\pm}$ are
\begin{equation}\label{eq:lambdas}
\lambda_{j,\pm}(k; \kappa, \varepsilon) \,:=\, \varepsilon \, \tilde\lambda_{j,\pm}(k; \kappa, \varepsilon), \quad j=1,2.
\end{equation}
The following proposition summarizes the resonance condition.

\begin{proposition} \label{prop:res_cond}
The singular frequencies of the scattering problem \eqref{eq:Helmholtz}--\eqref{eq:field2}  are the roots of the functions $\lambda_{j,\pm}(k; \kappa, \varepsilon)$, $(j=1,2)$, 
where $\lambda_{1,\pm}$ and $\lambda_{2,\pm}$ are eigenvalues of the matrix $\mathbb{M}_{\kappa,\pm}$.
\end{proposition}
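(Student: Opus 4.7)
The plan is essentially to assemble, as a formal statement, the reduction that has already been traced out in the discussion preceding the proposition. I will organize it into three stages: the even/odd splitting, the reduction of each operator equation $\mathbb{T}_\pm\boldsymbol{\varphi}=0$ to a finite-dimensional linear algebra problem for the pair $(\langle\boldsymbol{\varphi},\mathbf{e}_1\rangle,\langle\boldsymbol{\varphi},\mathbf{e}_2\rangle)$, and a trivial rescaling argument.

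First, I invoke the preceding lemma to write $\sigma(\mathbb{T})=\sigma(\mathbb{T}_+)\cup\sigma(\mathbb{T}_-)$, so it suffices to characterize each $\sigma(\mathbb{T}_\pm)$ separately. I handle $\mathbb{T}_+$ in detail; the argument for $\mathbb{T}_-$ is verbatim, with the tilde quantities negated. Using Proposition \ref{prop:decomp_op} I decompose $\mathbb{T}_+=\mathbb{P}_\kappa+\mathbb{L}_\kappa$, where $\mathbb{L}_\kappa=S\mathbb{I}+\mathbb{S}^\infty_\kappa$ is invertible on $V_1\times V_1$ for small $\varepsilon$ by Lemma~\ref{lem:L_inv}. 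Consequently $\mathbb{T}_+\boldsymbol{\varphi}=0$ is equivalent to $\boldsymbol{\varphi}=-\mathbb{L}_\kappa^{-1}\mathbb{P}_\kappa\boldsymbol{\varphi}$. Because $\mathbb{P}_\kappa$ maps into $\mathrm{span}\{\mathbf{e}_1,\mathbf{e}_2\}$, the image equation
\[
\mathbb{L}_\kappa^{-1}\mathbb{P}_\kappa\boldsymbol{\varphi}+\boldsymbol{\varphi}=\mathbf{0}
\]
can be expanded, using $\mathbb{P}_\kappa\boldsymbol{\varphi}=(\beta+\tilde\beta)\langle\boldsymbol{\varphi},\mathbf{e}_1\rangle\mathbf{e}_1+\beta^-\langle\boldsymbol{\varphi},\mathbf{e}_2\rangle\mathbf{e}_1+\beta^+\langle\boldsymbol{\varphi},\mathbf{e}_1\rangle\mathbf{e}_2+(\beta+\tilde\beta)\langle\boldsymbol{\varphi},\mathbf{e}_2\rangle\mathbf{e}_2$, exactly as in the derivation above.

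Taking $L^2(I)$ pairings with $\mathbf{e}_1$ and $\mathbf{e}_2$ then yields the $2\times2$ system $\tilde{\mathbb{M}}_{\kappa,+}[\langle\boldsymbol{\varphi},\mathbf{e}_1\rangle,\langle\boldsymbol{\varphi},\mathbf{e}_2\rangle]^T=0$ with the matrix \eqref{eq:matrix-M_tilde+}. Two directions need to be checked. (i) If $\boldsymbol{\varphi}\neq 0$ solves $\mathbb{T}_+\boldsymbol{\varphi}=0$, then the vector of inner products must be nonzero: otherwise $\mathbb{P}_\kappa\boldsymbol{\varphi}=0$, forcing $\mathbb{L}_\kappa\boldsymbol{\varphi}=0$ and hence $\boldsymbol{\varphi}=0$ by invertibility of $\mathbb{L}_\kappa$, a contradiction. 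Thus $\tilde{\mathbb{M}}_{\kappa,+}$ is singular, i.e.\ one of its eigenvalues $\tilde\lambda_{j,+}(k)$ vanishes. (ii) Conversely, if some $\tilde\lambda_{j,+}(k)=0$, pick a nonzero kernel vector $(c_1,c_2)^T$ of $\tilde{\mathbb{M}}_{\kappa,+}$ and lift it to $\boldsymbol{\varphi}:=-\mathbb{L}_\kappa^{-1}\bigl[(\beta+\tilde\beta)c_1\mathbf{e}_1+\beta^-c_2\mathbf{e}_1+\beta^+c_1\mathbf{e}_2+(\beta+\tilde\beta)c_2\mathbf{e}_2\bigr]$; a direct verification then shows $\langle\boldsymbol{\varphi},\mathbf{e}_j\rangle=c_j$ (this is precisely the statement that $\tilde{\mathbb{M}}_{\kappa,+}$ annihilates $(c_1,c_2)^T$) and $\mathbb{T}_+\boldsymbol{\varphi}=0$ with $\boldsymbol{\varphi}\neq 0$.

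The same two-way argument applies to $\mathbb{T}_-$ with the matrix \eqref{eq:matrix-M_tilde-}. Finally, the rescaling $\mathbb{M}_{\kappa,\pm}=\varepsilon\tilde{\mathbb{M}}_{\kappa,\pm}$ with $\varepsilon>0$ does not alter the zero set of the eigenvalue functions, so the singular frequencies are exactly the roots of $\lambda_{j,\pm}(k;\kappa,\varepsilon)=\varepsilon\tilde\lambda_{j,\pm}(k;\kappa,\varepsilon)$ for $j=1,2$. The only nontrivial point is the implication in (i), which is what genuinely uses the invertibility of $\mathbb{L}_\kappa$ provided by Lemma~\ref{lem:L_inv}; everything else is bookkeeping from the decomposition in Proposition~\ref{prop:decomp_op} and the finite-rank nature of $\mathbb{P}_\kappa$.
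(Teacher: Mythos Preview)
Your proposal is correct and follows essentially the same route as the paper, which presents this proposition only as a summary of the preceding discussion rather than with a formal proof. You actually add value by making the equivalence bidirectional explicitly: the paper derives the $2\times2$ system $\tilde{\mathbb{M}}_{\kappa,\pm}[\langle\boldsymbol{\varphi},\mathbf{e}_1\rangle,\langle\boldsymbol{\varphi},\mathbf{e}_2\rangle]^T=0$ and then simply asserts that the characteristic values of $\mathbb{T}_\pm$ coincide with the roots of $\tilde\lambda_{j,\pm}$, whereas you supply both the forward step (nontrivial $\boldsymbol{\varphi}$ forces a nonzero vector of averages via invertibility of $\mathbb{L}_\kappa$) and the converse lifting of a kernel vector $(c_1,c_2)^T$ back to a nonzero $\boldsymbol{\varphi}\in\ker\mathbb{T}_\pm$.
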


\subsection{Embedded eigenvalues and resonances for $\kappa=0$}\label{sec:kappa=0}

We investigate the roots of the functions $\lambda_{j,\pm}(k; \kappa, \varepsilon)$ $(j=1,2)$ when $\kappa=0$.
It is shown that real-valued roots and complex-valued roots with negative imaginary part coexist.  They correspond respectively to eigenvalues and resonances of the scattering problem at normal incidence. We prove their existence and derive their
asymptotic expansions.

\begin{lemma} \label{lem:identity-L}
The following statements for $\kappa=0$ hold.
\begin{itemize}
\item[(i)]  If $k$ is real, then $\mathbb{L}_0^{-1}\boldsymbol{\varphi}$ is real for any real-valued function $\boldsymbol{\varphi}$.
\item[(ii)]  $ \langle \mathbb{L}^{-1}_0  \mathbf{e}_1,  \mathbf{e}_1 \rangle = \langle \mathbb{L}^{-1}_0  \mathbf{e}_2,  \mathbf{e}_2 \rangle$ and
$ \langle \mathbb{L}^{-1}_0  \mathbf{e}_1,  \mathbf{e}_2 \rangle = \langle   \mathbb{L}^{-1}_0 \mathbf{e}_2,  \mathbf{e}_1 \rangle. $
\end{itemize}
\end{lemma}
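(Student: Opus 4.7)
The plan has two parts. For (i), the observation is that at $\kappa=0$ with $k\in\mathbb{R}$ every kernel comprising $\mathbb{L}_0 = S\,\mathbb{I}+\mathbb{S}_0^\infty$ is real-valued: the kernel $\rho(X,Y)$ of $S$ in \eqref{eq:rho} is manifestly real, Lemma~\ref{lem:green_ext_asy}(i) gives that $r_\mathrm{e}(0,\varepsilon;X,Y)$ and $\rho^\pm(0,\varepsilon;X,Y)$ are real functions of $|X-Y|$ and $|\pm d_0+\varepsilon(X-Y)|$, respectively, and Lemma~\ref{lem:green_int_asy} asserts the same for $r_{\mathrm{i},1},r_{\mathrm{i},2},\tilde r_{\mathrm{i},1},\tilde r_{\mathrm{i},2}$. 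Consequently $\mathbb{L}_0$ sends real vector-valued densities in $(V_1)^2$ to real vector-valued densities in $(V_2)^2$; since by Lemma~\ref{lem:L_inv} $\mathbb{L}_0$ is boundedly invertible, $\mathbb{L}_0^{-1}$ inherits this reality preservation.

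For (ii), I introduce the reflection operator $(R\varphi)(X):=\varphi(-X)$. Lemma~\ref{lem:S-1} translates into the operator identities $SR=RS$, $S_0^\infty R = R S_0^\infty$, $\tilde S^\infty R = R \tilde S^\infty$ and, crucially, the asymmetric relation $S_0^{\infty,\pm}R = R\,S_0^{\infty,\mp}$. Setting $[\psi_1,\psi_2]^T := \mathbb{L}_0^{-1}\mathbf{e}_1$ gives
\begin{align*}
(S+S_0^\infty+\tilde S^\infty)\psi_1 + S_0^{\infty,-}\psi_2 &= 1,\\
S_0^{\infty,+}\psi_1 + (S+S_0^\infty+\tilde S^\infty)\psi_2 &= 0.
\end{align*}
I then propose the \emph{swap-and-reflect} ansatz $\mathbb{L}_0^{-1}\mathbf{e}_2 = [R\psi_2, R\psi_1]^T$. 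Applying $\mathbb{L}_0$ to this ansatz and pushing $R$ through each operator using the identities above reduces each row to $R$ applied to the \emph{opposite} row of the system for $[\psi_1,\psi_2]^T$; together with $R\,1=1$ this verifies $\mathbb{L}_0[R\psi_2,R\psi_1]^T = \mathbf{e}_2$.

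The two claims in (ii) then follow by integrating and changing variables $X\mapsto -X$:
\begin{align*}
\langle \mathbb{L}_0^{-1}\mathbf{e}_2, \mathbf{e}_2 \rangle = \int_I (R\psi_1)(X)\,dX = \int_I \psi_1(X)\,dX = \langle \mathbb{L}_0^{-1}\mathbf{e}_1, \mathbf{e}_1 \rangle,
\end{align*}
and the off-diagonal identity $\langle \mathbb{L}_0^{-1}\mathbf{e}_2,\mathbf{e}_1\rangle = \langle \mathbb{L}_0^{-1}\mathbf{e}_1,\mathbf{e}_2\rangle$ follows analogously. The argument is essentially mechanical once Lemma~\ref{lem:S-1} and the reality part of Lemma~\ref{lem:green_ext_asy}(i) are in hand; the only subtle point is correctly tracking the swap $\pm\leftrightarrow\mp$ in the asymmetric identity $S_0^{\infty,\pm}R = R\,S_0^{\infty,\mp}$, which is precisely what dictates the interchange of $\psi_1$ and $\psi_2$ (rather than their individual reflection) in the ansatz for $\mathbb{L}_0^{-1}\mathbf{e}_2$.
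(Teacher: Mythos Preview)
Your proof is correct and follows essentially the same approach as the paper. The paper establishes the same swap-and-reflect relation $\tilde\varphi_2(X)=\varphi_1(-X)$, $\tilde\varphi_1(X)=\varphi_2(-X)$ by reducing the $2\times2$ system to a single equation via Schur complement and invoking Lemma~\ref{lem:S-2}(2), whereas you verify the ansatz $[R\psi_2,R\psi_1]^T$ directly using the intertwining identities from Lemma~\ref{lem:S-1}; both routes rest on the same reflection symmetry.
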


\begin{proof} First,  in view of  Lemmas \ref{lem:green_ext_asy} and \ref{lem:green_int_asy},
 $\mathbb{L}_0^{-1}\boldsymbol{\varphi}$ is real-valued since the kernels of operators $S$, $S_0^\infty$, $\tilde S^{\infty}$ $S_{0}^{\infty,-}$,
and $S_{0}^{\infty,+}$ are all real, and assertion (i) follows.

To show (ii), let $\boldsymbol{\varphi}=(\varphi_1, \varphi_2)^T$ and $\tilde{\boldsymbol{\varphi}}=(\tilde{\varphi}_1, \tilde{\varphi}_2)^T$ satisfy
 $\mathbb{L}_0\boldsymbol{\varphi} =\mathbf{e}_1$ and $\mathbb{L}_0\tilde{\boldsymbol{\varphi}} =\mathbf{e}_2 $. 
By a direct calculation, it is seen that
 $$ (\hat S-S^{\infty,-}_0 \hat S^{-1} S^{\infty,+}_0)\varphi_1 = 1, \quad (\hat S-S^{\infty,+}_0 \hat S^{-1} S^{\infty,-}_0)\tilde{\varphi}_2 = 1,   $$
where $\hat S:= S+S_0^\infty+\tilde S^{\infty}$.
By virtue of Lemmas \ref{lem:S-1} and \ref{lem:S-2}, there holds $\tilde{\varphi}_2(X)=\varphi_1(-X)$, and it follows that 
$\langle \mathbb{L}^{-1}_0  \mathbf{e}_1,  \mathbf{e}_1 \rangle = \langle \mathbb{L}^{-1}_0  \mathbf{e}_2,  \mathbf{e}_2 \rangle$.
Similarly, it can be shown that  $\tilde{\varphi}_1(X)=\varphi_2(-X)$, so the second identity also holds. 
\end{proof} 

\medskip

When $\kappa=0$, by noting that $\beta_\mathrm{e}^\pm (k, 0) =\hat\beta(k)$ (see~\eqref{eq:beta_kappa0})
and using the equalities in Lemma~\ref{lem:identity-L}, the matrix $\tilde{ \mathbb{M}}_{0,\pm}$ can be expressed as
\begin{equation} 
\tilde{\mathbb{M}}_{0,\pm}:=
\left[
\begin{array}{cc}
 \langle \mathbb{L}_0^{-1}  \mathbf{e}_1,  \mathbf{e}_1 \rangle  &  \langle \mathbb{L}_0^{-1}  \mathbf{e}_1,  \mathbf{e}_2\rangle \\
 \langle \mathbb{L}_0^{-1}  \mathbf{e}_1,  \mathbf{e}_2 \rangle  &  \langle \mathbb{L}_0^{-1}  \mathbf{e}_2,  \mathbf{e}_2 \rangle 
 \end{array}
\right] 
 \left[
\begin{array}{cc}
 \beta \pm \tilde\beta &  \hat\beta \\
\hat\beta &  \beta \pm \tilde\beta
 \end{array}
\right]  
+  \left[
\begin{array}{cc}
 1  &  0 \\
 0 &  1
 \end{array}
\right]. 
\end{equation}
It can be calculated that the eigenvalues of $\tilde{\mathbb{M}}_{0,\pm}$ are
\begin{eqnarray}\label{eq:eigen_M_tilde_pm}
\tilde\lambda_{1,\pm}(k; 0,\varepsilon) &=& 1+(\beta \pm \tilde \beta+ \hat \beta)   \left(\langle \mathbb{L}^{-1}_0  \mathbf{e}_1,  \mathbf{e}_1 \rangle  + \langle \mathbb{L}^{-1}_0  \mathbf{e}_1,  \mathbf{e}_2\rangle\right), \label{eq:lambda1}\\
\tilde \lambda_{2,\pm}(k; 0,\varepsilon) &=& 1+(\beta \pm \tilde \beta- \hat \beta )   \left(\langle \mathbb{L}^{-1}_0  \mathbf{e}_1,  \mathbf{e}_1 \rangle  - \langle \mathbb{L}^{-1}_0  \mathbf{e}_1,  \mathbf{e}_2\rangle\right),
\label{eq:lambda2} 
\end{eqnarray}
and the associated eigenvectors are $[1 \;\; 1]^T$ and $[1 \;\; -1]^T$.
Therefore, in view of Lemma~\ref{lem:L_inv} and formulas \eqref{beta_e}--\eqref{gamma} for $\beta$ and $\tilde \beta$, the eigenvalues of $\mathbb{M}_{0,\pm}$ are expressed explicitly as 
\begin{eqnarray}
\hspace{30pt}\lambda_{1,\pm}(k; 0, \varepsilon)&=&\varepsilon +\left[ \frac{1}{k\tan k}  \pm \frac{1 }{k\sin k}  + \frac{1}{\pi} \varepsilon \ln \varepsilon + \varepsilon \gamma(k, 0)  +  \varepsilon \hat\beta(k)  \right]   \left(\alpha + O(\varepsilon) \right),  \label{eq:formula_lambda1}  \\
\hspace{30pt} \lambda_{2,\pm}(k; 0, \varepsilon)&=&\varepsilon +\left[ \frac{1}{k\tan k} \pm \frac{1 }{k\sin k}  + \frac{1}{\pi} \varepsilon \ln \varepsilon + \varepsilon \gamma(k, 0) -  \varepsilon \hat\beta(k)  \right]   \left(\alpha + O(\varepsilon) \right).
\label{eq:formula_lambda2}  
\end{eqnarray}

\begin{rem}\label{rmk:imag_lambda}
If $(0,k)\in D_1$ (see~(\ref{diamond})), that is $0<k<2\pi/d$, then from the explicit expressions \eqref{gamma} and \eqref{eq:beta_kappa0}, we see that
$$ \mbox{Im} \, \Big(\gamma(k, 0)  +  \hat\beta(k)\Big) = - \frac{2}{d}  \frac{1}{\zeta_0(k)}
\quad \mbox{and} \quad 
 \mbox{Im} \, \Big(\gamma(k, 0)  -  \hat\beta(k)\Big) = 0.
$$ 
The $O(\varepsilon)$ terms in \eqref{eq:formula_lambda1} and \eqref{eq:formula_lambda2} are real-valued by Lemma~\ref{lem:identity-L}, and hence 
$\lambda_{1,+}(k; 0, \varepsilon)$ and $\lambda_{1,-}(k; 0, \varepsilon)$  are complex-valued functions, while $\lambda_{2,+}(k; 0, \varepsilon)$ and $\lambda_{2,-}(k; 0, \varepsilon)$ are real-valued functions. 
\end{rem}

\medskip

\begin{theorem}\label{thm:asym_res_eig}
If $\kappa=0$, the singular frequencies of  the scattering problem \eqref{eq:Helmholtz}--\eqref{eq:field2} admit the following asymptotic expressions in $\varepsilon$:
\begin{eqnarray*}
k_m^{(1)} &=&  m\pi +  2m\pi  \left [ \frac{1}{\pi}\varepsilon \ln\varepsilon + \left(  \frac{1}{\alpha} +  \gamma(m \pi, 0) +   \hat\beta(m \pi)  \right) \varepsilon \right] + O(\varepsilon^2\ln^2\varepsilon); \\
k_m^{(2)} &=&  m\pi +  2m\pi  \left [ \frac{1}{\pi}\varepsilon \ln\varepsilon + \left(  \frac{1}{\alpha} +  \gamma(m \pi, 0) -   \hat\beta(m \pi)  \right) \varepsilon \right] + O(\varepsilon^2\ln^2\varepsilon)
\end{eqnarray*} 
for positive integers $m<2/d$.  In the above, $\Im \, k_m^{(1)}=O(\varepsilon)$ and $\Im \, k_m^{(2)}=0$.
\end{theorem}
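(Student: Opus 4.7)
By Proposition \ref{prop:res_cond}, the singular frequencies at $\kappa=0$ are the zeros of $\lambda_{j,\pm}(k;0,\varepsilon)$ for $j=1,2$, given explicitly in \eqref{eq:formula_lambda1}--\eqref{eq:formula_lambda2}. The starting algebraic observation is the half-angle identity
$$\frac{1}{k\tan k}+\frac{1}{k\sin k}=\frac{\cos(k/2)}{k\sin(k/2)},\qquad \frac{1}{k\tan k}-\frac{1}{k\sin k}=-\frac{\sin(k/2)}{k\cos(k/2)}.$$
Hence, for every positive integer $m<2/d$, exactly one of the leading factors in \eqref{eq:formula_lambda1}--\eqref{eq:formula_lambda2} vanishes to first order at $k=m\pi$: the ``$+$'' factor when $m$ is odd, the ``$-$'' factor when $m$ is even. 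In either case, writing $k=m\pi+\delta$ and Taylor expanding gives
$$\frac{1}{k\tan k}\pm\frac{1}{k\sin k}=-\frac{\delta}{2m\pi}+O(\delta^{3}).$$
The remaining ingredients $\gamma(k,0)$ and $\hat\beta(k)$ are analytic at $k=m\pi$, so they equal $\gamma(m\pi,0)+O(\delta)$ and $\hat\beta(m\pi)+O(\delta)$ respectively. For each $m$ the two resulting scalar equations $\lambda=0$ naturally split into a ``$+\hat\beta$'' equation and a ``$-\hat\beta$'' equation — these will produce $k_m^{(1)}$ and $k_m^{(2)}$ respectively.

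The plan is then to substitute the ansatz $k=m\pi+\delta$, divide out the prefactor $\alpha+O(\varepsilon)$ supplied by Lemma \ref{lem:L_inv}, and balance terms. This gives
$$\frac{\varepsilon}{\alpha+O(\varepsilon)}-\frac{\delta}{2m\pi}+\frac{\varepsilon\ln\varepsilon}{\pi}+\varepsilon\bigl(\gamma(m\pi,0)\pm\hat\beta(m\pi)\bigr)+O(\varepsilon\delta)+O(\delta^{3})=0,$$
and solving for $\delta$ yields the claimed expansion. A posteriori $\delta=O(\varepsilon|\ln\varepsilon|)$, so that the combined remainders — namely $O(\varepsilon^{2})$ from $1/(\alpha+O(\varepsilon))=1/\alpha+O(\varepsilon)$, $O(\varepsilon\delta)=O(\varepsilon^{2}|\ln\varepsilon|)$, and $O(\delta^{3})=O(\varepsilon^{3}|\ln\varepsilon|^{3})$ — all fit inside $O(\varepsilon^{2}\ln^{2}\varepsilon)$. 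To upgrade this formal calculation to a genuine existence/uniqueness statement for the root, I would apply Rouch\'{e}'s theorem on a disk of radius $O(\varepsilon|\ln\varepsilon|)$ centered at $m\pi$ in the complex $k$-plane: the unperturbed function $-\delta/(2m\pi)$ has a single simple zero there, and $\lambda_{j,\pm}$ is an analytic $\varepsilon$-small perturbation, so a unique simple zero persists at the predicted location.

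The imaginary part assertions are then immediate from Remark \ref{rmk:imag_lambda}. For $k_m^{(1)}$ one has $\Im(\gamma(m\pi,0)+\hat\beta(m\pi))=-2/(d\,\zeta_{0}(m\pi))\neq 0$, so to leading order
$$\Im k_m^{(1)}=-\frac{4m\pi\,\varepsilon}{d\,\zeta_{0}(m\pi)}+O(\varepsilon^{2}\ln^{2}\varepsilon)=O(\varepsilon).$$
For $k_m^{(2)}$ one has $\Im(\gamma-\hat\beta)=0$; combining this with Lemma \ref{lem:identity-L}(i), which implies that the full function $\lambda^{(2)}(k;0,\varepsilon)$ is real-valued for real $k$, forces the simple root near $m\pi$ to be real, i.e.\ $\Im k_m^{(2)}=0$ exactly.

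The main obstacle I anticipate is the bookkeeping of the hidden $O(\varepsilon)$ in the factor $\alpha+O(\varepsilon)$ and the $k$-dependence of the Neumann-series correction to $\mathbb{L}_{\kappa}^{-1}$ in Lemma \ref{lem:L_inv}. To conclude that these corrections contribute only to $O(\varepsilon^{2}\ln^{2}\varepsilon)$ and do not introduce extra logarithmic factors, I need to confirm that the operator-valued remainders $r_\mathrm{e}$, $\rho^{\pm}$, $r_{\mathrm{i},j}$, $\tilde r_{\mathrm{i},j}$ from Lemmas \ref{lem:green_ext_asy} and \ref{lem:green_int_asy} depend analytically on $k$ near $k=m\pi$ with derivatives controlled uniformly, so that their contribution to $\langle\mathbb{L}_{0}^{-1}\mathbf{e}_i,\mathbf{e}_j\rangle$ is smooth in $k$ and can be Taylor-expanded against $\delta=O(\varepsilon|\ln\varepsilon|)$ without degrading the error.
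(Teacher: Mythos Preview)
Your proposal is correct and follows essentially the same route as the paper: identify the simple zero of the leading term $\frac{1}{k\tan k}\pm\frac{1}{k\sin k}$ at $k=m\pi$, Taylor expand \eqref{eq:formula_lambda1}--\eqref{eq:formula_lambda2}, solve for the shift, and invoke the real-valuedness of $\lambda_{2,\pm}$ from Remark~\ref{rmk:imag_lambda} for $\Im k_m^{(2)}=0$. The one refinement in the paper that directly addresses your closing concern is that Rouch\'e is applied in two stages---first on a fixed disc $B_\delta(m\pi)$ comparing against the leading term, then on a disc of radius $C\varepsilon^2\ln^2\varepsilon$ about the explicit root of the truncated function $\hat\lambda_{j,\pm}$ (obtained by replacing $\alpha+O(\varepsilon)$ by $\alpha$)---which sidesteps both the fact that the ``perturbation'' $\frac{1}{\pi}\varepsilon\ln\varepsilon+\cdots$ is not small relative to $-\delta/(2m\pi)$ on your proposed contour and the need to track the $k$-dependence of the Neumann-series remainder beyond boundedness.
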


\begin{rem}
The frequencies $ k_m^{(1)}$ for $m<2/d$ are resonances in the lower half of the complex plane, which are also called Fabry-Perot resonances \cite{garcia10, lin_zhang17, lin_zhang18_1}.
The frequencies $k_m^{(2)}$ are real-valued eigenvalues embedded in the continuous spectrum,
since the continuous spectrum of the quasi-periodic scattering operator is $[0,\infty)$ when $\kappa=0$ \cite{bonnet_starling94, shipman10}.
\end{rem}

\begin{rem}\label{rmk:dis_m}
The asymptotic expansions of  $k_m^{(1)}$ and $k_m^{(2)}$ in Theorem~\ref{thm:asym_res_eig} still hold for $m>2/d$.
However, when $m>2/d$, one has $\Re\,k_m^{(2)}\geq2\pi/d$  so that $(0,\Re\,k_m^{(2)})\not\in D_1$.  That is, this wavenumber-frequency pair lies above the diamond region in which exactly one of the Rayleigh modes is propagating and embedded eigenvalues are not expected.
Indeed, for such singular frequencies, there holds $\mbox{Im} \, \Big(\gamma(m\pi, 0)  -  \hat\beta(m\pi)\Big) \neq 0$ and the $O(\varepsilon^2\ln^2\varepsilon)$-term is also complex, and
the frequencies $k_m^{(2)}$ become complex-valued resonances with $\Im \, k_m^{(2)}=O(\varepsilon)$.
Here we restrict our attention to $m<2/d$ since we are concerned with embedded eigenvalues in this paper.
\end{rem}

\medskip

\begin{proof}  The leading-order term $\frac{1}{k\tan k}  + \frac{1 }{k\sin k}$ of $\lambda_{1,+}(k)$ attains a simple root $k_{m,0}=m\pi$ for odd integers $m$.
Let us choose the disc $B_{\delta}(k_{m,0})$ with radius $\delta$ centered at $k_{m,0}$ in the complex $k$-plane, with $\delta=O(1)$ as $\varepsilon\to0$. 
We analytically extend the functions $\beta_\mathrm{e}(k)$,  $\beta_\mathrm{i}(k)$,  $\tilde \beta $ and $\hat \beta(k)$ to $B_{\delta}(k_{m,0})$.  
One can show that the asymptotic expansions in $\varepsilon$ for the kernels $G_\varepsilon^\mathrm{e}$, $G_\varepsilon^{\mathrm{e},\pm}$,  $G_\varepsilon^\mathrm{i}$ and $\tilde G_\varepsilon^\mathrm{i}$ given in Lemmas \ref{lem:green_ext_asy}
 and \ref{lem:green_int_asy} hold in $B_{\delta_0}(k_{m,0})$.
From Rouche's theorem, we deduce that there is a simple root  of $\lambda_{1,+}(k; 0, \varepsilon)$, denoted as $k_{m}^{(1)}$,  close to $k_{m,0}$
if $\varepsilon$ is sufficiently small. 

To obtain the leading-order asymptotic terms of $k_{m}^{(1)}$, let us consider the root of
\begin{equation*}
\hat \lambda_{1,+}(k; \varepsilon)=\varepsilon +\left[ \frac{1}{k\tan k}  + \frac{1 }{k\sin k} + \frac{1}{\pi} \varepsilon \ln \varepsilon + \varepsilon \gamma(k, 0)  + \varepsilon \hat\beta(k)  \right] \alpha.
\end{equation*}
Let $\delta k = k-k_{m,0}$, then the Taylor expansion for $\hat \lambda_{1,+}(k, \varepsilon)$ at $k=k_{m,0}$ yields
\begin{eqnarray*}
\hat \lambda_{1,+}(k; \varepsilon) &=& \varepsilon +\bigg[\left.\left(\frac{1}{k\tan k}  + \frac{1 }{k\sin k}\right)'\right|_{k=k_{m,0}}\cdot\delta k +O(\delta k^2) +
\frac{1}{\pi} \varepsilon \ln \varepsilon \\
& &+ \varepsilon (\gamma(k_{m,0}, 0)+\hat{\beta}(k_{m,0}))  + \varepsilon \cdot O(\delta k)  \bigg] \cdot \alpha\,,
\end{eqnarray*}
and the root of $\hat \lambda_{1,+}$ is given by
$$
\hat k_{m}^{(1)} =k_{m, 0}+ 2m\pi \left [ \frac{1}{\pi}\varepsilon \ln\varepsilon + \left(  \frac{1}{\alpha} + \gamma(k_{m,0},0) + \hat\beta(k_{m,0}) \right)\varepsilon \right] + O(\varepsilon^2\ln^2\varepsilon). 
$$

The high-order term of the roots for $\lambda_{1,+}(k)$ can be obtained by the Rouche's theorem.
Note that 
$\lambda_{1,+}(k) - \hat\lambda_{1,+}(k)  =  (\hat\lambda_{1,+}(k) - \varepsilon)  \cdot O(\varepsilon)$,
one can find a constant $C>0$ such that 
$| \lambda_{1,+}(k) - \hat\lambda_{1,+}(k)|  < | \hat\lambda_{1,+}(k)|$ 
for all $k$ satisfying $|k-\hat k_{m}^{(1)}| = C \varepsilon^2\ln^2\varepsilon$.  The assertion holds by Rouche's theorem.

The roots of $\lambda_{1,-}(k; 0, \varepsilon)$ and $\lambda_{2,\pm}(k; 0, \varepsilon)$ can also be obtained by perturbation arguments.
In particular, $\lambda_{2,+}(k)$ attains roots close to $m\pi$ with odd integers $m$, while $\lambda_{1,-}(k)$ and $\lambda_{2,-}(k)$ attain roots close to $m\pi$ with even integers $m$.
Finally, $k_m^{(2)}$ are seen to be real by noting that the functions $\lambda_{2,+}(k; 0, \varepsilon)$ and  $\lambda_{2,-}(k; 0, \varepsilon)$ are real-valued for $k\in(0,2\pi/d)$.  
\end{proof}

\subsection{Perturbation of embedded eigenvalues and resonances}\label{sec:resonances}

If $\kappa \neq 0 $, both the eigenvalues and resonances at $\kappa=0$ will be perturbed on the complex $k$-plane.
The resonances will stay in the lower half plane. On the other hand, the real eigenvalues will emerge as a second group of complex-valued resonances that enter the lower half plane. We aim to obtain the asymptotic expansion of these two groups
of resonances. In particular, we would like to characterize the order of the imaginary parts for the resonances that originate
from the perturbation of embedded eigenvalues.

\medskip
\begin{rem}
We shall assume that $\kappa = O(\varepsilon^\rho)$ in this section, where $0<\rho<\frac{1}{2}$. 
This is not an essential assumption for the expansion of resonances discussed
in what follows.  However, one would need to investigate higher-order terms more thoroughly in the asymptotic expansion if $\rho\ge \frac{1}{2}$.
\end{rem}
\medskip

A brute-force perturbation argument leads to an order of $O(\kappa\varepsilon)$ for the imaginary parts 
of resonances that emanate from the eigenvalues as $\kappa$ is perturbed from $0$.
To obtain a better expansion, as given in Theorem~\ref{thm:asym_eig_perturb} below, we need to exploit the symmetry of the matrices~$\mathbb{M}_{\kappa,\pm}$. To this end,
we define matrices $\hat{\mathbb{M}}_{\kappa,+}$ and  $\hat{\mathbb{M}}_{\kappa,-}$ by
\begin{equation}  \label{eq:matrix-M_hat}
\frac{1}{\varepsilon}\, \hat{\mathbb{M}}_{\kappa,\pm}
\;=\;
\alpha
 \left[
\begin{array}{cc}
 \beta \pm \tilde \beta &  \beta^- \\
\beta^+ &  \beta \pm \tilde \beta
 \end{array}
\right]  
+  \left[
\begin{array}{llll}
 1  &  0 \\
 0 &  1
 \end{array}
\right],
\end{equation}
where $\alpha:=\langle S^{-1} 1, 1 \rangle $. The eigenvalues of the matrix $\hat{\mathbb{M}}_{\kappa,\pm}$ are
\begin{eqnarray}\label{eq:eigenval_M_hat}
\hat\lambda_{1,\pm}(k; \kappa,\varepsilon) &=& \varepsilon+\varepsilon\alpha (\beta \pm \tilde \beta + \sqrt{\beta^- \cdot \beta^+} ),  \\
\hat\lambda_{2,\pm}(k; \kappa,\varepsilon) &=& \varepsilon+\varepsilon\alpha (\beta \pm \tilde \beta - \sqrt{\beta^- \cdot \beta^+} ).
\end{eqnarray}
The corresponding (right) eigenvectors of $\hat{\mathbb{M}}_{\kappa,\pm}$ are
\begin{equation}\label{eq:eigenvec_M_hat}
\hat v_{1,\pm} = [\; 1, \;\; \sqrt{\beta^- \cdot \beta^+} / \beta^- \; ] ^T, \quad
\hat v_{2,\pm} = [\; 1, \;\; -\!\sqrt{\beta^- \cdot \beta^+} / \beta^- \; ] ^T,
\end{equation}
and the left eigenvectors are
\begin{equation}\label{eq:left_eigenvec_M_hat}
\hat w_{1,\pm} = [\; 1/2, \;\;  \beta^-/2(\sqrt{\beta^- \cdot \beta^+}) \; ], \quad
\hat w_{2,\pm} = [\; 1/2, \;\; -\!\beta^-/(2\sqrt{\beta^- \cdot \beta^+}) \;].
\end{equation}

\begin{lemma}\label{lem:eigen_M_hat}
The eigenvalues and eigenvectors of $\hat{\mathbb{M}}_{\kappa,\pm}$ attain the following asymptotic expansions as $\kappa\to0$.
\begin{eqnarray}
&& \hspace{10pt}\hat\lambda_{j,\pm}(k; \kappa,\varepsilon) = \varepsilon+\varepsilon \alpha \left[\beta \pm \tilde\beta + \textstyle\frac{1}{2}(-1)^{j+1} (\beta^+ + \beta^-) \right]  + O(\kappa^2\varepsilon), \quad j=1,2;  \label{eq:lambdaj} \\
&& \hspace{10pt} \hat v_{j,\pm} = [\; 1, \;\;  (-1)^{j+1} (1+\eta) \; ] ^T+ O(\kappa^2), \quad j=1,2; \label{eq:exp_vj} \\
&& \hspace{10pt} \hat w_{j,+} = [\; 1/2, \;\;  (-1)^{j+1}/(2(1+\eta)) \; ] + O(\kappa^2), \quad j=1,2,  \label{eq:exp_wj}
\end{eqnarray}
where $ \eta =  O(\kappa)$.
\end{lemma}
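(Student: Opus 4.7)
The plan is to read off the $\kappa$-dependence directly from the explicit formulas \eqref{eq:eigenval_M_hat}--\eqref{eq:left_eigenvec_M_hat} by expanding $\sqrt{\beta^+\beta^-}$ and $\sqrt{\beta^+/\beta^-}$ to first order in $\kappa$. The key structural input is the reflection symmetry
\begin{equation*}
\beta^+(k,\kappa)\;=\;\beta^-(k,-\kappa),
\end{equation*}
which I would establish by the reindexing $n\mapsto -n$ in the series \eqref{beta_pm}, together with the identities $\kappa_{-n}(\kappa)=-\kappa_n(-\kappa)$ and $\zeta_{-n}(k,\kappa)=\zeta_n(k,-\kappa)$. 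As an immediate consequence, $\beta^++\beta^-$ is even in $\kappa$ while $\beta^+-\beta^-$ is odd, so $\beta^+-\beta^-=O(\kappa)$ and $(\beta^+-\beta^-)^2=O(\kappa^2)$.

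Writing $A=(\beta^++\beta^-)/2$ and $B=(\beta^+-\beta^-)/2$, we have $\beta^+\beta^-=A^2-B^2$ with $A$ even and $B^2=O(\kappa^2)$. Since $A(k,0)=\hat\beta(k)$ is nonzero in the regime of interest --- its imaginary part equals $-1/(d\zeta_0(k))\neq 0$ by \eqref{eq:beta_kappa0} --- the principal square root expands as
\begin{equation*}
\sqrt{\beta^+\beta^-}\;=\;A\sqrt{1-(B/A)^2}\;=\;A+O(\kappa^2)\;=\;\tfrac{1}{2}(\beta^++\beta^-)+O(\kappa^2).
\end{equation*}
Substituting this into $\hat\lambda_{j,\pm}=\varepsilon+\varepsilon\alpha\bigl(\beta\pm\tilde\beta+(-1)^{j+1}\sqrt{\beta^+\beta^-}\bigr)$ produces \eqref{eq:lambdaj}.

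For the eigenvectors, I would rewrite $\sqrt{\beta^+\beta^-}/\beta^-=\sqrt{\beta^+/\beta^-}$. The ratio $r:=\beta^+/\beta^-=1+O(\kappa)$, and I take $\eta$ to be the linear-in-$\kappa$ term of $\sqrt{r}-1$, so that $\sqrt{r}=1+\eta+O(\kappa^2)$ with $\eta=O(\kappa)$. Substitution into \eqref{eq:eigenvec_M_hat} gives $\hat v_{j,\pm}=[\,1,\,(-1)^{j+1}(1+\eta)\,]^T+O(\kappa^2)$, and the reciprocal expansion $1/\sqrt{r}=1/(1+\eta)+O(\kappa^2)$ inserted into \eqref{eq:left_eigenvec_M_hat} yields the stated form of $\hat w_{j,+}$.

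The one nontrivial ingredient is the symmetry $\beta^+(k,\kappa)=\beta^-(k,-\kappa)$: without it, the naive bound $\beta^+-\beta^-=O(\kappa)$ would only produce $O(\kappa)$ corrections in the eigenvalues, whereas the oddness upgrades this to $O(\kappa^2)$. The auxiliary nondegeneracy $\hat\beta(k)\neq 0$, needed so that the square-root branches are analytic near $\kappa=0$, is read off directly from \eqref{eq:beta_kappa0}. Uniformity of the remainders is routine since the series defining $\beta^\pm$ converge geometrically under the diamond condition $(\kappa,k)\in D_1$.
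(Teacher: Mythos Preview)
Your argument is correct and follows essentially the same line as the paper's proof: both reduce to the observation that $\sqrt{\beta^+\beta^-}=\tfrac12(\beta^++\beta^-)+O(\kappa^2)$, obtained by writing $\beta^+\beta^-=\tfrac14[(\beta^++\beta^-)^2-(\beta^+-\beta^-)^2]$ and noting $(\beta^+-\beta^-)^2=O(\kappa^2)$.

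One remark on your commentary at the end. The reflection identity $\beta^+(k,\kappa)=\beta^-(k,-\kappa)$ is true, but it is not needed for the $O(\kappa^2)$ eigenvalue remainder, and your stated reason for needing it is mistaken. The paper simply uses $\beta^+(k,0)=\beta^-(k,0)=\hat\beta(k)$ (from \eqref{eq:beta_kappa0}) together with analyticity to conclude $\beta^+-\beta^-=O(\kappa)$; squaring then gives $(\beta^+-\beta^-)^2=O(\kappa^2)$ automatically. Your assertion that ``the naive bound $\beta^+-\beta^-=O(\kappa)$ would only produce $O(\kappa)$ corrections in the eigenvalues'' is not right: the eigenvalue correction is $\sqrt{A^2-B^2}-A=-B^2/(2A)+O(B^4)$, which is already $O(\kappa^2)$ once $B=O(\kappa)$. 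The parity argument buys nothing extra here.
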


\begin{proof}  Note that 
\begin{equation*}
\beta^- \cdot \beta^+ = \textstyle\frac{1}{4} \left[ (\beta^+ + \beta^-)^2 - (\beta^+ - \beta^-)^2 \right ].
\end{equation*}
Since $\beta^+(k,0) = \beta^-(k,0) = \hat\beta$, we have $(\beta^+ - \beta^-)^2 = O(\kappa^2)$ as $\kappa\to0$.
Hence
$$ \sqrt{\beta^- \cdot \beta^+} = \textstyle\frac{1}{2}  (\beta^+ + \beta^-) + O(\kappa^2),   $$
and the expansions of the eigenvalues and eigenvectors follow.
\end{proof}

If $(\kappa,k)\in D_1$,
it follows from the explicit expressions \eqref{beta_e}--\eqref{gamma} that 
\begin{eqnarray}
&& \Im \, \beta(k, \kappa) + \frac{1}{2}(\Im \, \beta^+(k,\kappa) + \Im \, \beta^-(k,\kappa))  = O(1), \label{eq:imag_lambda1}  \\
&& \Im \, \beta(k, \kappa) - \frac{1}{2}(\Im \, \beta^+(k,\kappa) + \Im \, \beta^-(k,\kappa))  = \frac{\cos(\kappa d_0-1)}{\zeta_0(k) \cdot d}  = O(\kappa^2).  \label{eq:imag_lambda2}
\end{eqnarray}
Consequently, we have the following proposition deduced from the previous lemma.

\begin{proposition}\label{prop:perturb_eig2_M}
If $(\kappa,k)\in D_1$, then $\Im \, \hat\lambda_{1,\pm}(k) = O(\varepsilon) $ and $\Im \, \hat\lambda_{2,\pm}(k) = O(\kappa^2\varepsilon)$.
\end{proposition}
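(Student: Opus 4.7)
The plan is to read off the imaginary parts of $\hat\lambda_{j,\pm}$ from the expansion \eqref{eq:lambdaj} and exploit a cancellation between $\Im\beta$ and $\tfrac12\Im(\beta^++\beta^-)$ that is forced by the near-symmetry at $\kappa=0$. Rewriting \eqref{eq:lambdaj} as
\begin{equation*}
\hat\lambda_{j,\pm}(k;\kappa,\varepsilon)-\varepsilon \;=\; \varepsilon\alpha\Big[\beta \pm \tilde\beta + \tfrac{1}{2}(-1)^{j+1}(\beta^+ + \beta^-)\Big] + O(\kappa^2\varepsilon),
\end{equation*}
I would first observe that for real $k$ both $\alpha=\langle S^{-1}1,1\rangle$ and $\tilde\beta(k,\varepsilon)=1/(\varepsilon k\sin k)$ are real: $\alpha$ by Lemma \ref{lem:identity-L}(i) applied to the constant function (the kernel of $S$ being real), and $\tilde\beta$ by inspection of \eqref{beta_i_tilde}. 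Hence $\tilde\beta$ drops out of the imaginary part and
\begin{equation*}
\Im\hat\lambda_{j,\pm}(k;\kappa,\varepsilon)\;=\;\varepsilon\alpha\,\Im\!\Big[\beta(k,\kappa)+\tfrac{1}{2}(-1)^{j+1}\bigl(\beta^+(k,\kappa)+\beta^-(k,\kappa)\bigr)\Big]+O(\kappa^2\varepsilon).
\end{equation*}

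Next I would justify the key identities \eqref{eq:imag_lambda1}--\eqref{eq:imag_lambda2}. For $(\kappa,k)\in D_1$ only the $n=0$ Rayleigh order is propagating, so for $n\neq 0$ the factor $-\tfrac{i}{d}\zeta_n^{-1}$ appearing in both \eqref{beta_e} and \eqref{beta_pm} is real; therefore
\begin{equation*}
\Im\beta(k,\kappa)=-\frac{1}{d\,\zeta_0(k,\kappa)},\qquad \Im\beta^\pm(k,\kappa)=-\frac{\cos(\kappa d_0)}{d\,\zeta_0(k,\kappa)}.
\end{equation*}
A short calculation then gives $\Im\beta+\tfrac12\Im(\beta^++\beta^-)=-\bigl(1+\cos(\kappa d_0)\bigr)/(d\zeta_0)=O(1)$ and $\Im\beta-\tfrac12\Im(\beta^++\beta^-)=\bigl(\cos(\kappa d_0)-1\bigr)/(d\zeta_0)=O(\kappa^2)$, the latter by the Taylor expansion $\cos(\kappa d_0)=1-\tfrac12(\kappa d_0)^2+O(\kappa^4)$.

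Substituting these bounds into the displayed formula for $\Im\hat\lambda_{j,\pm}$, and using that $\kappa=O(\varepsilon^\rho)$ with $0<\rho<\tfrac12$ so that $\kappa^2\varepsilon=o(\varepsilon)$, yields $\Im\hat\lambda_{1,\pm}=\varepsilon\alpha\cdot O(1)+O(\kappa^2\varepsilon)=O(\varepsilon)$ and $\Im\hat\lambda_{2,\pm}=\varepsilon\alpha\cdot O(\kappa^2)+O(\kappa^2\varepsilon)=O(\kappa^2\varepsilon)$. The argument is essentially algebraic once the expansion \eqref{eq:lambdaj} is in hand; the only points requiring care are the realness of $\alpha$ (which I view as the main bookkeeping step) and the observation that the $O(\kappa^2)$ cancellation in \eqref{eq:imag_lambda2} is exactly the analytic trace of the symmetry restoration of the two-slit geometry as $\kappa\to 0$, so I do not anticipate a genuine technical obstacle.
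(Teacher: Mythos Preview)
Your approach is exactly the paper's: the proposition is stated immediately after \eqref{eq:imag_lambda1}--\eqref{eq:imag_lambda2} and is deduced from those identities together with Lemma~\ref{lem:eigen_M_hat}, just as you do.

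One small slip to fix: your displayed formula $\Im\beta^\pm(k,\kappa)=-\cos(\kappa d_0)/(d\zeta_0)$ is not correct for $\beta^+$ and $\beta^-$ \emph{individually}. For $n\neq 0$ the coefficient $-\tfrac{i}{d}\zeta_n^{-1}=:r_n$ is indeed real, but in \eqref{beta_pm} it multiplies $e^{\pm i\kappa_n d_0}$, so the $n\neq 0$ terms of $\beta^\pm$ contribute imaginary parts $\pm r_n\sin(\kappa_n d_0)$. These cancel in the symmetric combination
\[
\tfrac12(\beta^++\beta^-)\;=\;-\frac{i}{d}\sum_{n\in\mathbb{Z}}\frac{\cos(\kappa_n d_0)}{\zeta_n},
\]
whose $n\neq 0$ terms are genuinely real and whose $n=0$ term gives exactly $-\cos(\kappa d_0)/(d\zeta_0)$ for the imaginary part. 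Since you only ever use $\tfrac12\Im(\beta^++\beta^-)$ in the subsequent lines, the argument goes through unchanged once the statement is made for the sum rather than for each $\beta^\pm$ separately.
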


\medskip

Next, we prove the key lemma for the sensitivity of eigenvalues of the matrix $\mathbb{M}_{\kappa,\pm}$ with respect to 
the perturbation $\delta  \mathbb{M}_\pm := \mathbb{M}_{\kappa,\pm} - \hat{\mathbb{M}}_{\kappa,\pm}$ .
\begin{lemma}\label{lem:perturb_eig_M}
Let $\{\lambda_{j,\pm}\}_{j=1}^2$ and $\{\hat\lambda_{j,\pm}\}_{j=1}^2$ be the eigenvalues of $\mathbb{M}_{\kappa,\pm}$ and $\hat{\mathbb{M}}_{\kappa,\pm}$ respectively, then
\begin{equation}\label{eq:lambda_sensitivity}
\lambda_{j,\pm} (k; \kappa,\varepsilon)  = (1+r_j(k; \kappa,\varepsilon) ) \cdot \hat\lambda_{j,\pm} (k; \kappa,\varepsilon)  + r_{j,h} (k; \kappa,\varepsilon), \quad j=1,2,
\end{equation}
where $ r_j = O(\varepsilon)$ and  $ r_{j,h} = O(\varepsilon^2)$.
\end{lemma}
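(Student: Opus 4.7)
The plan is to exploit the specific structure of $\delta\mathbb{M}_\pm := \mathbb{M}_{\kappa,\pm} - \hat{\mathbb{M}}_{\kappa,\pm}$ by showing that its leading part is proportional to the mean eigenvalue $\mu_0 := \tfrac{1}{2}(\hat\lambda_{1,\pm}+\hat\lambda_{2,\pm})$; then, since $\mu_0 = \hat\lambda_{j,\pm}+O(\varepsilon)$, this proportional-to-$\mu_0$ perturbation converts into the multiplicative factor $(1+r_j)$ on $\hat\lambda_{j,\pm}$ plus an $O(\varepsilon^2)$ additive remainder.

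The first step is the key identity
\[
\delta\mathbb{M}_\pm \;=\; \tfrac{\mu_0}{\alpha}\,\delta A \;+\; O(\varepsilon^2),
\]
where $\delta A := A - \alpha I$ and $A$ is the $2\times 2$ matrix of pairings $\langle \mathbb{L}_\kappa^{-1}\mathbf{e}_i,\mathbf{e}_j\rangle$. From \eqref{eq:matrix-M_tilde+}, \eqref{eq:matrix-M_hat}, and \eqref{eq:matrix-M} we have $\mathbb{M}_{\kappa,\pm} = \varepsilon(AB+I)$ and $\hat{\mathbb{M}}_{\kappa,\pm} = \varepsilon(\alpha B + I)$, hence $\delta\mathbb{M}_\pm = \varepsilon\,\delta A\,B$ with $\delta A = O(\varepsilon)$ by Lemma~\ref{lem:L_inv}. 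Splitting $B = (\beta\pm\tilde\beta)I + B_0$ with $B_0 := \begin{bmatrix}0 & \beta^- \\ \beta^+ & 0\end{bmatrix} = O(1)$, the off-diagonal contribution $\varepsilon\,\delta A\,B_0$ is immediately $O(\varepsilon^2)$. For the diagonal piece I use $\mu_0 = \varepsilon(1+\alpha(\beta\pm\tilde\beta))$, which gives $\varepsilon(\beta\pm\tilde\beta) = (\mu_0-\varepsilon)/\alpha$, and absorb the stray $\varepsilon\delta A/\alpha = O(\varepsilon^2)$ to produce the identity.

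Next I would compare eigenvalues through the explicit $2\times 2$ quadratic formula. Set $D := \mathrm{tr}(\mathbb{M}_{\kappa,\pm})^2 - 4\det(\mathbb{M}_{\kappa,\pm})$ and $\hat D := (\hat\lambda_{1,\pm}-\hat\lambda_{2,\pm})^2 = 4\varepsilon^2\alpha^2\beta^+\beta^-$ (so $|\sqrt{\hat D}| = O(\varepsilon)$), and choose the branch of $\sqrt D$ that extends $\sqrt{\hat D}$ continuously as $\delta\mathbb{M}_\pm$ is switched on. Then
\[
\lambda_{j,\pm} - \hat\lambda_{j,\pm} \;=\; \tfrac{1}{2}\mathrm{tr}(\delta\mathbb{M}_\pm) \;+\; \tfrac{(-1)^{j+1}}{2}\cdot \tfrac{D-\hat D}{\sqrt D + \sqrt{\hat D}}.
\]
The first step gives $\mathrm{tr}(\delta\mathbb{M}_\pm) = O(\mu_0\varepsilon)+O(\varepsilon^2)$. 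The heart of the proof is a cancellation in $D-\hat D$: expanding $\mathrm{tr}(\mathbb{M}_\pm)^2 - \mathrm{tr}(\hat{\mathbb{M}}_\pm)^2 = 2\mathrm{tr}(\hat{\mathbb{M}}_\pm)\mathrm{tr}(\delta\mathbb{M}_\pm) + \mathrm{tr}(\delta\mathbb{M}_\pm)^2$, and expanding $4[\det\mathbb{M}_\pm - \det\hat{\mathbb{M}}_\pm]$ via the $2\times 2$ cofactor formula, both contain the same leading piece $\tfrac{4\mu_0^2}{\alpha}\mathrm{tr}(\delta A)$ (coming from $\mathrm{tr}(\hat{\mathbb{M}}_\pm) = 2\mu_0$ in the former and from $\hat{\mathbb{M}}_{11} = \hat{\mathbb{M}}_{22} = \mu_0$ in the latter). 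After cancellation the residual is $O(\mu_0\varepsilon^2)+O(\varepsilon^3)$, and dividing by $\sqrt D + \sqrt{\hat D} = O(\varepsilon)$ yields $\lambda_{j,\pm}-\hat\lambda_{j,\pm} = O(\mu_0\varepsilon)+O(\varepsilon^2)$. Substituting $\mu_0 - \hat\lambda_{j,\pm} = (-1)^j\Delta/2 = O(\varepsilon)$ with $\Delta := \hat\lambda_{1,\pm}-\hat\lambda_{2,\pm}$ then converts $O(\mu_0\varepsilon)$ into $O(\varepsilon)\hat\lambda_{j,\pm}+O(\varepsilon^2)$, delivering the required representation.

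The main obstacle will be verifying the cancellation of the $\tfrac{4\mu_0^2}{\alpha}\mathrm{tr}(\delta A)$ terms in $D-\hat D$ and carefully tracking the subleading pieces. Because $\delta\mathbb{M}_\pm = O(\varepsilon)$ entrywise while the eigenvalue gap $|\Delta|$ is also $O(\varepsilon)$, a crude trace/determinant bound would only give $\lambda_{j,\pm}-\hat\lambda_{j,\pm} = O(\varepsilon)$, losing the $O(\varepsilon^2)$ additive control of $r_{j,h}$ that is later needed for the sharp $O(\kappa^2\varepsilon)$ imaginary-part estimate in Theorem~\ref{thm:asym_eig_perturb}. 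A secondary care point is the branch choice of $\sqrt D$ together with the generic nondegeneracy $\beta^+\beta^-\neq 0$ that keeps $\sqrt D+\sqrt{\hat D}$ bounded below by a multiple of $\varepsilon$.
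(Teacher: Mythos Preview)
Your approach is correct in spirit but takes a noticeably different and more laborious route than the paper.  The paper does not pass through the quadratic formula at all.  Instead, it uses first–order eigenvalue perturbation theory with left and right eigenvectors: from $\mathbb{M}_{\kappa,+}v_{j,+}=\lambda_{j,+}v_{j,+}$ one obtains
\[
\delta\lambda_{j,+}\,(\hat w_{j,+}\hat v_{j,+})
=\hat w_{j,+}\,\delta\mathbb{M}_+\,\hat v_{j,+}+O(\varepsilon^2),
\]
and then exploits a sharper structural fact than your $\delta\mathbb{M}_\pm=\tfrac{\mu_0}{\alpha}\delta A+O(\varepsilon^2)$:  since $\hat v_{j,+}$ is actually an eigenvector of $B$ (because $\hat{\mathbb{M}}_{\kappa,+}=\varepsilon\alpha B+\varepsilon I$), one has the \emph{exact} identity
\[
\delta\mathbb{M}_+\,\hat v_{j,+}
=\varepsilon\,\delta A\,B\,\hat v_{j,+}
=\tfrac{1}{\alpha}(\hat\lambda_{j,+}-\varepsilon)\,\delta A\,\hat v_{j,+}.
\]
This produces the factor $\hat\lambda_{j,+}$ directly, with no discriminant computation and no need to convert $\mu_0$ back into $\hat\lambda_{j,\pm}$.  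Your route recovers the same information after the trace/determinant cancellation, so the discriminant step is really a hand-built substitute for this one-line eigenvector identity.

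One caution about your plan: the step ``$\sqrt D+\sqrt{\hat D}$ is bounded below by a multiple of $\varepsilon$'' is not as innocent as you suggest.  Away from resonance $\mu_0$ is $O(1)$, so your own estimate $D-\hat D=O(\mu_0\varepsilon^2)+O(\varepsilon^3)$ is merely $O(\varepsilon^2)$, comparable in size to $\hat D$ itself; nondegeneracy $\beta^+\beta^-\neq0$ alone does not prevent $D$ from swinging to the other side and shrinking $\sqrt D+\sqrt{\hat D}$.  You would need an auxiliary Bauer--Fike bound (as the paper invokes) to control $\sqrt D=\lambda_1-\lambda_2$ against $\sqrt{\hat D}=\hat\lambda_1-\hat\lambda_2$, or else split into the regimes $|\mu_0|$ small and $|\mu_0|$ bounded below, where in the latter the crude $O(\varepsilon)$ estimate already suffices.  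The paper's eigenvector-based argument sidesteps this issue entirely.
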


\begin{proof}  A direct comparison of \eqref{eq:matrix-M} and \eqref{eq:matrix-M_hat} gives
\begin{equation}\label{eq:delta_M}
\delta  \mathbb{M}_+= \varepsilon
\left[
\begin{array}{cc}
 \langle \mathbb{L}_\kappa^{-1}  \mathbf{e}_1,  \mathbf{e}_1 \rangle - \alpha &  \langle \mathbb{L}_\kappa^{-1}  \mathbf{e}_2,  \mathbf{e}_1\rangle \\
 \langle \mathbb{L}_\kappa^{-1}  \mathbf{e}_1,  \mathbf{e}_2 \rangle  &  \langle \mathbb{L}_\kappa^{-1}  \mathbf{e}_2,  \mathbf{e}_2 \rangle  - \alpha
 \end{array}
\right] 
 \left[
\begin{array}{cc}
 \beta+\tilde\beta &  \beta^- \\
\beta^+ &  \beta+\tilde\beta
 \end{array}
\right].
\end{equation}
Let  $\delta\lambda_{j,+}=\lambda_{j,+}(k; \kappa,\varepsilon) - \hat\lambda_{j,+}(k; \kappa,\varepsilon)$  and $\delta v_{j,+} =v_{j,+} - \hat v_{j,+}$ be the perturbation of the eigenvalues and eigenvectors.
It follows from Lemma~\ref{lem:L_inv} that 
\begin{equation}\label{eq:L2}
\langle \mathbb{L}_\kappa^{-1}  \mathbf{e}_\ell,  \mathbf{e}_j \rangle - \alpha \, \delta_{\ell j} = O(\varepsilon),
\end{equation}
where $\delta_{\ell j}$ is Kronecker delta, and consequently,
$\norm {\delta  \mathbb{M}_+} = O(\varepsilon)$.  An application of the Bauer-Fike theorem for the perturbation of eigenvalues (cf.~\cite{eistenstat}) yields
$$ |\delta\lambda_{j,+}| =   O(\varepsilon), \quad\mbox{and} \quad  \norm{\delta v_{j,+}} = O(\varepsilon). $$
Now from the relation $\mathbb{M}_{\kappa,+}  v_{j,+} = \lambda_{j,+} v_{j,+}$, we obtain
$$\hat \lambda_{j,+} \cdot \delta v_{j,+} + \delta\lambda_{j,+} \cdot \hat v_{j,+}  \;=\; \hat{\mathbb{M}}_{\kappa,+} \, \delta v_{j,+} +  \delta  \mathbb{M}_+ \, \hat v_{j,+} + O(\varepsilon^2).  $$
Multiplying by the left-eigenvector $\hat w_{j,+}$ leads to
\begin{equation}\label{eq:lambda_perturb1}
 \delta\lambda_{j,+}  \cdot (\hat w_{j,+} v_{j,+})  \;=\; \hat w_{j,+} \, \delta \mathbb{M}_\kappa  \, \hat v_{j,+}  + O(\varepsilon^2).
\end{equation}
Since $\hat \lambda_{j,+}$ is an eigenvalue of $\hat{\mathbb{M}}_{\kappa,+}$,  in light of \eqref{eq:delta_M}, we see that
\begin{equation}\label{eq:lambda_perturb2}
\delta \mathbb{M}_+  \, \hat v_{j,+}  \;=\; \frac{1}{\alpha}(\hat\lambda_{j,+} -\varepsilon)
\left[
\begin{array}{cc}
 \langle \mathbb{L}_\kappa^{-1}  \mathbf{e}_1,  \mathbf{e}_1 \rangle - \alpha &  \langle \mathbb{L}_\kappa^{-1}  \mathbf{e}_2,  \mathbf{e}_1\rangle \\
 \langle \mathbb{L}_\kappa^{-1}  \mathbf{e}_1,  \mathbf{e}_2 \rangle  &  \langle \mathbb{L}_\kappa^{-1}  \mathbf{e}_2,  \mathbf{e}_2 \rangle  - \alpha
 \end{array}
\right]  \hat v_{j,+}.
\end{equation}
The assertion follows from \eqref{eq:L2}--\eqref{eq:lambda_perturb2} and the expansion for the eigenvectors
in Lemma~\ref{lem:eigen_M_hat}.  The sensitivity of the eigenvalues for $\lambda_{j,-}$ is analyzed parallelly. 
\end{proof}

\begin{rem} If $k$ is real and $0<k<2\pi/d$, then in the above lemma, $r_2=O(\varepsilon) + i\,O(\kappa \varepsilon)$ and 
$r_{2,h}=O(\varepsilon^2)+i\,O(\kappa \varepsilon^2)$, where the $O (\cdot)$ terms are real. This can be shown by observing that
$ \langle \mathbb{L}_\kappa^{-1}  \mathbf{e}_\ell,  \mathbf{e}_j \rangle - \alpha \delta_{\ell j} = O(\varepsilon) + i \,O(\kappa \varepsilon)$
when $j=2$.
\end{rem}

Now with the explict expressions for the eigenvalues of $\hat{\mathbb{M}}_{\kappa,\pm}$ in Lemma \ref{lem:eigen_M_hat} and the sensitivity of eigenvalues for $\mathbb{M}_{\kappa,\pm}$ in Lemma \ref{lem:perturb_eig_M},
we are ready to present the perturbation of embedded eigenvalues and resonances when $\kappa$ becomes nonzero. This is given in the following theorem.

\medskip

\begin{theorem}\label{thm:asym_eig_perturb}
If $\kappa = O(\varepsilon^\rho)$ with $0<\rho<\frac{1}{2}$,
then the scattering problem \eqref{eq:Helmholtz}--\eqref{eq:field2} admits two groups of complex-valued resonances given by
\begin{eqnarray*}
k_m^{(1)} &=&  m \pi +  2m \pi  \left [ \frac{1}{\pi}\varepsilon \ln\varepsilon + \left(  \frac{1}{\alpha} +  \gamma(m \pi, \kappa) +   
\frac{1}{2}( \beta^+(m \pi,\kappa)) + \beta^-(m \pi, \kappa) \right)  \varepsilon \right] + O(\varepsilon^2\ln^2\varepsilon), \\
k_m^{(2)} &=&  m \pi + 2 m \pi  \left [ \frac{1}{\pi}\varepsilon \ln\varepsilon + \left(  \frac{1}{\alpha} +  \gamma(m \pi, \kappa) -   
\frac{1}{2}(\beta^+(m \pi,\kappa) + \beta^-(m \pi, \kappa) )  \right)\varepsilon \right] + O(\varepsilon^2\ln^2\varepsilon)
\end{eqnarray*} 
for $m<2/d$. Furthermore, there holds
$$ \Im \, k_m^{(1)} = O(\varepsilon) \quad \mbox{and} \quad \Im \, k_m^{(2)} = O(\kappa^2 \varepsilon). $$
\end{theorem}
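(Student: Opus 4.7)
The plan is to combine Lemma~\ref{lem:perturb_eig_M} with an application of Rouch\'e's theorem, mirroring the strategy of Theorem~\ref{thm:asym_res_eig} but now with $\kappa\neq 0$. By Proposition~\ref{prop:res_cond}, the singular frequencies are the roots of the scalar eigenvalue functions $\lambda_{j,\pm}(k;\kappa,\varepsilon)$, and Lemma~\ref{lem:perturb_eig_M} reduces their study to that of the explicitly computable eigenvalues $\hat\lambda_{j,\pm}$ of the constant-kernel matrix $\hat{\mathbb{M}}_{\kappa,\pm}$, modulo a multiplicative perturbation $1+r_j=1+O(\varepsilon)$ and an additive perturbation $r_{j,h}=O(\varepsilon^2)$.

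The first step is to locate the roots of $\hat\lambda_{j,\pm}$. Substituting $\varepsilon\beta=1/(k\tan k)+\varepsilon\pi^{-1}\ln\varepsilon+\varepsilon\gamma(k,\kappa)$ and $\varepsilon\tilde\beta=1/(k\sin k)$ from \eqref{beta_e}--\eqref{gamma} into the formula provided by Lemma~\ref{lem:eigen_M_hat} yields
\begin{equation*}
\hat\lambda_{j,\pm}(k;\kappa,\varepsilon) = \varepsilon + \alpha\left[\frac{1}{k\tan k}\pm\frac{1}{k\sin k} + \frac{\varepsilon\ln\varepsilon}{\pi} + \varepsilon\gamma(k,\kappa) + \frac{(-1)^{j+1}\varepsilon}{2}(\beta^+ + \beta^-)\right] + O(\kappa^2\varepsilon).
\end{equation*}
The leading factor $(\cos k\pm 1)/(k\sin k)$ has a simple zero at $k_{m,0}=m\pi$ for odd $m$ when the sign is $+$ and for even $m$ when it is $-$; its derivative there equals $-1/(2m\pi)$ in both cases. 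A Taylor expansion about $k_{m,0}$ together with Rouch\'e's theorem on a disc of radius $O(\varepsilon|\ln\varepsilon|)$ produces a unique root $\hat k_m^{(j)}$ of $\hat\lambda_{j,\pm}$ with the leading terms displayed in the theorem. Passing from $\hat\lambda_{j,\pm}$ to $\lambda_{j,\pm}$ is then routine: the identity in Lemma~\ref{lem:perturb_eig_M} gives $|\lambda_{j,\pm}-\hat\lambda_{j,\pm}|\leq |r_j|\,|\hat\lambda_{j,\pm}|+|r_{j,h}|=O(\varepsilon|k-\hat k_m^{(j)}|)+O(\varepsilon^2)$, so a second application of Rouch\'e's theorem on a circle of radius $C\varepsilon^2\ln^2\varepsilon$ about $\hat k_m^{(j)}$ yields $k_m^{(j)}$ with the stated $O(\varepsilon^2\ln^2\varepsilon)$ remainder.

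For the imaginary parts, we invoke Proposition~\ref{prop:perturb_eig2_M}, which supplies $\Im\hat\lambda_{1,\pm}=O(\varepsilon)$ and, via the symmetry cancellation \eqref{eq:imag_lambda2}, the much sharper $\Im\hat\lambda_{2,\pm}=O(\kappa^2\varepsilon)$. Because the leading derivative $\hat\lambda'_{j,\pm}(m\pi)$ is real and of size $-\alpha/(2m\pi)$, an implicit-function argument converts these into $\Im\hat k_m^{(1)}=O(\varepsilon)$ and $\Im\hat k_m^{(2)}=O(\kappa^2\varepsilon)$. The remark after Lemma~\ref{lem:perturb_eig_M} refines $r_{2,h}=O(\varepsilon^2)+iO(\kappa\varepsilon^2)$, so the passage from $\hat k_m^{(2)}$ to $k_m^{(2)}$ contributes at most $O(\kappa\varepsilon^2)$ to the imaginary part; under the hypothesis $\kappa=O(\varepsilon^\rho)$ with $\rho<1/2$ this is dominated by $\kappa^2\varepsilon$, giving $\Im k_m^{(2)}=O(\kappa^2\varepsilon)$. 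The main obstacle is precisely this last point: a naive perturbation bound yields only $\Im k_m^{(2)}=O(\kappa\varepsilon)$, and the sharper rate hinges on simultaneously exploiting the symmetry-driven cancellation \eqref{eq:imag_lambda2} in $\hat{\mathbb{M}}_{\kappa,\pm}$ and the refined structure of $r_2$ and $r_{2,h}$ noted in the remark following Lemma~\ref{lem:perturb_eig_M}, ensuring that no $O(\kappa\varepsilon)$ imaginary contribution survives at this order.
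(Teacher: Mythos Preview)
Your proposal is correct and follows essentially the same approach as the paper: both arguments locate the roots of the explicit eigenvalues $\hat\lambda_{j,\pm}$ via Taylor expansion at $k_{m,0}=m\pi$ and Rouch\'e's theorem, then transfer these to roots of $\lambda_{j,\pm}$ by a second Rouch\'e step on a circle of radius $C\varepsilon^2\ln^2\varepsilon$ using Lemma~\ref{lem:perturb_eig_M}, and finally extract the imaginary-part bounds from the cancellation \eqref{eq:imag_lambda2} together with the hypothesis $\rho<\tfrac12$. The only cosmetic difference is that the paper obtains $\Im k_m^{(2)}=O(\kappa^2\varepsilon)$ directly from the Rouch\'e radius $O(\varepsilon^2\ln^2\varepsilon)=o(\kappa^2\varepsilon)$ rather than invoking the refined form of $r_{2,h}$ from the remark, but both routes are valid.
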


\begin{proof}  With Lemmas \ref{lem:eigen_M_hat} and \ref{lem:perturb_eig_M},  the proof is analogous to that of Theorem~\ref{thm:asym_res_eig}.
First, from Rouche's theorem, there exists a simple root $k_{m}^{(j)}$ of $\lambda_{j,+}(k; \kappa, \varepsilon)$ for odd integer $m$
close to $k_{m,0}:=m\pi$, the root of the leading-order term $\frac{1}{k\tan k}+\frac{1}{k\sin k}$.  

To obtain the asymptotics of $k_{m}^{(j)}$, we first consider a root of $\hat \lambda_{j,+}(k; \kappa, \varepsilon)$, which is an eigenvalue of $\hat{\mathbb{M}}_{\kappa,+}$.
Note  that $\hat \lambda_{j,+}(k; \kappa, \varepsilon)$ attains the expansion \eqref{eq:lambdaj}. 
An application of the Taylor expansion for $\hat \lambda_{j,+}(k, \varepsilon)$ at $k=k_{m,0}$ yields
\begin{eqnarray*}
\hat \lambda_{j,+}(k; \varepsilon) &=& \varepsilon +\bigg[-1/(2k_{m,0}) \cdot \delta k +O(\delta k^2) +
\frac{1}{\pi} \varepsilon \ln \varepsilon + \varepsilon \gamma(k_{m,0}, \kappa)\\
&& + \varepsilon \cdot (-1)^{j+1} 
\cdot \frac{1}{2}\left(\beta^+(k_{m,0},\kappa) + \beta^-(k_{m,0},\kappa)\right) 
 +\varepsilon \cdot O(\delta k)  \bigg] \cdot \alpha + O(\kappa^2\varepsilon),
\end{eqnarray*}
where $\delta k:=k-k_{m,0}$.
In the above, it follows from \eqref{eq:imag_lambda1} and \eqref{eq:imag_lambda2} that
\begin{eqnarray}
&& \Im \, \gamma(k_{m,0}, \kappa) + \frac{1}{2}\left(\Im \, \beta^+(k_{m,0},\kappa) + \Im \, \beta^-(k_{m,0},\kappa)\right)  = O(1) ,   \label{eq:imag_gmma_beta1} \\
&& \Im \, \gamma(k_{m,0}, \kappa) - \frac{1}{2}\left(\Im \, \beta^+(k_{m,0},\kappa) + \Im \, \beta^-(k_{m,0},\kappa)\right)  = O(\kappa^2). \label{eq:imag_gmma_beta2}
\end{eqnarray}
Hence the root of $\hat \lambda_{j,+}$ can be expanded as
\begin{eqnarray*}
\hat k_{m}^{(j)} &=& k_{m, 0}+  2k_{m, 0} \left [ \frac{1}{\pi}\varepsilon \ln\varepsilon + \left(  \frac{1}{\alpha} + \gamma(k_{m,0},\kappa) + (-1)^{j+1} 
\cdot \frac{1}{2}\left(\beta^+(k_{m,0},\kappa) + \beta^-(k_{m,0},\kappa)\right)  \right)\varepsilon \right]  +  k_{m, h}^{(j)},
\end{eqnarray*}
in which  
\begin{eqnarray*}
&& \Re \, k_{m, h}^{(1)} = O(\varepsilon^2\ln^2\varepsilon), \quad \Im \, k_{m, h}^{(1)}= O(\varepsilon^2|\ln\varepsilon|), \\
&& \Re \, k_{m, h}^{(2)} = O(\varepsilon^2\ln^2\varepsilon),  \quad \Im \, k_{m, h}^{(2)} = O(\kappa^2\varepsilon).
\end{eqnarray*}

To obtain the high-order term of the roots, from Lemma~\ref{lem:perturb_eig_M} we have
$$  \lambda_{j,+}(k) - \hat\lambda_{j,+}(k)  \;=\;  O(\varepsilon)  \cdot   \hat\lambda_{j,+}(k)    + O(\varepsilon^2).   $$ 
Therefore, for a certain constant $C$, $| \lambda(k) - \hat\lambda_{j,+}(k)|  < | \hat\lambda_{j,+}(k)|$ holds
for all $k$ satisfying $|k-\hat k_{m}^{(j)}| = C \varepsilon^2\ln^2\varepsilon$,
and the asymptotic expansion of $k_{m}^{(j)}$ follows from Rouche's theorem.
Since $\kappa = O(\varepsilon^\rho)$ with $0<\rho<\frac{1}{2}$, in view of \eqref{eq:imag_gmma_beta1} and \eqref{eq:imag_gmma_beta2}, it follows that 
$\Im \, k_m^{(1)} = O(\varepsilon)$ and $\Im \, k_m^{(2)} = O(\kappa^2 \varepsilon)$. 
Finally, investigating the roots of $\lambda_{j,-}(k; \kappa, \varepsilon)$ yields resonances
close to $m\pi$ with even integers~$m$.
 \end{proof}

\section{Fano resonance and field enhancement}\label{sec:fano_field_enhancement}
We present quantitative analysis for the solution to the scattering problem \eqref{eq:Helmholtz} - \eqref{eq:field2}
and transmission anomaly through the slab near $\kappa=0$ and $k=\Re\,k_m^{(2)}$.
In particular, the expressions of reflected and transmitted field are obtained in Section~\ref{subsec:anomaly}, 
which allow for a rigorous proof of the presence of Fano resonance 
near the frequency $k=\Re\,k_m^{(2)}$. Additionally, we analyze quantitatively the field amplification at Fano resonance in Section~\ref{subsec:amplification}.

\subsection{Asymptotics of the solution to scattering problem}\label{subsec:scattering}

Decompose the system $\mathbb{T}\boldsymbol{\varphi}=\varepsilon^{-1}\mathbf{f}$ into its even and odd subsystems
$$  \mathbb{T}\boldsymbol{\varphi}_\mathrm{even}=\varepsilon^{-1}{\mathbf{f}}_\mathrm{even},  \quad   \mathbb{T}\boldsymbol{\varphi}_\mathrm{odd}=\varepsilon^{-1}{\mathbf{f}}_\mathrm{odd},   $$
in which
$\boldsymbol{\varphi}=\boldsymbol{\varphi}_\mathrm{even}+\boldsymbol{\varphi}_\mathrm{odd}$
and $\mathbf{f}=\mathbf{f}_\mathrm{even}+\mathbf{f}_\mathrm{odd}$, and
\begin{eqnarray*}
&& {\mathbf{f}}_\mathrm{even} = [f^-, \, f^+, \, f^-, \, f^+ ]^T, \quad {\mathbf{f}}_\mathrm{odd} = [f^-, \, f^+, \, -f^-, \, -f^+ ]^T,   \\
&& \boldsymbol{\varphi}_\mathrm{even} = [ \boldsymbol{\varphi}_{+}, \boldsymbol{\varphi}_{+}]^T,  \quad \quad  \quad \quad \;
      \boldsymbol{\varphi}_\mathrm{odd} = [ \boldsymbol{\varphi}_{-}, -\boldsymbol{\varphi}_{-}]^T.
\end{eqnarray*}
These two subsystems are equivalent to the two smaller systems
$$  \mathbb{T}_+\boldsymbol{\varphi}_{+} =  \varepsilon^{-1}\tilde{\mathbf{f}} \quad \mbox{and} \quad  \mathbb{T}_-\boldsymbol{\varphi}_{-} =  \varepsilon^{-1}\tilde{\mathbf{f}},   $$
where $\mathbb{T}_+=\hat{\mathbb{T}}+\tilde{\mathbb{T}}$ and $\mathbb{T}_-=\hat{\mathbb{T}}-\tilde{\mathbb{T}}$,  and $\tilde{\mathbf{f}} = [f^-, \, f^+]^T$.
Define
\begin{equation}\label{eq:mu_pm}
 \mu_+(\kappa) = e^{i\kappa d_0/2} + (1+\eta) e^{-i\kappa d_0/2} , \quad \mu_-(\kappa)=e^{i\kappa d_0/2} - (1+\eta) e^{-i\kappa d_0/2}, 
\end{equation}
where $\eta=O(\kappa)$ is defined in Lemma~\ref{lem:eigen_M_hat}.

\begin{lemma} \label{lem:phi}
The following asymptotic expansion holds for the solutions $\boldsymbol{\varphi}_+$ and  $\boldsymbol{\varphi}_-$ in $V_1\times V_1$:
\begin{eqnarray} 
\left[
\begin{array}{llll}
\langle  \boldsymbol{\varphi}_\pm, \mathbf{e}_1 \rangle  \\
\langle  \boldsymbol{\varphi}_\pm, \mathbf{e}_2 \rangle
\end{array}
\right]  \nonumber
&=& -\left(\alpha+ O(\varepsilon) \right)
\frac{\mu_+ }{2\lambda_{1,\pm}} 
\left(\left[
\begin{array}{cc}
\frac{1}{1+\eta} \\
1 
\end{array}
\right] + O(\varepsilon+\kappa^2) \right) \\
&& +\left(\alpha+ O(\varepsilon) \right)
\frac{\mu_- }{2\lambda_{2,\pm}} 
\left(\left[
\begin{array}{cc}
\frac{1}{1+\eta}  \\
-1 
\end{array}
\right] + O(\varepsilon+\kappa^2) \right), \label{eq:phi_1}
\quad
(|\kappa|,\varepsilon\to0)
\end{eqnarray} 
in which $\alpha:=\langle S^{-1} 1, 1 \rangle$.
In addition,
\begin{equation}\label{eq:phi_2}
\boldsymbol{\varphi}_\pm \;=\; \varepsilon^{-1} \mathbb{L}_{\kappa}^{-1}  \tilde{\mathbf{f}}
- \bigg[  \mathbb{L}_\kappa^{-1}  \mathbf{e}_1 \quad  \mathbb{L}_\kappa^{-1}  \mathbf{e}_2 \bigg]  
\left[
\begin{array}{cc}
 \beta + \tilde\beta&  \beta^- \\
\beta^+ &  \beta + \tilde\beta
 \end{array}
\right]\left[
\begin{array}{llll}
\langle  \boldsymbol{\varphi}_\pm, \mathbf{e}_1 \rangle  \\
\langle  \boldsymbol{\varphi}_\pm, \mathbf{e}_2 \rangle
\end{array}
\right].
\end{equation}
\end{lemma}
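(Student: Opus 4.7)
The plan is to convert each of the two $2\times 2$ operator equations $\mathbb{T}_\pm\boldsymbol{\varphi}_\pm=\varepsilon^{-1}\tilde{\mathbf{f}}$ into a scalar $2\times 2$ matrix equation for the unknown inner products $\langle\boldsymbol{\varphi}_\pm,\mathbf{e}_j\rangle$, invert that matrix spectrally using the eigendecomposition already developed in Lemma~\ref{lem:eigen_M_hat} and Lemma~\ref{lem:perturb_eig_M}, and finally compute the source term $\mathbb{L}_\kappa^{-1}\tilde{\mathbf{f}}$ to leading order from the Neumann series for $\mathbb{L}_\kappa^{-1}$. Formula \eqref{eq:phi_2} will come out as a byproduct of the first algebraic manipulation.

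First I would write $\mathbb{T}_\pm=\mathbb{P}_\kappa+\mathbb{L}_\kappa$ (with the understood sign convention on $\tilde\beta$ and $\tilde S^\infty$), and apply the operator $\mathbb{L}_\kappa^{-1}$, which exists for small $\varepsilon$ by Lemma~\ref{lem:L_inv}. Since
\[
\mathbb{P}_\kappa\boldsymbol{\varphi}_\pm=(\beta\pm\tilde\beta)\langle\boldsymbol{\varphi}_\pm,\mathbf{e}_1\rangle\mathbf{e}_1+\beta^-\langle\boldsymbol{\varphi}_\pm,\mathbf{e}_2\rangle\mathbf{e}_1+\beta^+\langle\boldsymbol{\varphi}_\pm,\mathbf{e}_1\rangle\mathbf{e}_2+(\beta\pm\tilde\beta)\langle\boldsymbol{\varphi}_\pm,\mathbf{e}_2\rangle\mathbf{e}_2,
\]
the identity $\boldsymbol{\varphi}_\pm=\varepsilon^{-1}\mathbb{L}_\kappa^{-1}\tilde{\mathbf{f}}-\mathbb{L}_\kappa^{-1}\mathbb{P}_\kappa\boldsymbol{\varphi}_\pm$ is exactly formula \eqref{eq:phi_2} once one groups $\mathbb{L}_\kappa^{-1}\mathbf{e}_1$ and $\mathbb{L}_\kappa^{-1}\mathbf{e}_2$ as column vectors. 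This disposes of the second part of the lemma.

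Next I would take $L^2$-inner products of the reduced equation with $\mathbf{e}_1$ and $\mathbf{e}_2$. Multiplying through by $\varepsilon$ and using the definition of $\mathbb{M}_{\kappa,\pm}=\varepsilon\tilde{\mathbb{M}}_{\kappa,\pm}$ in \eqref{eq:matrix-M}, this produces the finite linear system
\[
\mathbb{M}_{\kappa,\pm}\begin{bmatrix}\langle\boldsymbol{\varphi}_\pm,\mathbf{e}_1\rangle\\ \langle\boldsymbol{\varphi}_\pm,\mathbf{e}_2\rangle\end{bmatrix}=\begin{bmatrix}\langle\mathbb{L}_\kappa^{-1}\tilde{\mathbf{f}},\mathbf{e}_1\rangle\\ \langle\mathbb{L}_\kappa^{-1}\tilde{\mathbf{f}},\mathbf{e}_2\rangle\end{bmatrix}.
\]
Using $\mathbb{L}_\kappa^{-1}=S^{-1}\mathbb{I}+O(\varepsilon)$ from Lemma~\ref{lem:L_inv}, the Taylor expansion $f^\pm(X)=-e^{\pm i\kappa d_0/2}+O(\kappa\varepsilon)$, and the definition $\alpha=\langle S^{-1}1,1\rangle$, the right-hand side becomes
\[
-\alpha\begin{bmatrix}e^{-i\kappa d_0/2}\\ e^{i\kappa d_0/2}\end{bmatrix}+O(\varepsilon).
\]

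The final step is to invert $\mathbb{M}_{\kappa,\pm}$ by spectral decomposition. By Lemmas~\ref{lem:eigen_M_hat} and \ref{lem:perturb_eig_M}, the eigenvalues of $\mathbb{M}_{\kappa,\pm}$ are $\lambda_{j,\pm}=(1+O(\varepsilon))\hat\lambda_{j,\pm}+O(\varepsilon^2)$, while the right and left eigenvectors satisfy $v_{j,\pm}=[1,(-1)^{j+1}(1+\eta)]^T+O(\kappa^2+\varepsilon)$ and $w_{j,\pm}=[1/2,(-1)^{j+1}/(2(1+\eta))]+O(\kappa^2+\varepsilon)$, normalized biorthogonally. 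Applying $w_{1,\pm}$ and $w_{2,\pm}$ to the right-hand side yields
\[
w_{1,\pm}\cdot\mathrm{RHS}=-\frac{\alpha\mu_+(\kappa)}{2(1+\eta)}+O(\varepsilon+\kappa^2),\qquad w_{2,\pm}\cdot\mathrm{RHS}=\frac{\alpha\mu_-(\kappa)}{2(1+\eta)}+O(\varepsilon+\kappa^2),
\]
directly from the definition \eqref{eq:mu_pm} of $\mu_\pm$. Reconstructing $\mathbf{c}_\pm$ as $\sum_{j}\lambda_{j,\pm}^{-1}(w_{j,\pm}\mathrm{RHS})\,v_{j,\pm}$ and absorbing $1/(1+\eta)$ into the first coordinate of each eigenvector yields the form \eqref{eq:phi_1}.

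The main obstacle is simply careful bookkeeping of the error terms. Near the embedded eigenvalue or resonance, $\lambda_{j,\pm}$ becomes very small, so the relative error bound $O(\varepsilon+\kappa^2)$ inside the bracket in \eqref{eq:phi_1} matters: it comes only from the eigenvectors and from the relative error $O(\varepsilon)$ in the Neumann expansion of $\mathbb{L}_\kappa^{-1}\tilde{\mathbf{f}}$, and not from dividing $O(\varepsilon)$ additive errors by the potentially small $\lambda_{j,\pm}$. Ensuring that the $O(\varepsilon)$ absolute error in the right-hand side is correctly recombined into the $\alpha+O(\varepsilon)$ prefactor (rather than amplified by $1/\lambda_{j,\pm}$) is the only delicate bookkeeping, and it works out because the leading vector $[e^{-i\kappa d_0/2},e^{i\kappa d_0/2}]^T$ is already an $O(1)$ quantity independent of the smallness of $\lambda_{j,\pm}$.
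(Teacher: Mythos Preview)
Your proposal is correct and follows essentially the same approach as the paper's proof: decompose $\mathbb{T}_\pm=\mathbb{P}_\kappa+\mathbb{L}_\kappa$, invert $\mathbb{L}_\kappa$ to obtain \eqref{eq:phi_2} and the finite system $\mathbb{M}_{\kappa,\pm}[\langle\boldsymbol{\varphi}_\pm,\mathbf{e}_j\rangle]=[\langle\mathbb{L}_\kappa^{-1}\tilde{\mathbf{f}},\mathbf{e}_j\rangle]$, then invert $\mathbb{M}_{\kappa,\pm}$ spectrally via Lemmas~\ref{lem:eigen_M_hat} and~\ref{lem:perturb_eig_M} and evaluate the right-hand side using Lemma~\ref{lem:L_inv}. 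The paper writes $\mathbb{M}_{\kappa,+}^{-1}$ out directly as the sum of two rank-one projectors rather than applying left eigenvectors to the source and reconstructing, but this is only a cosmetic difference; your attention to keeping the $O(\varepsilon)$ error multiplicative (so it appears as $\alpha+O(\varepsilon)$ rather than being amplified by $1/\lambda_{j,\pm}$) matches exactly how the paper writes the right-hand side in factored form.
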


\begin{proof} We consider $\mathbb{T}_+\boldsymbol{\varphi}_{+} =  \varepsilon^{-1}\tilde{\mathbf{f}}$, and the proof for  
$\mathbb{T}_-\boldsymbol{\varphi}_{-} =  \varepsilon^{-1}\tilde{\mathbf{f}}$ is parallel.
Using the decomposition $\mathbb{T}_+=\mathbb{P}_\kappa+\mathbb{L}_\kappa$, the equation reads $(\mathbb{P}_\kappa+\mathbb{L}_\kappa)\boldsymbol{\varphi}_{+} = \varepsilon^{-1}\tilde{\mathbf{f}}$, which can be expressed~as
\begin{equation}\label{eq:sol_phi}
\mathbb{L}_\kappa^{-1} \;\mathbb{P}_\kappa \; \boldsymbol{\varphi}_+ + \boldsymbol{\varphi}_+ \;=\; \varepsilon^{-1} \mathbb{L}_\kappa^{-1}  \tilde{\mathbf{f}}. 
\end{equation}
Evaluating $\mathbb{P}_\kappa \; \boldsymbol{\varphi}_+$ explicitly yields \eqref{eq:phi_2}.
By a calculation similar to that in Section~\ref{sec:res_cond}, we obtain
\begin{equation}\label{eq:linear_system}
\mathbb{M}_{\kappa,+} \left[
 \begin{array}{llll}
\langle  \boldsymbol{\varphi}_+, \mathbf{e}_1 \rangle  \\
\langle  \boldsymbol{\varphi}_+, \mathbf{e}_2 \rangle
\end{array}
\right] \;=\;
\left[
\begin{array}{llll}
 \langle \mathbb{L}_\kappa^{-1} \tilde{\mathbf{f}}, \mathbf{e}_1 \rangle   \\
 \langle \mathbb{L}_\kappa^{-1} \tilde{\mathbf{f}}, \mathbf{e}_2 \rangle 
\end{array}
\right].
\end{equation}
Recall that the matrix $\mathbb{M}_{\kappa,+}$ has eigenvalues 
$\lambda_{1,+}(k,\varepsilon)$ and $\lambda_{2,+}(k,\varepsilon)$, which are associated with the eigenvectors $v_{1,+}$ and $v_{2,+}$.
By virtue of Lemmas \ref{lem:eigen_M_hat} and~\ref{lem:perturb_eig_M},
\begin{equation}\label{eq:M_inv}
\mathbb{M}_{\kappa,+}^{-1} = \frac{1}{2\lambda_{1,+}}
\left(\left[
\begin{array}{cc}
1 & \frac{1}{1+\eta} \\
1+\eta & 1 
\end{array}
\right] + O(\varepsilon+\kappa^2) \right)
+\frac{1}{2\lambda_{2,+}}
\left(\left[
\begin{array}{cc}
1 & -\frac{1}{1+\eta} \\
-(1+\eta) & 1 
\end{array}
\right] + O(\varepsilon+\kappa^2) \right).
\end{equation}
On the other hand, note that
$$
\tilde{\mathbf{f}}= [-e^{-i\kappa d_0/2},\; -e^{i\kappa d_0/2} ]^T  + O(\kappa\varepsilon).  $$
Thus it follows from Lemma~\ref{lem:L_inv} that
\begin{equation}\label{eq:L_inv_f}
\left[
\begin{array}{llll}
 \langle \mathbb{L}_\kappa^{-1} \tilde{\mathbf{f}}, \mathbf{e}_1 \rangle   \\
 \langle \mathbb{L}_\kappa^{-1} \tilde{\mathbf{f}}, \mathbf{e}_2 \rangle 
\end{array}
\right] = \left(- \left[
\begin{array}{llll}
e^{-i\kappa d_0/2} \\ 
e^{i\kappa d_0/2} 
\end{array}
\right] + O(\kappa\varepsilon) \right)
 (\alpha + O(\varepsilon)),
\end{equation}
The proof is completed by substituting \eqref{eq:M_inv} and \eqref{eq:L_inv_f} into \eqref{eq:linear_system}.  
\end{proof}

\begin{proposition}\label{prop:phi}
Let $\boldsymbol{\varphi} = [ \varphi_1^-, \varphi_1^+, \varphi_2^-, \varphi_2^+ ]^T$ be the solution of the system  $\mathbb{T}\boldsymbol{\varphi}=\varepsilon^{-1}\mathbf{f}$.
If $0<|\kappa| \ll 1$, then 
$\boldsymbol{\varphi}=[ \boldsymbol{\varphi}_{+}+\boldsymbol{\varphi}_{-}, \; \boldsymbol{\varphi}_{+}-\boldsymbol{\varphi}_{-}]^T$, where
$\boldsymbol{\varphi}_{\pm}$ are given in \eqref{eq:phi_2}.
The following asymptotic expansion holds:
\begin{eqnarray*} 
\left[
\begin{array}{llll}
\langle  \varphi_1^-, 1 \rangle  \\
\langle  \varphi_1^+, 1 \rangle  \\
\langle  \varphi_2^-, 1 \rangle  \\
\langle  \varphi_2^+, 1 \rangle 
\end{array}
\right]  \nonumber
&=& -\mu_+(\kappa)\left(\alpha+ O(\varepsilon+\kappa^2) \right)
\left(
\frac{1}{2\lambda_{1,+}} \left[
\begin{array}{cc}
\frac{1}{1+\eta} \\
1  \\
\frac{1}{1+\eta} \\
1 
\end{array}
\right] 
+\frac{1}{2\lambda_{1,-}}\left[
\begin{array}{cc}
\frac{1}{1+\eta} \\
1  \\
-\frac{1}{1+\eta} \\
-1 
\end{array}
\right]  \right) \\
&& +\;\mu_-(\kappa)\left(\alpha+ O(\varepsilon+\kappa^2) \right)
\left(
\frac{1}{2\lambda_{2,+}} \left[
\begin{array}{cc}
\frac{1}{1+\eta} \\
-1  \\
\frac{1}{1+\eta} \\
-1 
\end{array}
\right] 
+\frac{1}{2\lambda_{2,-}}\left[
\begin{array}{cc}
\frac{1}{1+\eta} \\
-1  \\
-\frac{1}{1+\eta} \\
1 
\end{array}
\right]   \right)
\end{eqnarray*} 
$(|\kappa|,\varepsilon\to0)$, where $\alpha:=\langle S^{-1} 1, 1 \rangle$ and $\mu_\pm(\kappa)$  are defined in \eqref{eq:mu_pm}.
\end{proposition}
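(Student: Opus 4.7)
The plan is to derive the expansion entirely from the even/odd symmetry decomposition set up immediately before Lemma \ref{lem:phi}, by applying that lemma twice---once to $\boldsymbol{\varphi}_+$ and once to $\boldsymbol{\varphi}_-$---and then summing and differencing the resulting inner products. The first identity $\boldsymbol{\varphi}=[\boldsymbol{\varphi}_++\boldsymbol{\varphi}_-,\,\boldsymbol{\varphi}_+-\boldsymbol{\varphi}_-]^T$ is immediate from the very definitions $\boldsymbol{\varphi}_{\mathrm{even}}=[\boldsymbol{\varphi}_+,\boldsymbol{\varphi}_+]^T$ and $\boldsymbol{\varphi}_{\mathrm{odd}}=[\boldsymbol{\varphi}_-,-\boldsymbol{\varphi}_-]^T$ given just above Lemma \ref{lem:phi}. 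In particular, for the scalar averages one has
\begin{equation*}
\langle \varphi_1^\mp,1\rangle \;=\; \langle \boldsymbol{\varphi}_+,\mathbf{e}_{1,2}\rangle \pm \langle \boldsymbol{\varphi}_-,\mathbf{e}_{1,2}\rangle\quad\text{(with the corresponding sign pattern for $\varphi_2^\mp$),}
\end{equation*}
so the task reduces to producing asymptotics for the four scalars $\langle \boldsymbol{\varphi}_\pm,\mathbf{e}_j\rangle$.

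Next I would invoke Lemma \ref{lem:phi} verbatim, which already provides those four scalars as a linear combination of the two rank-one pieces $\mu_+/\lambda_{1,\pm}\cdot[(1+\eta)^{-1},1]^T$ and $\mu_-/\lambda_{2,\pm}\cdot[(1+\eta)^{-1},-1]^T$, each multiplied by the common prefactor $-(\alpha+O(\varepsilon))$ with an $O(\varepsilon+\kappa^2)$ vector error. Substituting these into the four sum/difference formulas above then organizes the result by eigenvalue denominator: the terms sharing $1/\lambda_{1,+}$ contribute only through $\boldsymbol{\varphi}_+$ and receive identical signs in the $\varphi_1$ and $\varphi_2$ slots, giving the vector $[(1+\eta)^{-1},1,(1+\eta)^{-1},1]^T$; the $1/\lambda_{1,-}$ terms come only from $\boldsymbol{\varphi}_-$ and flip sign between the $\varphi_1$ and $\varphi_2$ halves, giving $[(1+\eta)^{-1},1,-(1+\eta)^{-1},-1]^T$; and analogous grouping produces the two $\mu_-$ vectors associated with $\lambda_{2,+}$ and $\lambda_{2,-}$.

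The error terms demand some care but are routine: since Lemma \ref{lem:phi} attaches the $O(\varepsilon+\kappa^2)$ correction to each eigenvector while the scalar prefactor $\alpha+O(\varepsilon)$ is independent of the $\pm$ subsystem, the sum and difference preserve the same remainder form---no cancellation improvement is claimed, so one simply observes that $|\alpha|$, $|1+\eta|^{-1}$ are $O(1)$ and that $\mu_\pm(\kappa)=O(1)$ uniformly in $\kappa$, so additive collation of the two subsystems yields the stated $O(\varepsilon+\kappa^2)$ relative error on each of the four rank-one vectors.

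There is really no substantial obstacle here; the work is purely organizational, since all analytic estimates (invertibility of $\mathbb{L}_\kappa$, expansion of $\hat{\mathbb{M}}_{\kappa,\pm}^{-1}$, perturbation from $\hat{\mathbb{M}}_{\kappa,\pm}$ to $\mathbb{M}_{\kappa,\pm}$, and the expansion of $\mathbb{L}_\kappa^{-1}\tilde{\mathbf{f}}$) have already been packaged into Lemma \ref{lem:phi}. The only place one must be careful is sign-tracking across the four output components and consistency of the subscript $\pm$ in $\lambda_{j,\pm}$ versus the even/odd role of $\boldsymbol{\varphi}_\pm$; verifying the sign pattern by inspection of the four cases $(j,\pm)\in\{1,2\}\times\{+,-\}$ reproduces exactly the four column vectors displayed in the proposition, so the expansion follows.
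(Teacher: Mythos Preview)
Your proposal is correct and is precisely the argument the paper intends: the paper does not even give a separate proof of this proposition, stating it immediately after Lemma~\ref{lem:phi} as a direct consequence of the even/odd decomposition $\boldsymbol{\varphi}=\boldsymbol{\varphi}_{\mathrm{even}}+\boldsymbol{\varphi}_{\mathrm{odd}}$ and the asymptotics of $\langle\boldsymbol{\varphi}_\pm,\mathbf{e}_j\rangle$ already obtained there. Your sign-tracking and handling of the $O(\varepsilon+\kappa^2)$ remainders match the paper's stated form.
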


\subsection{Fano-type transmission anomalies}\label{subsec:anomaly}

Let us consider the field above and below the metallic grating.  Define reflection and transmission coefficients $r^\pm$ and $t^\pm$:
\begin{eqnarray}
&& r^- =  -\frac{\mu_+}{2(1+\eta)} \Lambda_{1,+}
+ \frac{\mu_-}{2(1+\eta)} \Lambda_{2,+} , \quad
 r^+= -\frac{\mu_+}{2} \Lambda_{1,+} - \frac{\mu_-}{2}\Lambda_{2,+}, \label{eq:r_pm} \\
&& t^- = -\frac{\mu_+}{2(1+\eta)}  \Lambda_{1,-}
+ \frac{\mu_-}{2(1+\eta)} \Lambda_{2,-}, \quad
 t^+ = -\frac{\mu_+}{2} \Lambda_{1,-}
- \frac{\mu_-}{2}  \Lambda_{2,-}, \label{eq:t_pm}
\end{eqnarray}
where
\begin{equation}\label{eq:Lambda_pm}
\Lambda_{1,\pm}:=\frac{1}{\lambda_{1,+}} \pm \frac{1}{\lambda_{1,-}}, \quad \Lambda_{2,\pm}:=\frac{1}{\lambda_{2,+}} \pm \frac{1}{\lambda_{2,-}}.
\end{equation}

\begin{lemma}\label{lem:u_far_field}
If $0<|\kappa|\ll 1$, the solution to the scattering problem (\ref{eq:Helmholtz}--\ref{eq:field2}) admits the forms
\begin{align*}
  u_\varepsilon(x)  &=  u^\mathrm{inc} + u^\mathrm{refl}+\varepsilon\left(\alpha+ O(\varepsilon+\kappa^2) \right) 
  \left\{ r^-  \Big[g^\mathrm{e}\big(x,y_1^-\big)+O(\varepsilon)\Big] + r^+ \Big[g^\mathrm{e}\big(x,y_1^+\big)+O(\varepsilon)\Big] \right\} \\
  u_\varepsilon(x)  &= 
    \varepsilon\left(\alpha+ O(\varepsilon+\kappa^2) \right)\left\{ t^- \Big[g^\mathrm{e}\big(x,y_2^-\big)+O(\varepsilon)\Big] 
+ t^+  \Big[g^\mathrm{e}\big(x,y_2^+)\big)+O(\varepsilon)\Big] \right\}
\end{align*}
in $\Omega_1$ and $\Omega_2$ respectively for $x$ far away from the grating, where $y_1^\pm = (\pm d_0/2,1)$ and $y_2^\pm = (\pm d_0/2,0)$ are the centers of the slit apertures $\Gamma^{\pm}_{1,\varepsilon}$ and $\Gamma^{\pm}_{2,\varepsilon}$ respectively. 
\end{lemma}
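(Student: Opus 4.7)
The plan is to start from the Green's representation of $u_\varepsilon$ in Lemma~\ref{eq:u_eps_formula}, which already writes the total field in $\Omega_1$ (resp.\ $\Omega_2$) as boundary integrals of $g^\mathrm{e}(x,y)\,\partial_{y_2}u_\varepsilon(y)$ over the upper (resp.\ lower) pair of slit apertures, plus $u^\mathrm{inc}+u^\mathrm{refl}$ above. For $x$ bounded away from the slab $\{0\le x_2\le 1\}$, the quasi-periodic Green function $y\mapsto g^\mathrm{e}(x,y)$ is smooth on a neighborhood of each aperture $\Gamma_{j,\varepsilon}^\pm$ with derivatives uniformly bounded in $\varepsilon$; a first-order Taylor expansion about the aperture center $y_j^\pm$ therefore yields $g^\mathrm{e}(x,y)=g^\mathrm{e}(x,y_j^\pm)+O(\varepsilon)$ uniformly for $y\in\Gamma_{j,\varepsilon}^\pm$. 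Rescaling $y_1=\varepsilon X\pm d_0/2$ (which contributes the factor $\varepsilon$ from $dy_1=\varepsilon\,dX$) and invoking the definitions of $\varphi_1^\pm$ and $\varphi_2^\pm$ reduces each integral to $\varepsilon[g^\mathrm{e}(x,y_j^\pm)+O(\varepsilon)]\langle\varphi_j^\pm,1\rangle$. The sign comes out correctly in $\Omega_2$ because the extra minus in the definition $\varphi_2^\pm=-\partial_{y_2}u_\varepsilon$ cancels with the leading minus in the second formula of Lemma~\ref{eq:u_eps_formula}.

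The remaining task is to identify $\langle\varphi_j^\pm,1\rangle$ with the scattering amplitudes $r^\pm,t^\pm$ defined in (\ref{eq:r_pm})--(\ref{eq:t_pm}). Proposition~\ref{prop:phi} expresses each of the four mean fluxes as a linear combination of the four vectors attached to the eigenvalues $\lambda_{j,\pm}$ of $\mathbb{M}_{\kappa,\pm}$, weighted by $\mu_\pm/(2\lambda_{j,\pm})$ and an overall factor $\alpha+O(\varepsilon+\kappa^2)$. Because of the even/odd decomposition $\boldsymbol{\varphi}=[\boldsymbol{\varphi}_++\boldsymbol{\varphi}_-,\;\boldsymbol{\varphi}_+-\boldsymbol{\varphi}_-]^T$, the upper-aperture entries combine $1/\lambda_{j,+}+1/\lambda_{j,-}=\Lambda_{j,+}$, while the lower-aperture entries combine $1/\lambda_{j,+}-1/\lambda_{j,-}=\Lambda_{j,-}$. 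The prefactor $1/(1+\eta)$ appears on the rows for the left slit ($\varphi_1^-$ and $\varphi_2^-$) and is absent on the rows for the right slit ($\varphi_1^+$ and $\varphi_2^+$), matching exactly the asymmetry built into (\ref{eq:r_pm})--(\ref{eq:t_pm}). Reading off these combinations gives $\langle\varphi_1^\pm,1\rangle=(\alpha+O(\varepsilon+\kappa^2))\,r^\pm$ and $\langle\varphi_2^\pm,1\rangle=(\alpha+O(\varepsilon+\kappa^2))\,t^\pm$, which when substituted into the reduced boundary integrals produces precisely the two displays in the statement.

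The only step requiring care is the uniformity of the Taylor remainder. Since the singularities of $g^\mathrm{e}(x,\cdot)$ form a periodic array of sources lying inside the slab, the derivatives of $g^\mathrm{e}(x,\cdot)$ on a neighborhood of the apertures are controlled by a constant depending only on the distance from $x$ to the slab, and hence the $O(\varepsilon)$ bound is uniform on any compact subset of $\Omega_1\cup\Omega_2$ at positive distance from $\{0\le x_2\le 1\}$. No deeper difficulty is expected; the essential content of the lemma is that in the far field the scattered wave is generated by four effective point sources located at the aperture centers, whose amplitudes are precisely the reflection and transmission coefficients defined in (\ref{eq:r_pm})--(\ref{eq:t_pm}).
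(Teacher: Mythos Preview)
Your proposal is correct and follows essentially the same approach as the paper: start from the Green's representation in Lemma~\ref{eq:u_eps_formula}, rescale each aperture integral, Taylor-expand $g^\mathrm{e}(x,\cdot)$ about the aperture center to extract the factor $\varepsilon[g^\mathrm{e}(x,y_j^\pm)+O(\varepsilon)]\langle\varphi_j^\pm,1\rangle$, and then invoke Proposition~\ref{prop:phi} to read off $\langle\varphi_1^\pm,1\rangle=(\alpha+O(\varepsilon+\kappa^2))r^\pm$ and $\langle\varphi_2^\pm,1\rangle=(\alpha+O(\varepsilon+\kappa^2))t^\pm$. Your explicit discussion of how the $\Lambda_{j,\pm}$ combinations arise from the even/odd splitting, and of the uniformity of the Taylor remainder, is a useful elaboration that the paper leaves implicit. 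One minor inaccuracy: the singularities of $y\mapsto g^\mathrm{e}(x,y)$ are located at the periodic translates of $x$ and its reflection $x'$, which for $x\in\Omega_1\cup\Omega_2$ do \emph{not} generally lie inside the slab; the correct justification is simply that for $x$ bounded away from the slab boundary and $y$ on an aperture, $|x-y|$ and $|x'-y|$ are bounded below, so $g^\mathrm{e}(x,\cdot)$ is smooth there with derivatives controlled by that distance.
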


\begin{proof} From Lemma~\ref{eq:u_eps_formula}, the diffracted field  
$u_\varepsilon^d(x):=u_\varepsilon(x)-(u^\mathrm{inc}+ u^\mathrm{refl})$ above the grating is 
$$ u_\varepsilon^d(x)= \int_{\Gamma^+_{1,\varepsilon} \cup \Gamma^-_{1,\varepsilon}} g^\mathrm{e}(x,y) \frac{\partial u_\varepsilon(y)}{\partial y_2} ds_y  \quad \mbox{in} \; \Omega_1. $$
Thus for $x$ far away from the grating, in the scaled interval $I$, 
\begin{eqnarray*}
u_\varepsilon^d(x)  &=& \varepsilon\int_{-1/2}^{1/2} g^\mathrm{e}\big(x,(-d_0/2+\varepsilon Y,1)\big) \varphi_1^-(Y) \, dY + \varepsilon\int_{-1/2}^{1/2} g^\mathrm{e}\big(x,(d_0/2+\varepsilon Y,1)\big) \varphi_1^+(Y) \, dY \\
 &=& \varepsilon \left( g^\mathrm{e}\big(x,y_1^-\big)+O(\varepsilon) \right) \cdot  \langle \varphi_1^-, 1 \rangle + \varepsilon \left( g^\mathrm{e}\big(x,y_1^+\big)+O(\varepsilon) \right) \cdot  \langle \varphi_1^+, 1 \rangle.
\end{eqnarray*}
By the asymptotic expansion in Proposition~\ref{prop:phi}, we obtain the desired expansion for $u_\varepsilon(x)$.
The wave field for $x\in\Omega_2$ can be obtained similarly. 
\end{proof}

Now we consider the reflected and transmitted wave above and below the grating.  Decompose the Green function $g^\mathrm{e}(x,y)$
into the propagating and exponentially decaying parts $g^\mathrm{e}(x,y)=g_\mathrm{prop}(x,y) + g_\mathrm{exp}(x,y)$. 
Note that for $(\kappa,k)\in D_1$, only one propagating Fourier mode ($n=0$) appears in the Green function.
By substituting the propagating parts of the Green function into the above lemma, we obtain the expansion of the reflected and transmitted fields as follows.
\begin{proposition}\label{prop:u_rt}
If $0<|\kappa|\ll 1$ and $(\kappa,k)\in D_1$, the reflected and transmitted fields admit the forms
$$ u_\varepsilon^r(x) = R(k,\kappa,\varepsilon) e^{i\kappa x_1 + i\zeta_0(x_2-1)} \quad \mbox{and} \quad u_\varepsilon^t(x) =  T(k,\kappa,\varepsilon) e^{i\kappa x_1 - i\zeta_0x_2}, $$
where the reflection and transmission coefficients are
\begin{eqnarray*}
R(k,\kappa,\varepsilon) &=& 1 + \varepsilon \, \tau \, \big(\alpha+ O(\varepsilon+\kappa^2) \big) \cdot
\left( -\mu_+^2 \, \Lambda_{1,+}  +\mu_-^2  \, \Lambda_{2,+} \right), \\
T(k,\kappa,\varepsilon) &=& \varepsilon \, \tau \, \big(\alpha+ O(\varepsilon+\kappa^2) \big) \cdot
\left( -\mu_+^2 \, \Lambda_{1,-} + \mu_-^2 \, \Lambda_{2,-} \right),
\end{eqnarray*}
$\tau(k,\kappa) =-\frac{i}{{2 d\,\zeta_0\,(1+\eta)}}$,  and $\Lambda_{1,\pm}$ and $\Lambda_{2,\pm}$ are defined in \eqref{eq:Lambda_pm}.
\end{proposition}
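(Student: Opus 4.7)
The plan is to substitute the Rayleigh--Bloch expansion of $g^\mathrm{e}$ into the far-field representation of $u_\varepsilon$ given by Lemma~\ref{lem:u_far_field}, extract the single propagating Fourier mode, and then recognize the resulting combinations of $r^\pm$, $t^\pm$ with the aperture phases $e^{\pm i\kappa d_0/2}$ as the products $\mu_\pm^{2}\,\Lambda_{j,\pm}$ appearing in the statement.

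First I would split $g^\mathrm{e}(x,y) = g_\mathrm{prop}(x,y) + g_\mathrm{exp}(x,y)$. On the diamond $D_1$ only the $n=0$ Fourier mode is propagating, and since the aperture centers lie on the ground planes $\{y_2=1\}$ and $\{y_2=0\}$ the image source in the Neumann Green function coincides with the original source, doubling the propagating amplitude. Explicitly, for $y=(y_1,1)$ and $x\in\Omega_1$,
\[
g^\mathrm{e}_\mathrm{prop}(x,y) \;=\; -\frac{i}{d\,\zeta_0}\, e^{i\kappa(x_1-y_1)+i\zeta_0(x_2-1)},
\]
and analogously for $y=(y_1,0)$ with $x\in\Omega_2$ and $e^{-i\zeta_0 x_2}$ in place of $e^{i\zeta_0(x_2-1)}$. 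Evaluating at the aperture centers $y_1^\pm=(\pm d_0/2,1)$ and $y_2^\pm=(\pm d_0/2,0)$ produces the phase factors $e^{\mp i\kappa d_0/2}$. The evanescent pieces $g^\mathrm{e}_\mathrm{exp}(x,y_i^\pm)$, together with the $O(\varepsilon)$ remainder from the aperture integration in Lemma~\ref{lem:u_far_field}, are of lower order and get absorbed into the $O(\varepsilon+\kappa^{2})$ error already multiplying $\alpha$.

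Inserting the propagating parts into Lemma~\ref{lem:u_far_field} and comparing with the plane-wave ansatz $u_\varepsilon^r = R\,e^{i\kappa x_1 + i\zeta_0(x_2-1)}$, $u_\varepsilon^t = T\,e^{i\kappa x_1 - i\zeta_0 x_2}$ gives
\[
R \;=\; 1 \,-\, \frac{i\,\varepsilon}{d\,\zeta_0}\bigl(\alpha+O(\varepsilon+\kappa^{2})\bigr)\bigl[r^{-}e^{i\kappa d_0/2} + r^{+}e^{-i\kappa d_0/2}\bigr],
\]
and the analogous identity for $T$ with $r^{\pm}$ replaced by $t^{\pm}$ and without the leading $1$. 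The key algebraic step is to use the definitions \eqref{eq:r_pm}--\eqref{eq:t_pm} together with the identity
\[
\frac{e^{i\kappa d_0/2}}{1+\eta} \pm e^{-i\kappa d_0/2} \;=\; \frac{\mu_\pm}{1+\eta},
\]
which is immediate from \eqref{eq:mu_pm}. A short computation then yields
\[
r^{-}e^{i\kappa d_0/2} + r^{+}e^{-i\kappa d_0/2} \;=\; \frac{1}{2(1+\eta)}\bigl[-\mu_+^{2}\,\Lambda_{1,+} + \mu_-^{2}\,\Lambda_{2,+}\bigr],
\]
and the same identity with $\Lambda_{j,-}$ replacing $\Lambda_{j,+}$ for the $t^{\pm}$ sum. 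Absorbing the prefactor $-i/(d\zeta_0)$ into $\tau=-i/(2d\zeta_0(1+\eta))$ delivers the asserted formulas for $R$ and $T$.

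The main technical concern is the error bookkeeping: one must verify that the evanescent contribution $g^\mathrm{e}_\mathrm{exp}$ and the $O(\varepsilon)$ remainder from Lemma~\ref{lem:u_far_field} live at an order that is indeed absorbable into the $\alpha+O(\varepsilon+\kappa^{2})$ factor. This follows because $\Im\,\zeta_n$ is bounded below uniformly for $n\neq 0$ on closed subsets of $D_1$, so $g^\mathrm{e}_\mathrm{exp}(x,y)$ decays exponentially with distance from the grating, while the amplitude factor $\varepsilon(\alpha+O(\varepsilon+\kappa^{2}))$ multiplies both the propagating and evanescent contributions uniformly.
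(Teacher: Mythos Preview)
Your proposal is correct and follows exactly the approach sketched in the paper: decompose $g^\mathrm{e}=g_\mathrm{prop}+g_\mathrm{exp}$, keep only the single propagating mode $n=0$ on $D_1$, insert into Lemma~\ref{lem:u_far_field}, and carry out the algebra with $\mu_\pm$ and $\Lambda_{j,\pm}$. The paper gives only the one-line instruction ``substituting the propagating parts of the Green function into the above lemma''; you have supplied the explicit identity $r^{-}e^{i\kappa d_0/2}+r^{+}e^{-i\kappa d_0/2}=\frac{1}{2(1+\eta)}\bigl[-\mu_+^{2}\Lambda_{1,+}+\mu_-^{2}\Lambda_{2,+}\bigr]$ and the matching one for $t^\pm$, which are exactly the computations the paper leaves implicit. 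One small clarification: the evanescent piece $g^\mathrm{e}_\mathrm{exp}$ need not be ``absorbed'' into the error---the reflected and transmitted fields $u_\varepsilon^r$, $u_\varepsilon^t$ are by definition the propagating plane-wave components of the Rayleigh--Bloch expansion, so $g_\mathrm{exp}$ simply does not contribute to $R$ or $T$ at all.
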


\medskip

\begin{lemma}\label{lem:disk}
For $r>0$ and a horizontal line $\gamma:=\{ t + ir \,;\, t\in\mathbb{R} \}$ in the complex plane, the set $\{ 1/z \,;\, z\in \gamma \}= D\backslash\{0\}$,
where $D=\left\{ z \,;\, |z+\frac{i}{2r}|=\frac{1}{2r}\right\}$ is a disk.
\end{lemma}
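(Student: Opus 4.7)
The plan is to verify the set equality by a direct parametric computation, using the explicit coordinates of points on $\gamma$. First I would write a point on $\gamma$ as $z = t + ir$ with $t\in\mathbb{R}$ and compute
\[
  w \;=\; \frac{1}{z} \;=\; \frac{t-ir}{t^2+r^2},
\]
so that $\Re w = t/(t^2+r^2)$ and $\Im w = -r/(t^2+r^2)$. Then I would compute $w + i/(2r)$ and expand $|w + i/(2r)|^2$ to show it equals $1/(4r)^{2}\cdot 4 = 1/(4r^2)$, after the numerator collapses via $4r^2 t^2 + (t^2-r^2)^2 = (t^2+r^2)^2$. This yields the inclusion $\{1/z : z\in\gamma\}\subseteq D$, and since $w\neq 0$ for every finite $z$, in fact $\{1/z : z\in\gamma\}\subseteq D\setminus\{0\}$.

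For the reverse inclusion, I would use that the map $z\mapsto 1/z$ is an involution on $\mathbb{C}\setminus\{0\}$. Given any $w\in D\setminus\{0\}$, set $z := 1/w$; then the same calculation applied in reverse (or more efficiently, the computation $\Im(1/w) = r$, obtained by expanding $|w+i/(2r)|^2 = 1/(4r^2)$ to read off $\Im w = -r/|w|^{2}$) shows that $z\in\gamma$. Thus the map is a bijection between $\gamma$ and $D\setminus\{0\}$, and the single omitted point $0\in D$ is the image of the point at infinity, which is not in $\gamma$.

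There is no real obstacle here; the only small bookkeeping step is the algebraic identity $4r^2 t^2 + (t^2-r^2)^2 = (t^2+r^2)^2$, which is immediate. A conceptually cleaner alternative, which I would mention in a remark but not use as the main argument, is that $z\mapsto 1/z$ is a Möbius transformation and hence sends generalized circles to generalized circles; the line $\gamma$ passes through $\infty$, so its image passes through $0$ and is an honest Euclidean circle, whose center and radius are then pinned down by evaluating at, say, $z = \pm r + ir$ and $z = ir$.
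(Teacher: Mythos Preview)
Your proof is correct. The forward inclusion follows from the computation you outline, and the reverse inclusion via $\Im(1/w)=r$ is the cleanest way to close the argument; the algebraic identity you flag is indeed the only nontrivial step, and it is immediate.

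The paper itself states this lemma without proof, treating it as a standard fact about the map $z\mapsto 1/z$ (the image of a line not through the origin is a circle through the origin). Your explicit parametric verification is therefore more than the paper supplies, and the M\"obius-transformation remark you propose is precisely the conceptual reason the authors likely regarded the result as elementary. Either argument is adequate; your direct computation has the advantage of being self-contained and giving the center and radius at once.
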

\medskip

We are ready to prove the Fano resonance that occurs in the vicinity of the real resonance frequency $k_*:=\Re \, k_m^{(2)}$ as shown in the transmission graph in Figure~\ref{fig:transmission}. 

\begin{theorem}\label{thm:Fano}
For all $c>0$, define the real interval $I_c:=[k_*-c\kappa^2\varepsilon, k_*+c\kappa^2\varepsilon]$ containing the real resonance frequency~$k_*:=\Re \, k_m^{(2)}$.
There exist a positive number $c$ and frequencies $k_1, k_2 \in I_c$ such that $|T(k_1)| \lesssim \varepsilon$ and $|T(k_2)| \gtrsim 1- \varepsilon$ for $0<|\kappa| \ll1$.
\end{theorem}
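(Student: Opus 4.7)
The strategy is to isolate the rapidly varying (resonant) contribution in the explicit formula for $T(k)$ from Proposition~\ref{prop:u_rt}, show that it parametrizes a circle of diameter $1$ in the complex plane as $k$ varies along the real line (via Lemma~\ref{lem:disk}), and use energy conservation to identify this circle with the ``energy circle'' $\{w:|w+\tfrac12|=\tfrac12\}$; the dip $T=0$ and the peak $T=-1$ are then the two antipodal points on this circle.

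Step 1 (decomposition). Near $k_*=\Re k_m^{(2)}$, the only factor in $T(k)$ that varies rapidly in $k$ is $1/\lambda_{2,+}(k)$: by Theorem~\ref{thm:asym_eig_perturb}, $\lambda_{2,+}$ has a simple zero $k_m^{(2)}$ with $|\Im k_m^{(2)}|=O(\kappa^2\varepsilon)$, so $1/\lambda_{2,+}$ oscillates on the very scale of $I_c$, whereas $1/\lambda_{1,\pm}$ and $1/\lambda_{2,-}$ change by only $O(\kappa^2)$ across $I_c$ (since their poles are at distance $\gtrsim\varepsilon\gg\kappa^2\varepsilon$). Write
\begin{equation*}
  T(k)=T_\mathrm{bg}(k)+B(k),\qquad B(k):=\varepsilon\tau\alpha\,\mu_-^2/\lambda_{2,+}(k),
\end{equation*}
and likewise $R(k)=R_\mathrm{bg}(k)+B(k)$ with the same resonant piece. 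On $I_c$, $T_\mathrm{bg}$ and $R_\mathrm{bg}$ are effectively constant.

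Step 2 (the resonant circle). Using $\lambda_{2,+}(k)=c_2\bigl((k-k_*)+i|\Im k_m^{(2)}|\bigr)+O((k-k_m^{(2)})^2)$ with $c_2=\lambda_{2,+}'(k_m^{(2)})\approx-\alpha/(2m\pi)$, Lemma~\ref{lem:disk} shows that $\{B(k):k\in\mathbb R\}$ is a circle $\mathcal C$ through the origin (the limit $k\to\pm\infty$). Plugging in the leading asymptotics $\tau\sim-i/(2d\zeta_0)$, $\mu_-^2\sim-\kappa^2 d_0^2$, and $|\Im k_m^{(2)}|\sim\kappa^2 d_0^2\varepsilon/d$ (the latter coming from $\Im\bigl(\gamma-\tfrac12(\beta^++\beta^-)\bigr)\approx(\cos(\kappa d_0)-1)/(\zeta_0 d)$ in \eqref{eq:imag_lambda2}), the algebraic factors collapse to give $B(k_*)=-1+O(\varepsilon+\kappa^2)$; hence $\mathcal C$ has diameter $1$ up to the same error, with antipodal points $0$ and $-1$.

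Step 3 (energy conservation pins down the circle). Because $1/\lambda_{1,-}(k_*),1/\lambda_{2,-}(k_*)=O(\varepsilon|\ln\varepsilon|)$, one checks from the formulas in Proposition~\ref{prop:u_rt} that $R(k)-T(k)=1+O(\varepsilon^2|\ln\varepsilon|)$ uniformly on $I_c$. Combined with the exact identity $|R(k)|^2+|T(k)|^2=1$ (only one propagating mode in $D_1$), the leading trajectory of $T(k)$ is forced onto the energy circle $\{|w+\tfrac12|=\tfrac12\}$. Consequently the translate $T_\mathrm{bg}(k_*)+\mathcal C$ coincides with the energy circle at leading order, so the two distinguished values $T=0$ and $T=-1$ are attained by $B(k)=-T_\mathrm{bg}(k_*)$ and $B(k)=-T_\mathrm{bg}(k_*)-1$ respectively, which are antipodal on $\mathcal C$.

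Step 4 (locating $k_1,k_2$). With the parametrization $s:=(k-k_*)/|\Im k_m^{(2)}|$, the arc of $\mathcal C$ swept by $B(k)$ for $k\in I_c$ corresponds to $s\in[-c/C_0,c/C_0]$ with $C_0=|\Im k_m^{(2)}|/(\kappa^2\varepsilon)=O(1)$. Choosing $c$ large enough that this range covers every point of $\mathcal C$ at distance $\gtrsim 1/c$ from the origin, both the dip point $B=-T_\mathrm{bg}(k_*)$ and the peak point $B=-T_\mathrm{bg}(k_*)-1$ are reached, yielding $k_1,k_2\in I_c$. The leading-order bounds $|T(k_j)|=O(\varepsilon+\kappa^2)$ for $j=1$ and $|T(k_2)|\geq 1-O(\varepsilon+\kappa^2)$ are then upgraded using $|T|^2=1-|R|^2$: near the dip $|R(k_1)|$ is $O(\varepsilon)$-close to $1$, giving $|T(k_1)|\lesssim\varepsilon$; symmetrically $|T(k_2)|\gtrsim 1-\varepsilon$.

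The crux and main obstacle is Step 2: one has to verify that the leading coefficients in $|\Im k_m^{(2)}|$, $\mu_-^2$, and $c_2$ conspire to make the diameter of $\mathcal C$ exactly $1$, equivalently $B(k_*)=-1$ at leading order. This is not coincidence: both $\mu_-^2=O(\kappa^2)$ (from the near-vanishing of $\mu_-$ at $\kappa=0$, the symmetry responsible for the embedded eigenvalue) and $\Im k_m^{(2)}=O(\kappa^2\varepsilon)$ (from the same $\cos(\kappa d_0)-1$ factor) arise from the symmetry breaking at $\kappa=0$, and energy conservation forces their ratio precisely so that the resonant trajectory coincides with the energy circle---this matching is what produces the sharp Fano dip and peak.
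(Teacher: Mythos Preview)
Your approach is essentially the paper's: decompose $T$ into a slowly varying background $T_{\mathrm{bg}}$ and the resonant term $B(k)\propto 1/\lambda_{2,+}(k)$, invoke Lemma~\ref{lem:disk} to see that $B$ parametrizes a circle through the origin, combine the relation $R=1+T+O(\varepsilon)$ with energy conservation $|R|^2+|T|^2=1$ to force the trajectory of $T$ onto the energy circle $\{|z+\tfrac12|=\tfrac12\}$, and then argue geometrically that for large $c$ the swept arc passes through both $0$ and~$-1$.

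Two points deserve tightening. In Step~2 you claim a direct verification that $B(k_*)=-1$, but the approximation $\mu_-^2\sim -\kappa^2 d_0^2$ is not justified: the term $\eta=O(\kappa)$ in $\mu_-=e^{i\kappa d_0/2}-(1+\eta)e^{-i\kappa d_0/2}$ is never computed explicitly (neither by you nor by the paper), so the leading coefficient of $\mu_-^2$ is undetermined. The paper does \emph{not} attempt this direct computation; instead it deduces the radius of the resonant circle geometrically from the constraint that a translated arc of $\tilde\gamma_0$ must land on the fixed circle $\gamma_0$---exactly your Step~3. You correctly identify this as the real mechanism in your final paragraph, so treat Step~2 as heuristic and let the proof rest on Step~3.

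More substantively, Step~4 has a gap: as $|s|\le c/C_0$, the arc swept by $B$ on $\mathcal C$ omits a neighborhood of the origin, so to guarantee that both the dip point $-T_{\mathrm{bg}}$ and the peak point $-T_{\mathrm{bg}}-1$ lie on the swept arc you must verify that each is bounded away from~$0$, i.e., that $T_{\mathrm{bg}}\neq 0$ and $T_{\mathrm{bg}}\neq -1$ at leading order. The paper handles this by a direct calculation showing $\Im\,t_1(\tilde k_*)\neq 0$, so that $t_1$ lies off the real axis (and $0,-1$ are the only real points of the energy circle); you should include the analogous check.
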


\medskip

\begin{rem}
We point out that almost total transmission occurs both near the Fano resonance and near the Fabry-Perot resonance.
In this work, we do not address whether the transmission and reflection are in fact total.
Note that total transmission and/or reflection can be induced by symmetries in certain configurations~\cite{bbcn,chesnelnazarov18,chesnelnazarov19,shipmantu}.
\end{rem}

\begin{proof} We give the proof when $m$ is odd, and the argument is analogous if $m$ is even. 
In view of the asymptotic expansions in Theorem~\ref{thm:asym_eig_perturb} and the explicit expression of $\mu_\pm$ in \eqref{eq:mu_pm},
we see that in the $O(\kappa^2\varepsilon)$ neighborhood of $k_*:=\Re \, k_m^{(2)}$,
$$  \frac{\varepsilon\mu_+^2}{\lambda_{1,+}}=O(1), \;   \frac{\varepsilon\mu_-^2}{\lambda_{2,+}}=O(1), \;  \frac{\varepsilon\mu_+^2}{\lambda_{1,-}}=O(\varepsilon), \;  \frac{\varepsilon\mu_-^2}{\lambda_{2,-}}=O(\kappa^2\varepsilon). $$
Therefore
\begin{equation*}
 R(k) = 1+ \varepsilon \, \alpha\tau \cdot
\left( - 
\frac{\mu_+^2}{\lambda_{1,+}} +
\frac{\mu_-^2}{\lambda_{2,+}}   
 \right) +  O(\varepsilon), 
\quad
T(k) =\varepsilon \, \alpha\tau \cdot
\left( -
\frac{\mu_+^2}{\lambda_{1,+}} + 
\frac{\mu_-^2}{\lambda_{2,+}}   
 \right) +  O(\varepsilon),
\end{equation*}
and it follows that $R=1+T+O(\varepsilon)$.
By substituting into the equation for the conservation of energy $|R|^2+|T|^2=1$, we obtain
$|T(k)+1|^2 + |T(k)|^2 \;=\; 1 + O(\varepsilon)$.
This shows that, for fixed $\varepsilon$, the trajectory $\gamma_\varepsilon$ of the transmission coefficient $T(k)$ ($k\in I_c$) on the complex plane lies close to the fixed circular trajectory 
$\gamma_0$ with radius $\frac{1}{2}$ centered at $(-\frac{1}{2},0)$ (cf. Figure~\ref{fig:tran_complx}, right).
Namely,
\begin{equation}\label{eq:gamma}
\gamma_\varepsilon=\gamma_0 +O(\varepsilon),  \quad\mbox{with} \;  \gamma_0 \subset \big\{ \textstyle z \,;\, |z+\frac{1}{2}|=\frac{1}{2}\big\}.
\end{equation}
In fact, the assertion of the theorem holds as long as 
\begin{equation}\label{claim}
[0, \pi ] \; \mbox{or} \; [\pi, 2\pi] \subset \left\{ \arg \left(z+\frac{1}{2}\right);  \; z \in \gamma_0 \right\}.
\end{equation}

\begin{figure}
\begin{center}
\hspace*{-1.5cm}
\includegraphics[height=5.5cm]{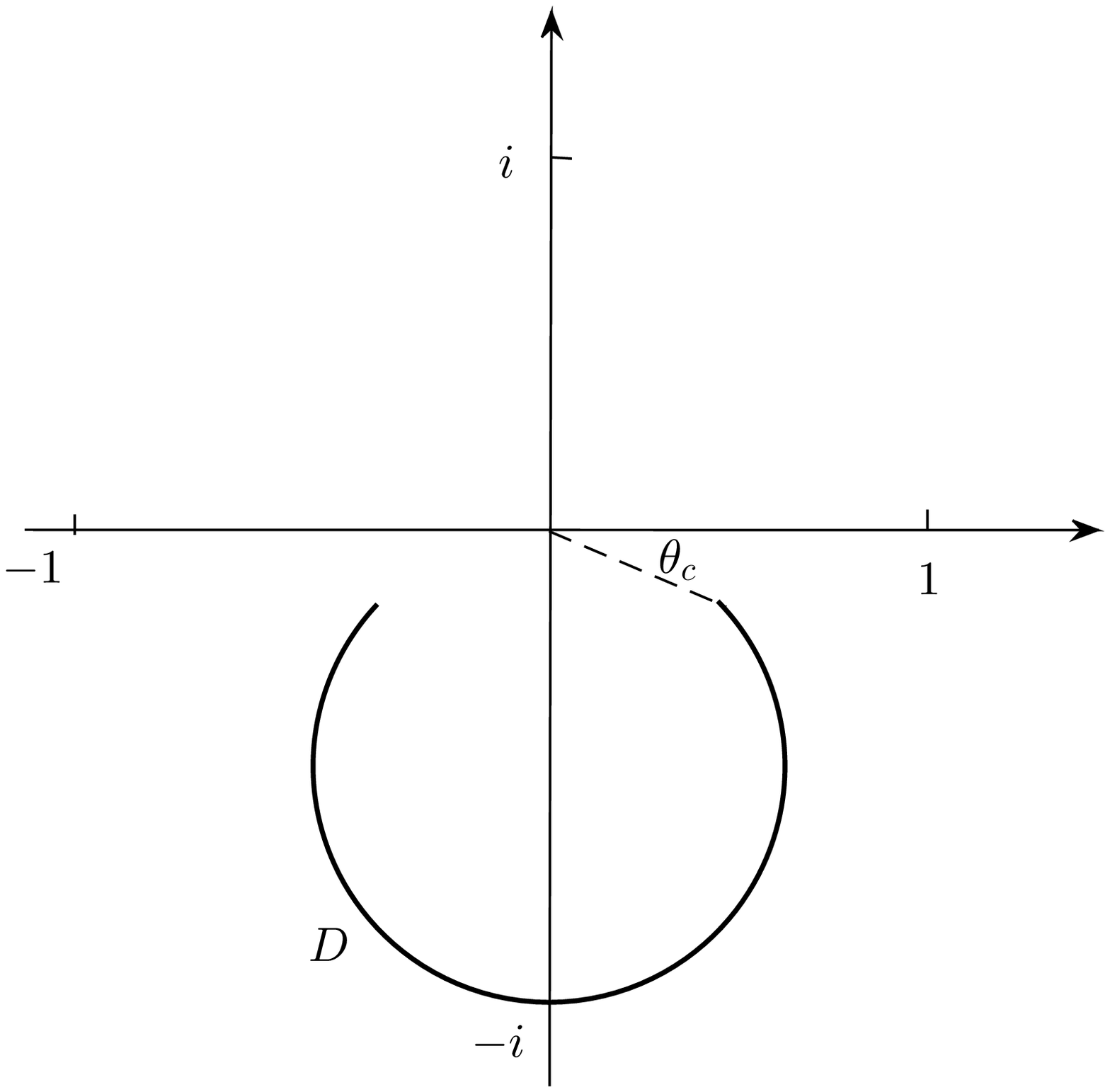}
\hspace*{-2cm}
\includegraphics[height=5.5cm]{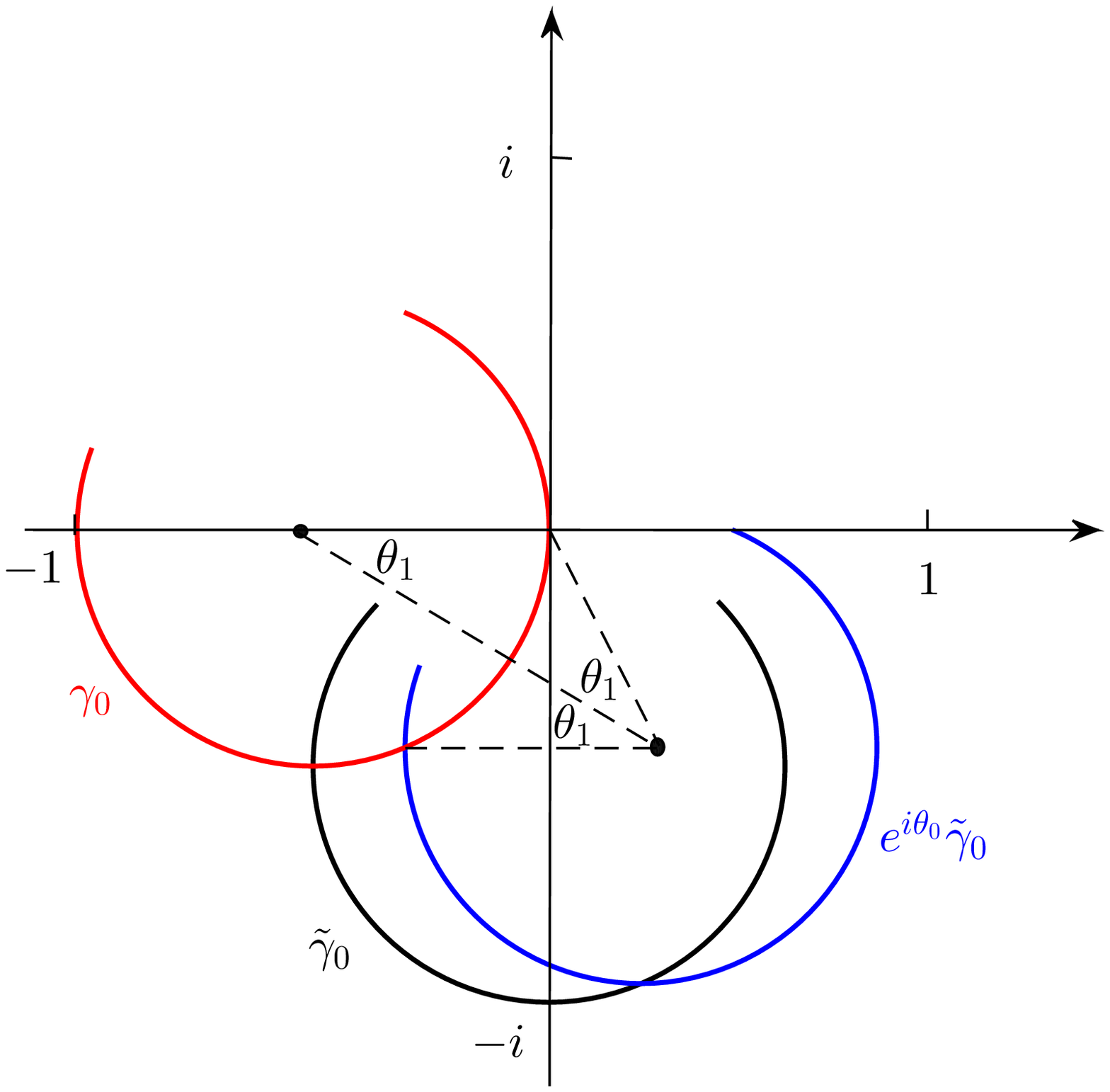}
\caption{Left: The curve $\tilde \gamma_0$ lies on the circle $\big\{ z \,;\, |z+\frac{i}{2}|=\frac{1}{2}\big\}$. Right: The curves $\gamma_0$, $\tilde\gamma_0$ and $e^{i \theta_0}\tilde\gamma_0$.  $\gamma_0$ lies on the circle $\big\{ z \,;\, |z+\frac{1}{2}|=\frac{1}{2}\big\}$.}\label{fig:tran_complx}
\end{center}
\end{figure}
To show this, write $T(k)= t_1(k)  + t_2(k)  +  O(\varepsilon)$, where
\begin{equation}\label{eq:transmission1}
 t_1(k) = - \alpha\tau(k)  \frac{\varepsilon \mu_+^2(k)}{\lambda_{1,+}(k)}, \quad t_2(k)= \alpha \tau(k)  \frac{ \varepsilon \mu_-^2(k)}{\lambda_{2,+}(k)}.
\end{equation}
From a perturbation argument parallel to the proof of Theorem~\ref{thm:asym_eig_perturb}, it is known that $\Re \, \lambda_{2,+}(k)$ attains a root $\tilde k_*$
in the vicinity of $k_*$. Since $\lambda_{2,+}(k_*)=O(\kappa^2\varepsilon)$, we deduce that  $|\tilde k_*-k_*|=O(\kappa^2\varepsilon)$ and $\Im \, \lambda_{2,+}(\tilde k_*)=O(\kappa^2\varepsilon)$. 
Now expand all the terms of \eqref{eq:transmission1} in the $O(\kappa^2\varepsilon)$ neighborhood of $\tilde k_*$:
\begin{eqnarray*}
&&  \tau(k)=\tau(\tilde k_*) + O(\kappa^2\varepsilon), \quad \mu_\pm(k) = \mu_\pm(\tilde k_*) + O(\kappa^2\varepsilon), \\
&&  \lambda_{1,+}(k) = \lambda_{1,+}(\tilde k_*) + O(\kappa^2\varepsilon), \quad \lambda_{2,+}(k) = c_1 (k-\tilde k_*) + i \, c_2 \kappa^2\varepsilon + O(\kappa^2\varepsilon^2),
\end{eqnarray*}
where $c_1$ and $c_2$ are real-valued constants and $c_2>0$.  By setting $k-\tilde k_*=s \cdot \kappa^2\varepsilon$, it follows that
\begin{equation*}
T(k) = t_1 (\tilde k_*)+ \alpha \tau(\tilde k_*) \frac{ \varepsilon \mu_-^2(\tilde k_*)}{ c_1 (k-\tilde k_*) + i \, c_2 \kappa^2\varepsilon}  +  O(\varepsilon)
=  t_1 (\tilde k_*)+  \frac{e^{i \theta_0}}{ \hat c_1 s + i \, \hat c_2 }  +  O(\varepsilon),
\end{equation*}
in which
$$c_0:= \alpha \tau(\tilde k_*)  \frac{\mu_-^2(\tilde k_*)}{\kappa^2}=O(1), \quad  \hat c_1=\frac{c_1}{|c_0|}, \quad  \hat c_2=\frac{c_2}{|c_0|}, \quad  \theta_0= \arg c_0.$$
 From Lemma~\ref{lem:disk}, we deduce that 
the trajectory of $T(k)$ for $k \in I_c$ is given by
\begin{equation}\label{eq:transmission2}
\gamma_\varepsilon= t_1 (\tilde k_*) + e^{i \theta_0} \tilde\gamma_0 +O(\varepsilon), \quad \mbox{where}  \; \tilde\gamma_0 \subset \big\{ \textstyle z \,;\, |z+\frac{i}{2\hat c_2}|=\frac{1}{2\hat c_2}\big\}.
\end{equation}
In addition,
$\big[\pi+\theta_c,2\pi-\theta_c\big]  \subset \big\{ \arg z; \; z \in \tilde\gamma_0 \big\}$
for certain $\theta_c\in(0,\pi/2)$ depending on the constant $c$  (see Figure~\ref{fig:tran_complx}, left).

A combination of \eqref{eq:gamma} and \eqref{eq:transmission2} leads to the relation
$ \gamma_0 = t_1 (\tilde k_*) + e^{i \theta_0} \tilde\gamma_0$. 
Geometrically, $\gamma_0$ is obtained from a rotation and translation of the curve $\tilde\gamma_0$ as shown in Figure~\ref{fig:tran_complx} (right),
and it follows that $\hat c_2=1$.
A direct calculation shows that $|\Im\, t_1 (\tilde k_*)|/|\Re\, t_1 (\tilde k_*)|=O(1)$ and is nonzero. By letting
$\theta_1 : =\tan^{-1}(|\Im\, t_1 (\tilde k_*)|/ |\Re\, t_1 (\tilde k_*)|)$ 
and choosing sufficiently large $c$ such that $\theta_c<\theta_1$ and $\theta_c<\frac{\pi}{2}-\theta_1$, the claim 
\eqref{claim} holds and the proof is complete.
\end{proof}

\begin{rem}
From the proof, $\gamma_0$ is the trajectory for the leading-order term of the transmission coefficient on the complex plane, and 
using \eqref{claim}, 
the graph $\{ |z|; \; z \in \gamma_0 \}$ demonstrates an asymmetric line shape with respect to the frequency for $k \in I_c$.
\end{rem}

\subsection{Field enhancement}\label{subsec:amplification}
Fano resonance is usually associated with field amplification around the resonance frequencies \cite{abeynanda, fan, shipman13}.
This also applies to the periodic structure considered here.
See Figure~\ref{fig:u_slit} for a plot of the field inside the slits at Fano resonance frequencies. 

We investigate the field enhancement at frequencies around the real part $\Re\, k_m^{(2)}$ of a complex resonance that is the perturbation of a real eigenvalue.
It is known that the field amplification at Fabry-Perot resonance frequencies $\Re \, k_m^{(1)}$  is of order $O(1/
\varepsilon)$ \cite{lin_zhang18_1}. 
As shown below, field amplification with an order of $O(1/(\kappa\varepsilon))$ occurs at the Fano resonance frequencies $\Re \, k_m^{(2)}$, which is much stronger than that of Fabry-Perot resonance. This results in more complicated scattering behavior, as the field enhancement depends on both small $\varepsilon$ and $\kappa$.

\begin{figure}[!htbp]
\begin{center}
\includegraphics[height=2.5cm]{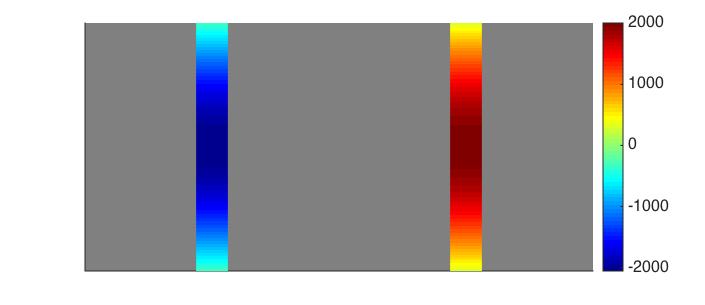} \hspace*{-20pt}
\includegraphics[height=2.5cm]{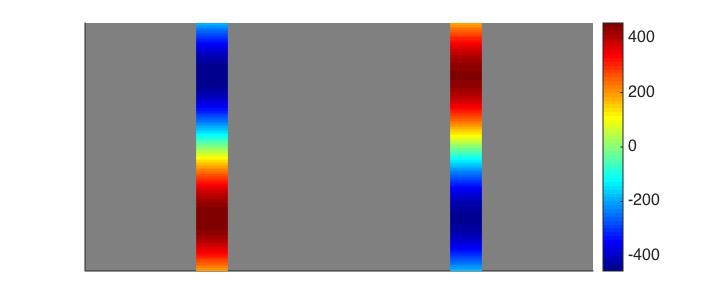}
\vspace*{-15pt}
\caption{The wave field inside the slits $S_{\varepsilon}^{0,\pm}$ at the first two  Fano resonance frequencies $\Re \, k_1^{(2)}$ and $\Re \, k_2^{(2)}$. $d=1$, $d_0=0.4$, $\varepsilon=0.05$, $\kappa=0.1$. }\label{fig:u_slit}
\end{center}
\end{figure}

In what follows, for clarity we focus on the field amplification inside the slit only. The same amplification order
can be obtained near the slit aperturs and in the far-field following the method in \cite{lin_zhang18_1}, here we skip the calculations for brevity.
Since $u_\varepsilon$ is quasi-periodic, we analyze the field in the reference slit $S_{\varepsilon}^{(0)}$.  

\begin{lemma}\label{lem:u_slit}
In the slit region $\tilde S_{\varepsilon}^{0,\pm} :=\{ x\in S_{\varepsilon}^{0,\pm} \;;\; x_2 \gg \varepsilon , 1- x_2 \gg \varepsilon \} $, the solution $u_\varepsilon(x)$ of the scattering problem (\ref{eq:Helmholtz}--\ref{eq:field2}) admits the following expansion
$$
u_\varepsilon(x) \;=\; -\frac{\alpha+ O(\varepsilon+\kappa^2)}{k\sin k} \Big( r^\pm \cos kx_2 + t^\pm \cos k(1-x_2) \Big) + O\left(e^{-1/\varepsilon} \right),
$$
where the coefficients $r^\pm$ and $t^\pm$ are given in \eqref{eq:r_pm} and \eqref{eq:t_pm}.
\end{lemma}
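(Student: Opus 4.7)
The starting point is the representation formula of Lemma~\ref{eq:u_eps_formula}: for $x\in S_\varepsilon^{0,\pm}$,
\begin{equation*}
u_\varepsilon(x) \;=\; -\!\int_{\Gamma^{\pm}_{1,\varepsilon}} g_\varepsilon^{\mathrm{i},\pm}(x,y)\,\frac{\partial u_\varepsilon(y)}{\partial y_2}\,ds_y
+\int_{\Gamma^{\pm}_{2,\varepsilon}} g_\varepsilon^{\mathrm{i},\pm}(x,y)\,\frac{\partial u_\varepsilon(y)}{\partial y_2}\,ds_y.
\end{equation*}
The plan is to evaluate the interior Green's function asymptotically for $x\in\tilde S_\varepsilon^{0,\pm}$ and $y$ on an aperture, then recognize the resulting scalar factors $\langle\varphi_i^\pm,1\rangle$ via Proposition~\ref{prop:phi}.

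The first step is to split the eigenfunction expansion $g_\varepsilon^{\mathrm{i},0}(x,y)=\sum_{m,n\ge0}c_{mn}\phi_{mn}(x)\phi_{mn}(y)$ into the $m=0$ sector and the $m\ge 1$ sector. The $m=0$ sum is $y_1$-independent and factors through the one-dimensional Neumann Green's function $G_{1D}(x_2,y_2)$ of $\partial_2^2+k^2$ on $[0,1]$; explicitly, for $y_2=1$ one obtains $G_{1D}(x_2,1)=\cos(kx_2)/(k\sin k)$, and for $y_2=0$ one obtains $G_{1D}(x_2,0)=\cos(k(1-x_2))/(k\sin k)$. This yields, to $m=0$ accuracy,
\begin{equation*}
g_\varepsilon^{\mathrm{i},\pm}(x,y)\Big|_{y_2=1} \;=\; \frac{1}{\varepsilon}\,\frac{\cos(kx_2)}{k\sin k}, \qquad
g_\varepsilon^{\mathrm{i},\pm}(x,y)\Big|_{y_2=0} \;=\; \frac{1}{\varepsilon}\,\frac{\cos(k(1-x_2))}{k\sin k}.
\end{equation*}

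The second step is to estimate the $m\ge 1$ contribution. For each $m\ge1$, $c_{mn}$ behaves like $-1/[(m\pi/\varepsilon)^2+(n\pi)^2]$, and summing in $n$ produces a one-dimensional \emph{modified} Green's function with parameter $\mu_m=m\pi/\varepsilon$. On the top aperture this gives $\cosh(\mu_m x_2)/(\mu_m\sinh\mu_m)$ and on the bottom aperture $\cosh(\mu_m(1-x_2))/(\mu_m\sinh\mu_m)$. In the region $\tilde S_\varepsilon^{0,\pm}$, where $x_2$ and $1-x_2$ are both $\gg\varepsilon$, both expressions are bounded by $O(e^{-c/\varepsilon})$ uniformly in $m\ge 1$; summing the geometric tail then gives a total $m\ge 1$ contribution of order $e^{-1/\varepsilon}$.

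Combining these two steps, after rescaling $y_1=\varepsilon Y\pm d_0/2$ (so that $ds_y=\varepsilon\,dY$ cancels the $1/\varepsilon$ in the Green's function), the two boundary integrals reduce to
\begin{equation*}
\int_{\Gamma^{\pm}_{1,\varepsilon}} g_\varepsilon^{\mathrm{i},\pm}(x,y)\,\tfrac{\partial u_\varepsilon}{\partial y_2}\,ds_y
=\frac{\cos(kx_2)}{k\sin k}\,\langle\varphi_1^\pm,1\rangle+O(e^{-1/\varepsilon}),
\end{equation*}
and an analogous identity with $\cos(k(1-x_2))$ and $-\langle\varphi_2^\pm,1\rangle$ on the lower aperture (the minus sign coming from the definition $\varphi_2^\pm=-\partial u_\varepsilon/\partial y_2|_{x_2=0}$). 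Inserting into the representation formula yields
\begin{equation*}
u_\varepsilon(x) \;=\; -\frac{1}{k\sin k}\Big[\langle\varphi_1^\pm,1\rangle\cos(kx_2)+\langle\varphi_2^\pm,1\rangle\cos(k(1-x_2))\Big]+O(e^{-1/\varepsilon}).
\end{equation*}

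The final step is to read off the scalar coefficients from Proposition~\ref{prop:phi}. A direct regrouping of the four-vector expansion there, using the definitions $\Lambda_{1,\pm}=1/\lambda_{1,+}\pm 1/\lambda_{1,-}$ and $\Lambda_{2,\pm}=1/\lambda_{2,+}\pm 1/\lambda_{2,-}$ from \eqref{eq:Lambda_pm} together with \eqref{eq:r_pm}--\eqref{eq:t_pm}, identifies
\begin{equation*}
\langle\varphi_1^\pm,1\rangle=\big(\alpha+O(\varepsilon+\kappa^2)\big)r^\pm,\qquad
\langle\varphi_2^\pm,1\rangle=\big(\alpha+O(\varepsilon+\kappa^2)\big)t^\pm,
\end{equation*}
which combined with the previous display gives the stated formula.

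The main technical obstacle is the second step: showing that the $m\ge 1$ sector of the modal Green's function contributes only $O(e^{-1/\varepsilon})$ inside $\tilde S_\varepsilon^{0,\pm}$ uniformly in $m$. This requires a careful uniform bound on the one-dimensional modified Green's function $\cosh(\mu_m\cdot)/(\mu_m\sinh\mu_m)$ away from the endpoints, together with summability of the resulting series in $m$; the oscillating $\cos\bigl(m\pi(x_1\mp d_0/2+\varepsilon/2)/\varepsilon\bigr)$ factor in $\phi_{mn}(x)$ does not help the estimate but also does not hurt it, since we bound in absolute value. The matching step with Proposition~\ref{prop:phi} is purely algebraic and presents no difficulty.
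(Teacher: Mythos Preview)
Your argument is correct and arrives at the same formula, but it takes a route dual to the paper's. The paper does not invoke the Green's function representation of Lemma~\ref{eq:u_eps_formula} at all: instead it writes $u_\varepsilon$ directly as a slit-waveguide mode series
\[
u_\varepsilon(x)=a_0^\pm\cos kx_2+b_0^\pm\cos k(1-x_2)+\sum_{m\ge1}\bigl(a_m^\pm e^{-k_{2,\varepsilon}^{(m)}x_2}+b_m^\pm e^{-k_{2,\varepsilon}^{(m)}(1-x_2)}\bigr)\cos\frac{m\pi\tilde x_{1,\varepsilon}^\pm}{\varepsilon},
\]
differentiates in $x_2$, and integrates over the apertures to read off $-a_0^\pm k\sin k=\langle\varphi_1^\pm,1\rangle$ and $b_0^\pm k\sin k=-\langle\varphi_2^\pm,1\rangle$; the higher coefficients $a_m^\pm,b_m^\pm$ are then bounded by $O(1/\sqrt m)$ via Proposition~\ref{prop:phi}, and the evanescent exponentials give the $O(e^{-1/\varepsilon})$ remainder. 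You instead expand the interior Green's function modally and isolate the same $m=0$ piece as the closed-form one-dimensional Neumann Green's function. The two are equivalent---your modal splitting of $g_\varepsilon^{\mathrm{i},\pm}$ is exactly what generates the paper's modal expansion of $u_\varepsilon$ once the representation formula is applied---but the paper's version is slightly more economical since it never needs the explicit Green's function, while yours makes the origin of the $\cos(kx_2)/(k\sin k)$ factor more transparent.

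One point to tighten in your $m\ge1$ estimate: you bound the kernel $\cosh(\mu_m\cdot)/(\mu_m\sinh\mu_m)$ pointwise, but the actual tail term still carries the pairing $\langle\varphi_1^\pm,\cos(m\pi(\cdot+\tfrac12))\rangle$, and $\varphi_1^\pm\in\tilde H^{-1/2}(I)$ can be as large as $O(1/(\kappa\varepsilon))$ near resonance. The exponential decay $e^{-m\pi(1-x_2)/\varepsilon}$ overwhelms both the polynomial growth in $m$ of that pairing and the large norm of $\varphi_1^\pm$, so the conclusion stands; it is simply worth stating explicitly. The paper faces the identical issue and handles it by the $O(1/\sqrt m)$ bound on $a_m^\pm,b_m^\pm$.
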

 \begin{proof}
The field $u_\varepsilon$ satisfies the Helmholtz equation in $S_{\varepsilon}^{0,\pm}$ with homogeneous Neumann boundary conditions on the slit walls, and thus it admits the expansion
\begin{equation*}
u_\varepsilon(x) \;=\; a_{0}^\pm \cos kx_2 + b_0^\pm \cos k(1-x_2) + \sum_{m\geq 1} \left(a_m^\pm e^{-k_{2,\varepsilon}^{(m)}x_2} +  b_m^\pm e^{-k_{2,\varepsilon}^{(m)}(1-x_2)} \right) \cos \frac{m\pi \tilde x_{1,\varepsilon}^\pm}{\varepsilon},
\end{equation*}
where $\tilde x_{1,\varepsilon}^\pm = x_1 \mp d_0/2 + \varepsilon/2$ and $k_{2,\varepsilon}^{(m)}=\sqrt{(m\pi/\varepsilon)^2-k^2}$.
Taking the derivative of the above expansion with respect to $x_2$ and integrating over the slit apertures yields
\begin{equation*}
- a_0^\pm k\sin k = \frac{1}{\varepsilon} \int_{\Gamma^\pm_{1,\varepsilon}} \frac{\partial u_\varepsilon}{\partial x_2} ds 
= \langle \varphi_1^\pm, 1 \rangle, \quad
b_0^\pm k \sin k = \frac{1}{\varepsilon} \int_{\Gamma^\pm_{2,\varepsilon}} \frac{\partial u_\varepsilon}{\partial x_2} ds 
= -\langle \varphi_2^\pm, 1 \rangle.
 \end{equation*}
Applying Proposition~\ref{prop:phi}, we obtain the expansion coefficients $a_0^\pm$ and $b_0^\pm$ as follows:
\begin{equation}\label{a0b0}
a_0^\pm  = -\frac{r^\pm (\alpha+ O(\varepsilon+\kappa^2))}{k\sin k},  \quad
b_0^\pm  = -\frac{t^\pm (\alpha+ O(\varepsilon+\kappa^2))}{k\sin k}.
\end{equation}
For $m\ge1$, the coefficients $a_m$ and $b_m$ can be obtained similarly by taking the inner product of 
$\partial_{x_2} u$ with $\cos \frac{m\pi \tilde x_1^\pm }{\varepsilon}$ over the slit apertures. In view of Propsosition
\ref{prop:phi}, a direct estimate leads to
\begin{equation}\label{ambm}
\abs{a_m} \le O(1/\sqrt{m}) , \quad \abs{b_m} \le O(1/\sqrt{m}), \quad \mbox{for}\; \,m\ge1.
\end{equation}
The proof is complete.  
\end{proof}

Now the shape of resonant wave modes in the slits  and their enhancement orders at the Fano resonance frequency $k=\Re \, k_m^{(2)}$
are characterized in the following theorem.

\begin{theorem}\label{thm:u_slit_res}
In the slit region $\tilde S_{\varepsilon}^{0,\pm} :=\{ x\in S_{\varepsilon}^{0,\pm} \;;\; x_2 \gg \varepsilon , 1- x_2 \gg \varepsilon \} $, the solution $u_\varepsilon(x)$ of the scattering problem (\ref{eq:Helmholtz}--\ref{eq:field2}) admits the following asymptotic form at the resonant frequencies $k= \Re \, k_m^{(2)}$.
\begin{align*}
   u_\varepsilon(x) &=\; \left[\pm \frac{c_\mathrm{odd}}{\kappa\varepsilon} + O\left(\frac{1}{\varepsilon}\right) \right] \cos (k(x_2-1/2)) + O(1) ,  \quad
 (k=\Re \, k_m^{(2)}, \; m\;\text{even})  \\
   u_\varepsilon(x) &=\; \left[\pm \frac{c_\mathrm{even}}{\kappa\varepsilon} + O\left(\frac{1}{\varepsilon}\right) \right] \sin (k(x_2-1/2)) + O(1),  \quad
 (k=\Re \, k_m^{(2)}, \; m\;\text{odd})
\end{align*}
($|\kappa|,\varepsilon\to0$), in which $c_\mathrm{odd}$ and $c_\mathrm{even}$ are certain constants independent of $\varepsilon$ and $\kappa$.
\end{theorem}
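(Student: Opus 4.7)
The plan is to recast the slit-field formula of Lemma~\ref{lem:u_slit} in the basis adapted to the mirror symmetry about $x_2=1/2$ and then identify which of $\cos(k(x_2-1/2))$ and $\sin(k(x_2-1/2))$ carries the Fano amplification.

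Using the sum-to-product identities together with $\sin k=2\sin(k/2)\cos(k/2)$, Lemma~\ref{lem:u_slit} becomes
\begin{equation*}
u_\varepsilon(x)=-\frac{\alpha+O(\varepsilon+\kappa^2)}{2k}\left[\frac{r^\pm+t^\pm}{\sin(k/2)}\cos(k(x_2-1/2))-\frac{r^\pm-t^\pm}{\cos(k/2)}\sin(k(x_2-1/2))\right]+O(e^{-1/\varepsilon})
\end{equation*}
in $\tilde S_\varepsilon^{0,\pm}$. The algebraic identities $\Lambda_{j,+}+\Lambda_{j,-}=2/\lambda_{j,+}$ and $\Lambda_{j,+}-\Lambda_{j,-}=2/\lambda_{j,-}$ applied to the definitions~\eqref{eq:r_pm}--\eqref{eq:t_pm} reduce $r^\pm+t^\pm$ to a combination of $\mu_+/\lambda_{1,+}$ and $\mu_-/\lambda_{2,+}$, and $r^\pm-t^\pm$ to $\mu_+/\lambda_{1,-}$ and $\mu_-/\lambda_{2,-}$. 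Combining Theorem~\ref{thm:asym_eig_perturb} with a short Taylor expansion of the leading-order factors $(\cos k\pm 1)/(k\sin k)$ of $\lambda_{j,\pm}$ about $k=m\pi$ shows that, at $k=\Re\,k_m^{(2)}$, exactly one of $\lambda_{2,+},\lambda_{2,-}$ is purely imaginary of order $\kappa^2\varepsilon$ (the one whose sign matches the parity of $m$), the matching $\lambda_{1,\pm}$ is real of order $\varepsilon$ (its value $2\alpha\hat\beta(m\pi)\varepsilon$ is obtained as in the proof of Theorem~\ref{thm:asym_eig_perturb}), and the opposite-parity $\lambda_{1,\mp},\lambda_{2,\mp}$ are both of order $1/(\varepsilon|\ln\varepsilon|)$ because the complementary factor $(\cos k\mp 1)/(k\sin k)$ has only a simple pole at $m\pi$. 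Since $\mu_+=2+O(\kappa)$ and $\mu_-=O(\kappa)$, the unique term of order $1/(\kappa\varepsilon)$ in $r^\pm\pm t^\pm$ is the resonant $\mu_-/\lambda_{2,\pm}$, while every other term is at most $O(1/\varepsilon)$.

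Finally, at $k=m\pi+O(\varepsilon|\ln\varepsilon|)$, exactly one of $\sin(k/2)$ and $\cos(k/2)$ equals $\pm1+O(\varepsilon|\ln\varepsilon|)$ and the other is $O(\varepsilon|\ln\varepsilon|)$, with the roles flipped between odd and even $m$. Pairing the singular $\mu_-/\lambda_{2,\pm}$ numerator with the $O(1)$ denominator concentrates the $1/(\kappa\varepsilon)$ amplification on the matching symmetry component $\cos(k(x_2-1/2))$ or $\sin(k(x_2-1/2))$ prescribed by the theorem; on the complementary shape the $O(\varepsilon|\ln\varepsilon|)$ numerator cancels against the $O(\varepsilon|\ln\varepsilon|)$ denominator to leave an $O(1)$ contribution. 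The two-slit $\pm$ sign is inherited from the odd-in-$x_1$ eigenvector $\hat v_{2,\pm}=[1,-(1+\eta)]^T$ of Lemma~\ref{lem:eigen_M_hat}, which enters $r^+,t^+$ with opposite sign to $r^-,t^-$.

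The main technical obstacle is a careful bookkeeping of sub-leading terms so that the asserted error $O(1/\varepsilon)$ genuinely holds. The $O(\varepsilon+\kappa^2)$ remainder of Proposition~\ref{prop:phi} multiplies the $O(1/(\kappa\varepsilon))$ resonant amplitude to produce corrections of sizes $O(1/\kappa)$ and $O(\kappa/\varepsilon)$; under the standing assumption $\kappa=O(\varepsilon^\rho)$ with $0<\rho<1/2$ these reduce to $O(\varepsilon^{-\rho})$ and $O(\varepsilon^{\rho-1})$, both comfortably inside $O(1/\varepsilon)$. The $O(\varepsilon|\ln\varepsilon|)$ perturbations of the trigonometric factors likewise combine with the resonant amplitude to $O(|\ln\varepsilon|/\kappa)$, again within $O(1/\varepsilon)$ for the same range of $\rho$.
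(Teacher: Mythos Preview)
Your proposal is correct and follows essentially the same route as the paper: start from Lemma~\ref{lem:u_slit}, split $r^\pm\cos kx_2+t^\pm\cos k(1-x_2)$ into its even and odd parts about $x_2=1/2$, observe that $r^\pm+t^\pm$ and $r^\pm-t^\pm$ collapse respectively to combinations of $\mu_\pm/\lambda_{j,+}$ and $\mu_\pm/\lambda_{j,-}$, and then read off the orders $\lambda_{2,\pm}=O(\kappa^2\varepsilon)$, $\lambda_{1,\pm}=O(\varepsilon)$, $\lambda_{j,\mp}=O(1/(\varepsilon|\ln\varepsilon|))$ at $k=\Re\,k_m^{(2)}$ from Theorem~\ref{thm:asym_eig_perturb} and Lemmas~\ref{lem:eigen_M_hat}--\ref{lem:perturb_eig_M}. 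The only cosmetic differences are that you apply the half-angle identity for $\sin k$ at the outset rather than leaving the expressions in the form $(\cos kx_2\pm\cos k(1-x_2))/\sin k$ as the paper does, and that you include an explicit check that the $O(\varepsilon+\kappa^2)$ remainder from Proposition~\ref{prop:phi} stays inside $O(1/\varepsilon)$ under the hypothesis $\kappa=O(\varepsilon^\rho)$; one minor slip is calling $\lambda_{1,\pm}$ ``real'' at the resonance (it has a nonzero $O(\varepsilon)$ imaginary part, cf.\ Remark~\ref{rmk:imag_lambda}), but this is immaterial to the order estimate.
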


\begin{proof} We only perform the calculations when $m$ is odd, and the calculations for even $m$ are similar.
From Lemma~\ref{lem:u_slit} and the explicit expressions (\ref{eq:r_pm}--\ref{eq:t_pm}) for the coefficients $r^\pm$ and $t^\pm$, we obtain that in the regions $S_{\varepsilon}^{0,-}$
and $ S_{\varepsilon}^{0,+}$,
\begin{eqnarray}
u_\varepsilon(x) &=& -\frac{\alpha+ O(\varepsilon+\kappa^2)}{k\sin k} \Big( r^- \cos kx_2 + t^- \cos k(1-x_2) \Big) + O\left(e^{-1/\varepsilon} \right), \nonumber \\
&=&
 \Big(\alpha+ O(\varepsilon+\kappa^2)\Big)
\left(\frac{\mu_+}{\lambda_{1,+}}  - \frac{\mu_-}{\lambda_{2,+}} \right) \frac{\cos kx_2 +\cos k(1-x_2)}{2(1+\eta) k\sin k} \label{eq:u_slit_res-} \\
 && +
 \Big(\alpha+ O(\varepsilon+\kappa^2)\Big)
\left(\frac{\mu_+}{\lambda_{1,-}}  - \frac{\mu_-}{\lambda_{2,-}} \right) \frac{\cos kx_2 -\cos k(1-x_2)}{2(1+\eta) k\sin k}
+ O\left(e^{-1/\varepsilon} \right), \nonumber 
\end{eqnarray}
\begin{eqnarray}
u_\varepsilon(x) &=& -\frac{\alpha+ O(\varepsilon+\kappa^2)}{k\sin k} \Big( r^+ \cos kx_2 + t^+ \cos k(1-x_2) \Big) + O\left(e^{-1/\varepsilon} \right), \nonumber \\
&=&
 \Big(\alpha+ O(\varepsilon+\kappa^2)\Big)
\left(\frac{\mu_+}{\lambda_{1,+}}  + \frac{\mu_-}{\lambda_{2,+}} \right) \frac{\cos kx_2 +\cos k(1-x_2)}{2k\sin k} \label{eq:u_slit_res+} \\
 && +
 \Big(\alpha+ O(\varepsilon+\kappa^2)\Big)
\left(\frac{\mu_+}{\lambda_{1,-}}  + \frac{\mu_-}{\lambda_{2,-}} \right) \frac{\cos kx_2 -\cos k(1-x_2)}{2k\sin k}
+ O\left(e^{-1/\varepsilon} \right)\nonumber 
\end{eqnarray}
respectively.
From the asymptotic expansions in Theorem~\ref{thm:asym_eig_perturb} and the defintion of $\mu_\pm$ in \eqref{eq:mu_pm},
we see that at resonant frequencies $k= \Re \, k_m^{(2)}$,
\begin{equation}\label{eq:mu_over_lambda12_asy}
\frac{1}{\lambda_{1,+}}=O\left(\frac{1}{\varepsilon}\right), \quad \frac{1}{\lambda_{2,+}} = O\left(\frac{1}{\kappa^2 \varepsilon}\right)
\quad \mbox{and} \quad  \mu_+ = 1+ O(\kappa), \quad \mu_-  = O(\kappa).
\end{equation}
On the other hand, in view of Lemmas \ref{lem:eigen_M_hat} and \ref{lem:perturb_eig_M}, 
\begin{equation}\label{eq:lambda_12_asy}
\lambda_{1,-}=  \frac{ (\cos k-1) \alpha }{k \sin k } + O(\varepsilon) 
\quad \mbox{and} \quad
\lambda_{2,-}=  \frac{ (\cos k-1) \alpha }{k \sin k } + O(\varepsilon).
\end{equation}
We obtain the desired expansions by substituting \eqref{eq:mu_over_lambda12_asy}--\eqref{eq:lambda_12_asy} into \eqref{eq:u_slit_res-}--\eqref{eq:u_slit_res+}. 
\end{proof}

\appendix \section{Proof of Lemmas \ref{lem:S-1} and \ref{lem:S-2}}

\noindent\textbf{Proof of Lemmas \ref{lem:S-1}} \quad
Recall that the kernel of $S$ is given by \eqref{eq:rho}.
Note that $\ln\abs{\sin \left(\frac{\pi t}{2}\right)}$ is a periodic function with period $2$.
Setting $\tilde X=-X$ and $\tilde Y=-Y$, it follows that $\rho(X,Y)=\rho(\tilde X, \tilde Y)$.
If $\tilde{\varphi}(X)=\varphi(-X)$, then
\begin{equation}\label{eq:equality_S}
[S \tilde\varphi](X) = \int_{-\frac 12}^{\frac 12} \rho(X,Y) \varphi(-Y) \, dY = \int_{-\frac 12}^{\frac 12}  \rho(\tilde X, \tilde Y) \varphi(\tilde Y) \, d\tilde Y = [S \varphi](\tilde X).
\end{equation}
The kernels of $S^{\infty}_0$ and $\tilde S^{\infty}$ are given by (cf.~\eqref{eq:rho_infty}--\eqref{eq:rho_infty_tilde} and \eqref{eq:re_kappa0})
\begin{eqnarray*}
 \rho_{\infty}(0; X,Y) &=&  \hat r_\mathrm{e}(|X-Y|) + r_{\mathrm{i},1}(\varepsilon; |X-Y|) + r_{\mathrm{i},2}(\varepsilon;|X+Y+1|),  \\
 \tilde\rho_{\infty}(X,Y) &=&  \tilde r_{\mathrm{i},1}(\varepsilon; |X-Y|) + \tilde r_{\mathrm{i},2}(\varepsilon;|X+Y+1|). 
\end{eqnarray*}
If $\tilde{\varphi}(X)=\varphi(-X)$,
in view of the periodicity of the functions $r_{\mathrm{i},2}(t)$ and $\tilde r_{\mathrm{i},2}(t)$ (cf.~\eqref{eq:r12}),  a parallel 
derivation as in \eqref{eq:equality_S} yields
\begin{equation}\label{eq:equality_S_infty}
[S_0^\infty \tilde\varphi](X) = [S_0^\infty \varphi](\tilde X) \quad \mbox{and} \quad  [\tilde S^\infty \tilde\varphi](X) = [\tilde S^\infty \varphi](\tilde X).
\end{equation}
Finally, to show $[S^{\infty,+}_0\tilde{\varphi}](X) = [S^{\infty,-}_0\varphi](-X)$, by \eqref{eq:re_kappa0}
the kernel of $S^{\infty,\pm}_0$ is given by $\hat\rho( |\pm d_0+\varepsilon(X-Y)|)$ for the real-valued function $\hat\rho$. Hence,
\begin{eqnarray*}
[S^{\infty,+}_0\tilde{\varphi}](X)  &=& \int_{-\frac 12}^{\frac 12} \hat\rho(|d_0+\varepsilon(X-Y)|) \varphi(-Y) \, dY  \\
&=& \int_{-\frac 12}^{\frac 12} \hat\rho(|-d_0+\varepsilon(\tilde X-\tilde Y)|) \varphi(\tilde Y) \, d\tilde Y = [S^{\infty,-}_0\varphi](\tilde X).
\end{eqnarray*}

\noindent\textbf{Proof of Lemmas \ref{lem:S-2}} \quad
The proof of (1) can be found in \cite{eric10}. The invertibility of the operator 
$S+S^{\infty}_\kappa+\tilde S^{\infty}$
is evident from (1) and the fact that $\| S_\kappa^{\infty}\|  \lesssim \varepsilon$ and $ \| \tilde S^{\infty}\|  \lesssim e^{-1/\varepsilon} $.
Let $\varphi$ and $\hat\varphi$ satisfy 
$(S+S^{\infty}_0+\tilde S^{\infty})\varphi = g$ and $(S+S^{\infty}_0+\tilde S^{\infty})\hat{\varphi}=\tilde{g}$,
in which $\tilde{g}(X)=g(-X)$.
A combination of \eqref{eq:equality_S} and \eqref{eq:equality_S_infty} leads to 
$$ [(S+S^{\infty}_0+\tilde S^{\infty})\tilde{\varphi}](X) = [(S+S^{\infty}_0+\tilde S^{\infty})\varphi](\tilde X) = g(-X). $$
The assertion (3) holds by the uniqueness of the solution to the integral equation.

\bibliography{references}

\end{document}